\tikzset{square matrix/.style={
    matrix of nodes,
    column sep=-\pgflinewidth, row sep=-\pgflinewidth,
    nodes={draw,
      minimum height=15pt,
      anchor=center,
      text width=13pt,
      align=center,
      inner sep=0pt
    },
  },
  square matrix/.default=2cm
}
\def\eref#1{$(\ref{#1})$}
\def\lref#1{Lemma~$\ref{#1}$}
\def\tref#1{Theorem~$\ref{#1}$}
\def\fref#1{Figure~$\ref{#1}$}
\def\cyref#1{Corollary~$\ref{#1}$}
\def\sref#1{Section~$\ref{#1}$}
\def\tabref#1{Table~$\ref{#1}$}
\def\Z{\mathbb{Z}}
\def\G{\mathscr{G}}
\def\cov{\mathscr{C}}
\def\eps{\varepsilon}
\def\Urcs{\mathcal{U}_\mathrm{RCS}}
\def\Urc{\mathcal{U}_\mathrm{RC}}
\def\Urs{\mathcal{U}_\mathrm{RS}}
\def\Ucs{\mathcal{U}_\mathrm{CS}}
\def\Ur{\mathcal{U}_\mathrm{R}}
\def\Uc{\mathcal{U}_\mathrm{C}}
\def\Us{\mathcal{U}_\mathrm{S}}
\def\Vr{\mathcal{V}_\mathrm{R}} % Used in one proof
\def\Vc{\mathcal{V}_\mathrm{C}} %
\def\Vs{\mathcal{V}_\mathrm{S}} %
\newcommand{\unkn}{\small\bullet}
\newcommand{\pmax}{p_{\text{\normalfont max}}}
\newcommand{\qmin}{q_{\text{\normalfont min}}}
\renewcommand{\geq}{\geqslant}
\renewcommand{\leq}{\leqslant}
\renewcommand{\ge}{\geqslant}
\renewcommand{\le}{\leqslant}
\newtheorem{theo}{Theorem}
\newtheorem{lemm}{Lemma}
\newtheorem{corol}{Corollary}
\newtheorem{conj}{Conjecture}
\title{Covers and partial transversals of Latin squares\thanks{The authors thank Adel Kazemi for bringing the idea of studying covers to their attention.  Best was supported by Endeavour Postgraduate Scholarship and the NSERC CGS-D. Stones was supported by her NSFC Research Fellowship for International Young Scientists (grant number: 11550110491) and the Thousand Youth Talents Plan in Tianjin. Wanless' research was supported by ARC grant DP150100506. Corresponding authors' email addresses: \texttt{trent.marbach@gmail.com} (T.\ Marbach) and \texttt{rebecca.stones82@gmail.com} (R.\ J.\ Stones)}
}
\author[1]{Darcy~Best}
\author[1]{Trent~Marbach}
\author[2]{Rebecca~J.~Stones}
\author[1]{Ian~M.~Wanless}
\affil[1]{\small School of Mathematical Sciences, Monash University, Australia
}
\affil[2]{\small Nankai-Baidu Joint Laboratory, College of Computer and Control Engineering \& College of Software, Nankai University, China
}
\begin{document}

\maketitle

\begin{abstract}
We define a {\em cover} of a Latin square to be a set of entries that
includes at least one representative of each row, column and symbol. A
cover is {\em minimal} if it does not contain any smaller cover.  A
partial transversal is a set of entries that includes at most one
representative of each row, column and symbol. A partial transversal
is {\em maximal} if it is not contained in any larger partial
transversal.
We explore the relationship between covers and partial transversals.

We prove the following:
(1)~The minimum size of a cover in a Latin square of order $n$
is $n+a$ if and only if the maximum size of a
partial transversal is either $n-2a$ or $n-2a+1$.  
(2)~A minimal cover in a Latin square of order $n$ has size at most 
$\mu_n=3(n+1/2-\sqrt{n+1/4})$.  
(3)~There are infinitely many orders $n$ for which there exists a 
Latin square having a minimal cover of every size from $n$ to $\mu_n$. 
(4)~Every Latin square of order $n$ has a minimal cover of a size
which is asymptotically equal to $\mu_n$.
(5)~If $1\le k\le n/2$ and $n\ge5$ then there is a Latin square of order 
$n$ with a maximal partial transversal of size $n-k$.
(6)~For any $\eps>0$, asymptotically almost all Latin squares have
no maximal partial transversal of size less than $n-n^{2/3+\eps}$.
%We introduce covers of Latin squares, which we define as sets of entries that include at least one representative in each row, column and symbol.  We explore the relationship between covers and (the actively studied) partial transversals, which are sets of entries that include at most one representative in each row, column and symbol.  For example, we show that the minimum size of a cover of a Latin square $L$ of order $n$ is $n+a$ if and only if the maximum size of a partial transversal of $L$ is either $n-2a$ or $n-2a+1$.  We show a minimal cover of $L$, i.e., one where deleting an entry makes it no longer a cover, has size at most $\mu=3(n+1/2 - \sqrt{n+1/4})$.  When $n=t^2+t$ and $t \geq 2$, we show there is some Latin square of order $n$ which has minimal covers of all theoretically possible sizes, namely $n$ through $\mu=3t^3$. Moreover, for any fixed $\eps>0$, we show every Latin square of order $n$ has a minimal cover of size $3n-O(n^{1/2+\eps})$, which is asymptotic to $\mu$.
\end{abstract}

\section{Introduction}

% Darcy change list:
% - Remove l_{a,b}
% - Change (?,?,?) to (\unkn,\unkn,\unkn) for "unknown"
% - 'induced' now fully means 'ignoring isolated vertices'

A \emph{Latin square} of order $n$ is an $n \times n$ matrix
containing $n$ symbols such that each row and each column contains one copy of each symbol. Unless otherwise specified, we use $\Z_n$ as the symbol set and also use $\mathbb{Z}_n$ to index the rows and columns. Where convenient (such as when embedding a Latin square inside a larger one), we consider $\Z_n$ to be the set of integers $\{0,\dots,n-1\}$ rather than a set of congruence classes. For a Latin square $L=[L_{ij}]$, we define $E(L)=\{(i,j,L_{ij}) : i,j \in \mathbb{Z}_n\}$ to be the set of \emph{entries}. The set of all entries in a row, all entries in a column or all entries containing a given symbol is called a \emph{line}. In particular, a Latin square of order $n$ contains exactly $3n$ lines. We say a line $\ell$ is \emph{represented} by an entry $\mathbf{e}$ whenever $\mathbf{e} \in \ell$.  For a set of entries $\cov \subseteq E(L)$, we say $\ell$ is \emph{represented} by $\cov$ whenever $|\cov \cap \ell| \geq 1$, and we say it is represented $|\cov \cap \ell|$ times by $\cov$.  If $\cov \cap \ell=\{\mathbf{e}\}$, we say that $\ell$ is \emph{uniquely represented} by $\mathbf{e}$.  We define a $c$-\emph{cover} as a $c$-subset of $E(L)$ in which every line is represented.  In order for a Latin square of order $n$ to have a $c$-cover, we must have $c \geq n$.

A \emph{partial transversal} of deficit $d$ is an $(n-d)$-subset of $E(L)$ in which every line is represented at most once.  Since an entry $(r,c,s)$ in a partial transversal uniquely represents three lines (its row, column and symbol), a partial transversal of deficit $d$ represents exactly $3(n-d)$ lines.  A \emph{transversal} is a partial transversal of deficit $0$.  \fref{fi:examples} gives examples of a partial transversal, a transversal and a cover.

In a Latin square $L$, we say a cover $\cov$ of $L$ is \emph{minimal} if, for all $\mathbf{e} \in \cov$, the set $\cov \setminus \{\mathbf{e}\}$ is not a cover.  If $\cov$ is not minimal, then it has a \emph{redundant} entry $\mathbf{e} \in \cov$ for which $\cov \setminus \{\mathbf{e}\}$ is also a cover.  We also say $\cov$ is \emph{minimum} if every cover of $L$ has size at least $|\cov|$.  We say a partial transversal $T$ of $L$ is \emph{maximal} if, for all $\mathbf{e} \in E(L) \setminus T$, the set $T \cup \{\mathbf{e}\}$ is not a partial transversal. We stress that the maximality of a partial transversal $T$ is always relative to the whole Latin square $L$, even when we locate $T$ inside some proper subset of $E(L)$.

\begin{figure}[htp]
\centering
\begin{tikzpicture}
\matrix[square matrix]{
|[fill=blue!50]| 0 & 1 & 2 & 3 \\
3 & |[fill=blue!50]| 2 & 1 & 0 \\
2 & 3 & 0 & |[fill=blue!50]| 1 \\
1 & 0 & 3 & 2 \\
};
\end{tikzpicture}
\quad
\begin{tikzpicture}
\matrix[square matrix]{
|[fill=blue!50]| 0 & 1 & 2 & 3 \\
3 & 2 & |[fill=blue!50]| 1 & 0 \\
2 & |[fill=blue!50]| 3 & 0 & 1 \\
1 & 0 & 3 & |[fill=blue!50]| 2 \\
};
\end{tikzpicture}
\quad
\begin{tikzpicture}
\matrix[square matrix]{
|[fill=blue!50]| 0 & 1 & 2 & |[fill=blue!50]| 3 \\
3 & |[fill=blue!50]| 2 & 1 & 0 \\
2 & 3 & 0 & |[fill=blue!50]| 1 \\
1 & 0 & |[fill=blue!50]| 3 & 2 \\
};
\end{tikzpicture}
\caption{\label{fi:examples}A Latin square of order $n=4$ where we highlight a partial transversal of deficit $1$ (left), a transversal (middle), and an $(n+1)$-cover (right).}
\end{figure}

There are many tantalising open questions regarding transversals \cite{transurv}. One of the more famous problems is Brualdi's Conjecture, which asserts that every Latin square possesses a \emph{near transversal}, that is, a partial transversal of deficit $1$. The current best result in this direction is due to Shor and Hatami \cite{HS08} who showed that every Latin square has a partial transversal of deficit $O(\log^2n)$. There are a great many Latin squares that do not possess transversals \cite{CW17}, although no such example of odd order is known.  In fact, Ryser \cite{Ryser} conjectured that there is no transversal-free Latin square of odd order.  In this paper, we introduce the notion of {\em covers} with the primary aim of using them to facilitate the study of transversals.

%Not much prior work seems to have been done on covers of Latin squares. Heinrich \cite{Hei83} considered a related concept to ours. [[Chopped cos I couldn't be bothered explaining]]

Pippenger and Spencer \cite{PS89} showed a very powerful and general result that includes covers of Latin squares as a special case. They showed that as $n\rightarrow\infty$, the entries of a Latin square of order $n$ can be decomposed into $n-o(n)$ covers. In particular, this means that all Latin squares have a cover of size $n+o(n)$. A better upper bound on the size of the smallest cover is given in \cyref{cy:allLSsmallcov}.

A Latin square $L$ of order $n$ is equivalent to a tripartite $3$-uniform hypergraph with $n$ vertices in each part (corresponding respectively to rows, columns and symbols) and $n^2$ hyperedges (corresponding to the entries of $L$). In this framework, a cover of $L$ is precisely an edge cover (a set of hyperedges whose union covers the vertex set) of this hypergraph.  Alternatively, $L$ can be considered as an $n$-uniform hypergraph of order $n^2$ with edges that are precisely the $3n$ lines of $L$; a cover of $L$ is precisely a vertex cover (a set of vertices that intersects every edge) of this hypergraph.  This relationship with hypergraph covers is one justification for our choice of terminology.

Another reason for our terminology is the connection to covering codes. Writing each entry of $L$ as a codeword of length 3 over an alphabet of size $n$, we obtain a maximal distance separable code (the connection between Latin squares and MDS codes was described by McWilliams and Sloane \cite{McS} as ``one of the most fascinating chapters in all of coding theory'').  By extending the alphabet to $Q=\mathbb{Z}_n \cup \{\infty\}$, a cover $\cov \subseteq E(L)$ of a Latin square $L$ of order $n$ is, in the sense of \cite{AF03}, a special case of a $2$-cover in $Q^3$  of the set $\{(i,\infty,\infty) : i \in \mathbb{Z}_n\} \cup \{(\infty,j,\infty) : j \in \mathbb{Z}_n\} \cup \{(\infty,\infty,k) : k \in \mathbb{Z}_n\}$.  It has the additional property that $\cov$ is contained in $E(L)$ for some Latin square $L$, which implies codewords in $\cov$ do not contain $\infty$.  %  (although we later loosen this restriction to partial Latin squares when we study potential covers)

% In fact, our covers are 1-covers in the sense of \cite{AF03} [[Hmmm, but so is any dominating set...]] [[becky: it's worse than that: elements in $1$-covers $\cov$ in \cite{AF03} do not need to lie in what's being covered $X$; they're just two subsets of $Q^n$.  This means $\cov=\{(i,i,i)\}_{i \in \mathbb{Z}_n}$ is a $1$-cover of $X=E(L)$ for every Latin square $L$ of order $n$.]].  [[becky: I tried to get something from this, but it's a bit forced]]

A Latin square $L$ of order $n$ also has a natural representation as an $n^2$-vertex graph, called a \emph{Latin square graph} which we denote $\Gamma_L$, with vertex set $E(L)$ and an edge between two distinct entries whenever they share a row, column or symbol.  An example is shown in \fref{fi:LRgraph}.  The graph $\Gamma_L$ is thus the union of $3n$ cliques of size $n$ (one for each line), and a cover in $L$ is equivalent to a selection of vertices in $\Gamma_L$ in which each of these cliques has at least one representative.  A cover of $L$ does not necessarily map to a vertex cover of $\Gamma_L$ (the example in \fref{fi:LRgraph} is not a vertex cover of $\Gamma_L$).

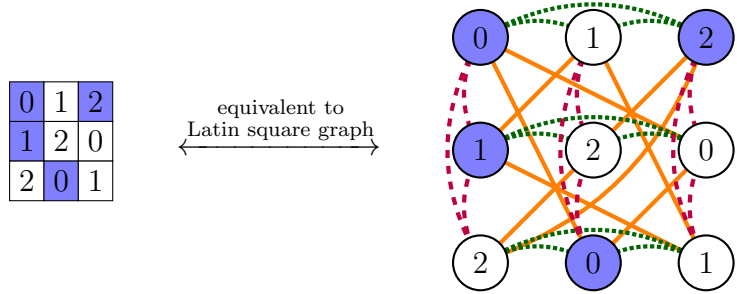
\begin{figure}[htp]
\centering
$\begin{array}{c}
\begin{tikzpicture}
\matrix[square matrix]{
|[fill=blue!50]| 0 & |[fill=white]| 1 & |[fill=blue!50]| 2 \\
|[fill=blue!50]| 1 & |[fill=white]| 2 & |[fill=white]| 0 \\
|[fill=white]| 2 & |[fill=blue!50]| 0 & |[fill=white]| 1 \\
};
\end{tikzpicture}
\end{array}
\quad
\xleftrightarrow{\substack{\text{equivalent to} \\ \text{Latin square graph}}}
\quad
\begin{array}{c}
\begin{tikzpicture}[scale=1.5]

\coordinate (02) at (0,2);
\coordinate (01) at (0,1);
\coordinate (00) at (0,0);
\coordinate (12) at (1,2);
\coordinate (11) at (1,1);
\coordinate (10) at (1,0);
\coordinate (22) at (2,2);
\coordinate (21) at (2,1);
\coordinate (20) at (2,0);

\draw[ultra thick, orange] (20) to (12) to (01);
\draw[ultra thick, orange] (20) to (01);
\draw[ultra thick, orange] (21) to (10) to (02);
\draw[ultra thick, orange] (21) to (02);
\draw[ultra thick, orange] (22) to (11) to (00);
\draw[ultra thick, orange] (22) to[bend left=25] (00);

\draw[ultra thick, green!40!black, densely dotted] (02) to[bend left=30] (12) to[bend left=30] (22);
\draw[ultra thick, green!40!black, densely dotted] (02) to[bend left=30] (22);
\draw[ultra thick, green!40!black, densely dotted] (01) to[bend left=30] (11) to[bend left=30] (21);
\draw[ultra thick, green!40!black, densely dotted] (01) to[bend left=30] (21);
\draw[ultra thick, green!40!black, densely dotted] (00) to[bend left=30] (10) to[bend left=30] (20);
\draw[ultra thick, green!40!black, densely dotted] (00) to[bend left=30] (20);

\draw[ultra thick, purple, dashed] (20) to[bend left=30] (21) to[bend left=30] (22);
\draw[ultra thick, purple, dashed] (20) to[bend left=30] (22);
\draw[ultra thick, purple, dashed] (10) to[bend left=30] (11) to[bend left=30] (12);
\draw[ultra thick, purple, dashed] (10) to[bend left=30] (12);
\draw[ultra thick, purple, dashed] (00) to[bend left=30] (01) to[bend left=30] (02);
\draw[ultra thick, purple, dashed] (00) to[bend left=30] (02);

\node[draw,thick,circle,fill=blue!50] at (0,2) (02) {$0$};
\node[draw,thick,circle,fill=white] at (1,2) (22) {$1$};
\node[draw,thick,circle,fill=blue!50] at (2,2) (22) {$2$};

\node[draw,thick,circle,fill=blue!50] at (0,1) (01) {$1$};
\node[draw,thick,circle,fill=white] at (1,1) (11) {$2$};
\node[draw,thick,circle,fill=white] at (2,1) (21) {$0$};

\node[draw,thick,circle,fill=white] at (0,0) (00) {$2$};
\node[draw,thick,circle,fill=blue!50] at (1,0) (10) {$0$};
\node[draw,thick,circle,fill=white] at (2,0) (20) {$1$};
\end{tikzpicture}
\end{array}
$
\caption{\label{fi:LRgraph}Converting between a Latin square $L$ and the equivalent Latin square graph $\Gamma_L$, with an $(n+1)$-cover highlighted in both.  Edge colours are added to indicate the relationship between neighbouring entries (dotted for the same row, dashed for the same column and solid for the same symbol).}
\end{figure}

% [[ISSUE: journal version will be black and white!?]]

% Becky: I think this should be resolved now.  This is the only place we refer to edge colors (I changed it to "dotted", "dashed" and "solid".)

% Color Art

% Color art is free of charge for online publication.

% If black and white will be shown in the print version, make sure that the main information will still be visible. Many colors are not distinguishable from one another when converted to black and white. A simple way to check this is to make a xerographic copy to see if the necessary distinctions between the different colors are still apparent.

% If the figures will be printed in black and white, do not refer to color in the captions.

% Color illustrations should be submitted as RGB (8 bits per channel).

Any cover of $L$ maps to a dominating set of $\Gamma_L$.  In fact, any cover of $L$ corresponds to a $3$-dominating set of $\Gamma_L$, i.e., any entry outside the $3$-dominating set has $3$ or more neighbours inside the $3$-dominating set \cite[Sec.\,7.1]{HHS5} (see also \cite{kdomsurvey}).  The converse is not true, i.e., not every $3$-dominating set is a cover: a $3$-dominating set (actually a $4$-dominating set) is formed in $\Gamma_L$ by the entries with symbols $0$ and $1$ in any Latin square $L$ of order $n \geq 2$. Yet, when $n \geq 3$, this $4$-dominating set does not cover the symbol $2$.  A cover therefore corresponds to a special kind of $3$-dominating set, where each $n$-clique (arising from each line in the Latin square) has a representative in the cover.

Let $L$ be a Latin square of order $n \geq 3$.  The domination number of $\Gamma_L$, i.e., the size of the smallest dominating set of $\Gamma_L$, denoted $\gamma(\Gamma_L)$, is less than $n$: to form an $(n-1)$-entry dominating set, select all but one of the entries with symbol $0$.  In fact, $\gamma(\Gamma_L)$ will likely be smaller than $n-1$, since any maximal partial transversal corresponds to a dominating set in $\Gamma_L$.  However, for a $3$-dominating set of cardinality $a$ to exist in $\Gamma_L$, we must have \[a\, 3(n-1) \geq 3(n^2-a)\] since each of the $a$ entries in the $3$-dominating set dominates at most $3(n-1)$ vertices, and there are $n^2-a$ entries dominated at least $3$ times each.  This implies that $a \geq n$, implying the $3$-domination number of $\Gamma_L$, denoted $\gamma_3(\Gamma_L)$, is strictly greater than the domination number, i.e., $\gamma_3(\Gamma_L)>\gamma(\Gamma_L)$.  (In fact, $\gamma_3(G)>\gamma(G)$ holds for all graphs $G$ with minimum degree at least $3$ \cite[Cor.~7.2]{HHS5}.)

% An order-$n$ Latin square graph $\Gamma_L$ has $n^2$ vertices and is $3(n-1)$-regular, so [[Rautenbach and Volkmann, 2007]] gives \[\gamma_3(\Gamma_L) \leq \frac{n^2}{3n-3} \left( 3\ln(3n-2) + \frac{1}{(3n-2)^2}+\frac{1}{3n-2}+1 \right) \sim n \ln n.\] [[becky: not exciting, .we can get 2n easily]]

For each Latin square $L$ there are six \emph{conjugate} squares obtained by uniformly permuting the three coordinates in $E(L)$. An \emph{isotopism} of $L$ is a permutation of its rows, permutation of its columns and permutation of its symbols. The resulting square is said to be {\em isotopic\/} to $L$.  The {\em isotopism class} of $L$ is the set of Latin squares isotopic to $L$. The {\em autotopism group} of $L$ is the group of isotopisms that map $L$ to itself. The {\em species} of $L$ is the set of squares that are isotopic to some conjugate of $L$. 

A theme in our work is to explore a loose kind of duality between covers and partial transversals. In \sref{s:duality} we demonstrate some relationships between the sizes of maximum partial transversals and minimum covers, and between the numbers of these objects. In \sref{s:maxmincov} we look at the other end of the spectrum, namely small maximal partial transversals and large minimal covers. Here we find less of a connection. We show that Latin squares of a given size have little variation in the size of their largest minimal covers, but can vary significantly in the size of their smallest maximal partial transversals. In \sref{s:conclude} we summarise our achievements and discuss possible directions for future research.

\section{Covers and partial transversals}\label{s:duality}

In this section, we explore some basic relationships between covers and partial transversals.  We first consider how to turn a partial transversal into a cover. Throughout, we will use $\unkn$ in an entry when its value is irrelevant to our argument. For example, $(i,j,\unkn)$ is the entry in row $i$ and column $j$, while $(\unkn,\unkn,k)$ is an arbitrary entry with symbol $k$.

\begin{theo}\label{th:PTtoCover}
In a Latin square $L$ of order $n \geq 2$, any partial transversal $T$ of deficit $d$ is contained in an $(n+\lceil d/2 \rceil)$-cover.  Moreover, if $T$ is maximal, then the smallest cover containing $T$ has size $n+\lceil d/2 \rceil$.
\end{theo}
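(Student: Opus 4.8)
The plan is to prove the two halves separately. For the first half, I start with a partial transversal $T$ of deficit $d$, so $|T|=n-d$ and $T$ represents exactly $3(n-d)$ lines, leaving $3d$ lines unrepresented: $d$ rows, $d$ columns and $d$ symbols. The idea is to add entries to $T$ to cover these leftover lines as efficiently as possible. Each entry we add can represent up to three new lines, but since $T$ already covers all but $d$ rows, $d$ columns and $d$ symbols, a single cleverly chosen entry can hope to knock out one missing row, one missing column and one missing symbol at once. If we could always do that, we would need only $d$ more entries and get an $(n+d)$-cover -- but we can do better by a pairing trick, which is where the $\lceil d/2\rceil$ comes from.

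\medskip
\noindent\textbf{The pairing trick.} Let $R$, $C$, $S$ be the sets of unrepresented rows, columns and symbols respectively, each of size $d$. I would pair up the elements of $R$ (and likewise $C$, $S$) into $\lceil d/2\rceil$ pairs, at most one of which is a singleton when $d$ is odd. For each pair of rows $\{r,r'\}$, one pair of columns $\{c,c'\}$ and one pair of symbols $\{s,s'\}$, I want to find \emph{two} entries that between them represent all six of these lines: for instance an entry $(r,c,\unkn)$ and an entry $(r',c',\unkn)$ whose symbols happen to be $s$ and $s'$ in some order, or failing that, entries that use the $2\times2$ subsquare on rows $\{r,r'\}$ and columns $\{c,c'\}$. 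The key observation is that in any Latin square the $2\times2$ submatrix on rows $\{r,r'\}$, columns $\{c,c'\}$ contains entries with at least two distinct symbols (in fact the two entries on a diagonal of this submatrix have distinct symbols unless the submatrix is an intercalate, and even then the two entries on one diagonal share a symbol while the other diagonal shares a different symbol). So I can choose a diagonal of this $2\times2$ submatrix, giving two entries covering rows $r,r'$, columns $c,c'$ and two distinct symbols; with a little care about which diagonal and how the symbol pairs are assigned, two entries suffice per triple of pairs. Summing over the $\lceil d/2\rceil$ groups gives at most $2\lceil d/2\rceil\le n-d$ extra entries... wait, I actually want the total to be $n+\lceil d/2\rceil$, i.e.\ exactly $\lceil d/2\rceil$ more than $|T|$ would be if $|T|=n$. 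Since $|T|=n-d$, I need the number of added entries to be $d+\lceil d/2\rceil$. Hmm -- so the budget is actually more generous: I may use up to $\lceil 3d/2\rceil$ extra entries, which certainly covers the pairing construction where each pair of missing rows/columns/symbols needs to be handled. Concretely: for each of the $\lceil d/2\rceil$ row-pairs I add entries covering those two rows and two new columns (there are enough missing columns: I can pair rows and columns together, using $\lceil d/2 \rceil$ pairs, needing 2 entries each, but then symbols are handled "for free" or with the occasional extra entry). I would just verify the bookkeeping gives $\le d+\lceil d/2\rceil = n+\lceil d/2\rceil - |T| + |T| - n + \dots$; the clean statement is that $d$ missing lines of each type, handled two-at-a-time with two entries covering two rows, two columns and (generically) two symbols, needs $\lceil d/2\rceil$ such blocks of (at most) $3$ entries -- no: $2$ entries each -- but symbol coverage may force a third occasionally. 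The honest count to aim for is $\lceil 3d/2 \rceil$ added entries total.

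\medskip
\noindent\textbf{The lower bound for maximal $T$.} For the ``moreover'' clause, suppose $T$ is a \emph{maximal} partial transversal of deficit $d$, and let $\cov\supseteq T$ be any cover. I want $|\cov|\ge n+\lceil d/2\rceil$. Maximality of $T$ means that for every entry $\mathbf{e}=(i,j,k)\notin T$, adding $\mathbf{e}$ to $T$ creates a repeated line, i.e.\ at least one of row $i$, column $j$, symbol $k$ is already represented by $T$. Equivalently: there is \emph{no} entry whose row, column and symbol are all among the $d$ missing rows $R$, the $d$ missing columns $C$, and the $d$ missing symbols $S$. So the $d\times d$ submatrix $M$ on rows $R$ and columns $C$ uses \emph{no} symbol from $S$; its symbols all lie outside $S$. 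Now $\cov$ must represent each of the $d$ symbols in $S$, and none of those representatives lie in rows $R$ (already covered... no wait, rows in $R$ are exactly the uncovered ones). Let me recount: $\cov\setminus T$ must cover the $d$ rows of $R$, the $d$ columns of $C$, and the $d$ symbols of $S$ -- that is $3d$ lines. An entry covering a symbol $s\in S$ cannot lie in rows $R$ and columns $C$ simultaneously (by the submatrix observation), so it covers a symbol from $S$ but \emph{at most one} of ``a row in $R$'' and ``a column in $C$''. Hence each entry of $\cov\setminus T$ that is ``useful'' covers at most $2$ of the $3d$ missing lines -- except entries covering a row of $R$ and a column of $C$ but no symbol of $S$, which also cover at most $2$. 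Counting: $2|\cov\setminus T|\ge 3d$, so $|\cov\setminus T|\ge \lceil 3d/2\rceil$, giving $|\cov|\ge n-d+\lceil 3d/2\rceil = n+\lceil d/2\rceil$. That matches, and combined with the first half (which gives a cover of exactly this size), the smallest such cover has size exactly $n+\lceil d/2\rceil$.

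\medskip
\noindent\textbf{Main obstacle.} The routine direction is the lower bound; the fiddly part is the explicit construction in the first half -- making sure the pairing of missing rows, columns and symbols can always be realised by actual entries of $L$ within the claimed budget, handling the parity (odd $d$ leaves one unpaired row, column, symbol, dealt with by a single extra entry, using $n\ge2$), and checking the edge cases where $2\times2$ submatrices are intercalates so that the symbol assignment is forced. I expect the cleanest writeup pairs a missing row with a missing column, uses the two entries on one diagonal of the corresponding $2\times 2$ submatrix to grab two missing symbols when possible, and absorbs any shortfall in symbol coverage into the slack between $2\lceil d/2\rceil$ and $\lceil 3d/2\rceil$; verifying that slack always suffices is the one genuine thing to check.
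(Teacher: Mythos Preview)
Your lower-bound argument for the ``moreover'' clause is correct and in fact more explicit than the paper's: you correctly observe that maximality of $T$ forces the $d\times d$ submatrix on the missing rows $R$ and columns $C$ to avoid every symbol in $S$, so each entry of $E(L)\setminus T$ hits at most two of the $3d$ missing lines, giving $|\cov\setminus T|\ge\lceil 3d/2\rceil$ and hence $|\cov|\ge n+\lceil d/2\rceil$.

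The construction (upper bound), however, has a genuine gap, and your own lower-bound observation is what kills it. When $T$ is maximal, the submatrix on $R\times C$ contains \emph{no} symbol from $S$, so the diagonals of your $2\times2$ blocks never pick up a missing symbol. Your proposed ``slack'' between $2\lceil d/2\rceil$ and $\lceil 3d/2\rceil$ is about $d/2$, but after your diagonal entries you still need to cover all $d$ symbols of $S$ separately, which exceeds the budget. Concretely, take $d=2$: your diagonal gives two entries covering both missing rows and both missing columns but neither missing symbol; two further entries for the symbols makes four, whereas the budget is $\lceil 3d/2\rceil=3$.

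The paper's construction sidesteps this by not trying to squeeze six lines out of two entries. It groups the missing lines in sixes and spends \emph{three} entries per group: for each $i\le\lfloor d/2\rfloor$ it adds
\[
(r_{2i-1},c_{2i-1},\unkn),\qquad (r_{2i},\unkn,s_{2i-1}),\qquad (\unkn,c_{2i},s_{2i}).
\]
Each such entry exists in any Latin square and covers exactly two of the six missing lines $r_{2i-1},r_{2i},c_{2i-1},c_{2i},s_{2i-1},s_{2i}$; together the three cover all six. Two leftover entries handle the residual row, column and symbol when $d$ is odd. The total is $\lceil 3d/2\rceil$ added entries, matching the lower bound exactly, and for non-maximal $T$ one simply pads up to the required size.
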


\begin{proof}
We begin assuming $T$ is maximal, in which case any entry in $E(L) \setminus T$ covers at most two previously uncovered lines.
Let $r_1,\dots,r_d$, $c_1,\dots,c_d$ and $s_1,\dots,s_d$ denote, respectively, the rows, columns and symbols that are unrepresented in $T$. Start by setting $\cov = T$. Then for $i\in\{1,\dots,\lfloor d/2\rfloor\}$ we add
$(r_{2i-1},c_{2i-1},\unkn)$, $(r_{2i},\unkn,s_{2i-1})$ and $(\unkn,c_{2i},s_{2i})$ to $\cov$. Finally, if $d$ is odd we add $(r_d,c_d,\unkn)$ and $(\unkn,\unkn,s_d)$ to $\cov$. This produces a cover of size $n-d+\lceil 3d/2\rceil=n+\lceil d/2\rceil$.
As we covered the maximum possible number of uncovered lines at each step, no smaller cover contains~$T$.

If $T$ is not maximal, then the above approach gives a cover $\cov$ of size at most $n+\lceil d/2 \rceil$, since there may be duplication among the entries that are added. Assuming $n\ge2$, we can simply add entries from $E(L) \setminus \cov$ until we have a cover of size $n+\lceil d/2 \rceil$.
\end{proof}

Since Shor and Hatami \cite{HS08} have shown the existence of a partial transversal with small deficit, we immediately get:

\begin{corol}\label{cy:allLSsmallcov}
Every Latin square of order $n$ has a cover of size $n+O(\log^2 n)$.
\end{corol}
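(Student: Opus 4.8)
The plan is simply to chain together the two ingredients that are already available. First I would invoke the theorem of Shor and Hatami \cite{HS08}, which guarantees that every Latin square of order $n$ contains a partial transversal $T$ of deficit $d$ for some $d=O(\log^2 n)$. Then I would feed this $T$ into \tref{th:PTtoCover}: its first assertion says that, in a Latin square of order $n\ge2$, any partial transversal of deficit $d$ is contained in an $(n+\lceil d/2\rceil)$-cover. (The order-$1$ case is trivial, since the unique entry is already a cover.) Combining the two gives a cover of size $n+\lceil d/2\rceil=n+O(\log^2 n)$, which is exactly the claimed bound.

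Note that we only use the plain existence half of \tref{th:PTtoCover}; the sharper statement about maximal partial transversals (that the smallest containing cover has size exactly $n+\lceil d/2\rceil$) is not needed here, and we do not even need $T$ to be maximal. The only thing to be careful about is the bookkeeping with the asymptotic notation: dividing $d$ by $2$ and then applying the ceiling changes only constants, so $\lceil d/2\rceil = O(\log^2 n)$ remains valid.

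Since the argument is a one-line substitution, there is no genuine obstacle to overcome; the "work" has all been done already, partly by Shor and Hatami and partly in the proof of \tref{th:PTtoCover}. If one wanted only the weaker conclusion $n+o(n)$ one could instead cite the Pippenger--Spencer decomposition result quoted in the introduction, but using the Shor--Hatami input in place of it yields the stronger polylogarithmic error term stated in the corollary.
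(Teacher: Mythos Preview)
Your argument is correct and matches the paper's own reasoning exactly: the corollary is stated immediately after \tref{th:PTtoCover} with only the sentence ``Since Shor and Hatami \cite{HS08} have shown the existence of a partial transversal with small deficit, we immediately get:'' as justification. Your additional remarks about the $n=1$ case, the unused maximality clause, and the asymptotic bookkeeping are all accurate but go beyond what the paper bothers to spell out.
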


We now consider how to turn a cover into a partial transversal.

\begin{theo}\label{th:CovertoPT}
Let $L$ be Latin square of order $n \geq 1$. Any $(n+a)$-cover of $L$ contains a partial transversal of deficit $2a$.
\end{theo}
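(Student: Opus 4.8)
The plan is to locate inside the cover a partial transversal that cannot be enlarged within the cover, bound its deficit by a double-counting argument, and then (if necessary) delete a few entries to land on the prescribed size.

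First I would choose a partial transversal $T\subseteq\cov$ that is maximal with respect to inclusion among all partial transversals contained in $\cov$; such a $T$ exists since $\emptyset$ is one. Say $T$ has deficit $d$, so $|T|=n-d$ and hence $|\cov\setminus T|=(n+a)-(n-d)=a+d$. Exactly $3d$ lines fail to be represented by $T$, namely $d$ rows, $d$ columns and $d$ symbols; call these the \emph{missing} lines. Since $\cov$ is a cover and $T$ represents none of the missing lines, every missing line is represented by at least one entry of $\cov\setminus T$.

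The key step is the observation that no entry $(r,c,s)\in\cov\setminus T$ can lie in a missing row \emph{and} a missing column \emph{and} a missing symbol simultaneously: if it did, then $T\cup\{(r,c,s)\}$ would again be a partial transversal contained in $\cov$ (only the three lines through $(r,c,s)$ change their representation count, from $0$ to $1$), contradicting the maximality of $T$. Thus each entry of $\cov\setminus T$ is incident with at most two missing lines. Counting incidences between $\cov\setminus T$ and the missing lines from the two sides gives
\[
3d \;\le\; 2\,|\cov\setminus T| \;=\; 2(a+d),
\]
and therefore $d\le 2a$.

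Finally, $T$ is a partial transversal of deficit $d\le 2a$ contained in $\cov$. In the only range where the conclusion is not vacuous we have $2a\le n$, so $|T|=n-d\ge 2a-d\ge 0$, and deleting any $2a-d$ of the entries of $T$ yields a partial transversal of deficit exactly $2a$ still contained in $\cov$. I do not anticipate a genuine obstacle: the whole argument hinges on the incidence count, and the only points needing care are checking that ``maximal inside $\cov$'' really forces the two-missing-lines bound, and noting the degenerate case $2a>n$.
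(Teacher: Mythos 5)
Your proof is correct, but it takes a genuinely different route from the paper. The paper's argument is a short intersection trick: it chooses $n$-subsets $R$, $C$, $S$ of the cover $\cov$ in which every row (respectively column, symbol) is uniquely represented, notes that $R\cap C\cap S$ is a partial transversal, and bounds its size by $|\cov|-|\cov\setminus R|-|\cov\setminus C|-|\cov\setminus S|=(n+a)-3a=n-2a$. You instead take a partial transversal $T$ that is inclusion-maximal \emph{inside} $\cov$ and run an incidence double-count: maximality forces each entry of $\cov\setminus T$ to meet at most two of the $3d$ lines missed by $T$, while the cover property forces each missed line to meet $\cov\setminus T$, giving $3d\le 2(a+d)$ and hence $d\le 2a$. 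Both arguments are sound, and both need the same final padding step (delete entries to reach deficit exactly $2a$), which implicitly requires $2a\le n$ — a degenerate-case caveat inherent in the statement and shared by the paper's proof, which you at least flag explicitly. What each buys: your version shows the stronger structural fact that \emph{every} partial transversal maximal within an $(n+a)$-cover has deficit at most $2a$, and it echoes the counting idea behind \tref{th:PTtoCover}; the paper's version is shorter and, because the partial transversal is built as $R\cap C\cap S$, it allows a prescribed entry $\mathbf{e}$ of the cover to be forced into the partial transversal by placing $\mathbf{e}$ in all three of $R$, $C$, $S$ — a flexibility the paper later exploits in part 4 of \tref{t:partransinmincov}, and which is less immediate from your maximality argument.
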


\begin{proof}
Let $R$, $C$ and $S$ respectively be $n$-subsets of an $(n+a)$-cover $\cov$ in which each row, column and symbol is (necessarily uniquely) represented.  Note that $T = R \cap C \cap S$ is a partial transversal of $L$. Since $|\cov|=n+a$ and $|\cov \setminus R|=|\cov \setminus C|=|\cov \setminus S|=a$, $$T = R \cap C \cap S=\cov \setminus \big( (\cov \setminus R) \cup (\cov \setminus C) \cup (\cov \setminus S) \big)$$ has size at least $n-2a$, so has deficit at most $2a$. Finally, if $T$ has a smaller deficit, we can delete entries to obtain deficit exactly $2a$.
% \qed
\end{proof}

For any $(n+1)$-cover of a Latin square $L$, the corresponding $n+1$ vertices of the Latin square graph $\Gamma_L$ induce a subgraph with $3$ edges. (This is an example of a partial Latin square graph \cite{FalconStones}. A {\em partial Latin square} is a matrix in which entries are either empty or contain a single symbol, and no symbol is repeated within any row or column. Alternatively, a partial Latin square can be viewed as a set of triples where no two triples agree in more than one coordinate.)  Ignoring isolated vertices and edge colours, there are only $5$ such graphs, which we denote $G_1,\ldots,G_5$, depicted in \fref{fig:subgraphs}. We will refer to these graphs as being the graph \emph{induced} by the cover (specifically, this terminology ignores isolated vertices). Taking a conjugate of $L$ permutes the edge colours in the graph induced by a cover, which does not change the type of graph according to our classification.

\begin{figure}[htb]
\centering
\includegraphics{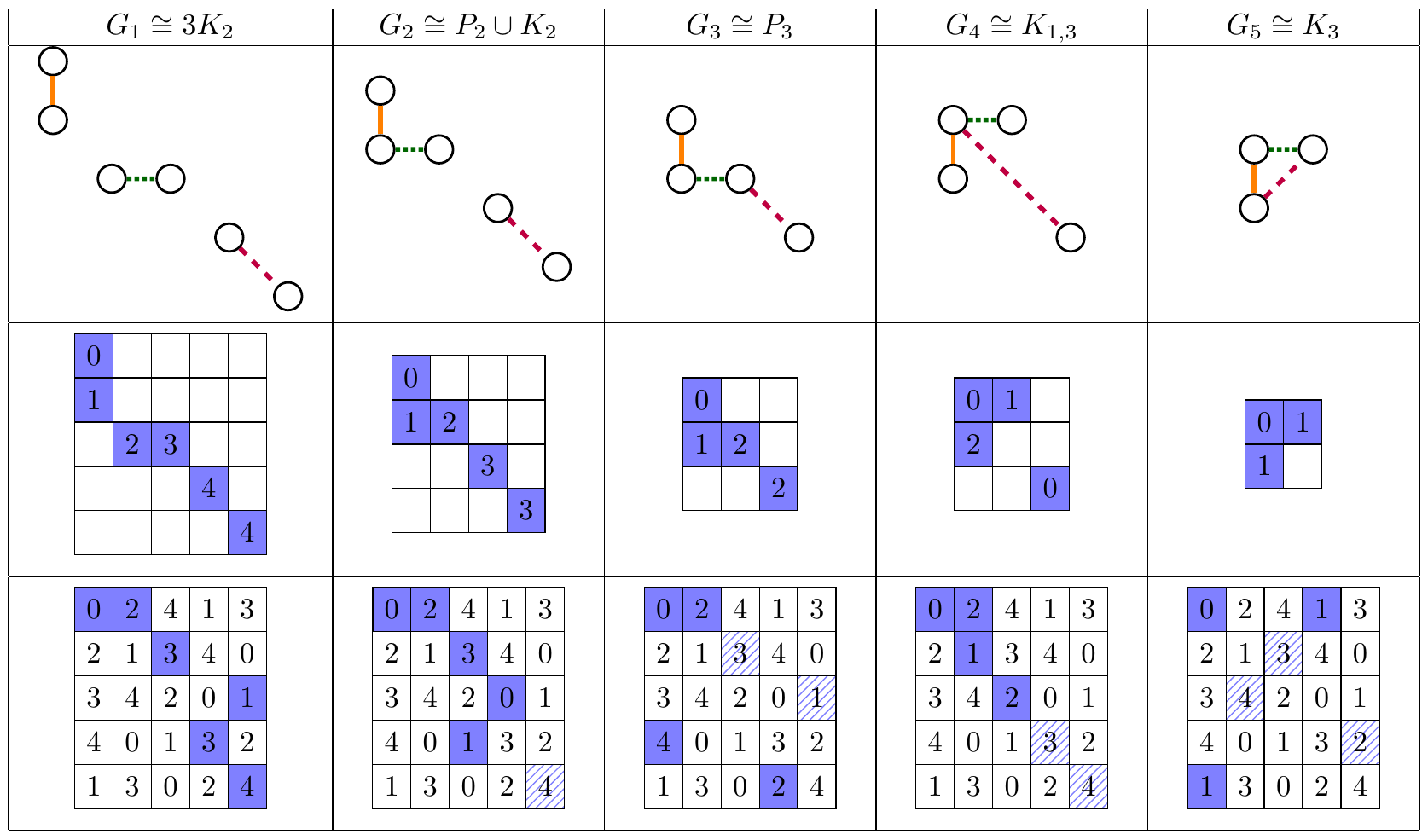}
\caption{\label{fig:subgraphs}\textit{Top row}: The five possible non-isomorphic subgraphs induced by an $(n+1)$-cover of a Latin square graph.
\textit{Middle row}: Depicting how the subgraphs $G_1,\ldots,G_5$ can arise in a cover.
\textit{Bottom row}: An example of a Latin square that simultaneously contains different covers that induce the five graph structures, $G_1,\ldots,G_5$.}
\end{figure}

A consequence of \tref{th:PTtoCover} is that any Latin square of order $n \geq 2$ with a partial transversal of deficit $1$ has an $(n+1)$-cover.  Thus, if Brualdi's Conjecture is true, then all Latin squares of order $n \geq 2$ have an $(n+1)$-cover.  A converse of this statement is not immediate since we cannot always delete $2$ entries from an $(n+1)$-cover to give a partial transversal of deficit $1$; see \fref{fig:subgraphs} (under graph $G_1$) for an example.  However, Theorems~\ref{th:PTtoCover} and~\ref{th:CovertoPT} imply that a Latin square $L$ of order $n \geq 2$ has a partial transversal of deficit $2$ if and only if it has an $(n+1)$-cover.  We now extend this observation to minimum covers.

\begin{theo}\label{th:minCoverPT}
Let $L$ be a Latin square of order $n \geq 2$.  The minimum size of a cover of $L$ is $n+a$ if and only if the minimum deficit of a partial transversal of $L$ is either $2a$ or $2a-1$.
\end{theo}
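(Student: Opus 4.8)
The plan is to read off both directions from Theorems~\ref{th:PTtoCover} and~\ref{th:CovertoPT}, which together sandwich the minimum cover size and the minimum partial-transversal deficit of $L$ around one another. Write $n+a$ for the minimum cover size of $L$ and $d$ for the minimum deficit among partial transversals of $L$; both quantities are well defined, since $E(L)$ is a cover and $\emptyset$ is a partial transversal.

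First I would prove the ``only if'' direction. Applying \tref{th:CovertoPT} to a minimum cover produces a partial transversal of deficit $2a$, so $d\le 2a$. In the other direction, take a partial transversal of minimum deficit $d$; by \tref{th:PTtoCover} it is contained in a cover of size $n+\lceil d/2\rceil$, whence $n+a\le n+\lceil d/2\rceil$, that is, $a\le\lceil d/2\rceil$. Checking the two parities of $d$ shows this forces $d\ge 2a-1$. Hence $2a-1\le d\le 2a$, i.e.\ $d\in\{2a-1,2a\}$. (When $a=0$ this reads $d=0$, matching the fact that a cover of size exactly $n$ must be a transversal.)

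For the converse, suppose $d\in\{2a-1,2a\}$ and let $n+a'$ be the actual minimum cover size of $L$. By the direction just established, $d\in\{2a'-1,2a'\}$ as well. The sets $\{2k-1,2k\}$, for $k=0,1,2,\dots$, are pairwise disjoint, so a single integer $d$ can lie in both $\{2a-1,2a\}$ and $\{2a'-1,2a'\}$ only if $a=a'$; thus the minimum cover size equals $n+a$.

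The statement is essentially a repackaging of the two preceding theorems, so I do not expect a genuine obstacle. The points that need a little care are the parity bookkeeping in the implication $a\le\lceil d/2\rceil\Rightarrow d\ge 2a-1$, and the observation --- needed for the converse --- that $d$ pins down $a$ uniquely because the target pairs $\{2k-1,2k\}$ do not overlap. It is also worth remarking that the two-value ambiguity in $d$ is unavoidable: Theorems~\ref{th:PTtoCover} and~\ref{th:CovertoPT} are each tight, so for a given minimum cover size $n+a$ both $d=2a$ and $d=2a-1$ can actually occur.
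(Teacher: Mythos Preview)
Your proof is correct and follows essentially the same approach as the paper: both directions are obtained by playing \tref{th:PTtoCover} and \tref{th:CovertoPT} off against each other. One small notational wrinkle: in your opening paragraph you fix $a$ so that $n+a$ is the actual minimum cover size, but in the converse you treat $a$ as a hypothetical value and introduce $a'$ for the actual one---it would be cleaner to use a fresh letter (say $b$) for the hypothesised value in the converse. The paper handles the converse by a direct contradiction argument symmetric to the forward direction, whereas you deduce it from the forward direction via the disjointness of the pairs $\{2k-1,2k\}$; both are fine.
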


\begin{proof}
First suppose that $L$ has an $(n+a)$-cover and no smaller cover. By 
\tref{th:CovertoPT}, there is a partial transversal of deficit $2a$.  
If $L$ has a partial transversal of deficit at most $2a-2$, then
\tref{th:PTtoCover} implies there is a cover of size at most $n+a-1$,
which we are assuming is not the case. Hence, the
minimum deficit of a partial transversal is either $2a$ or $2a-1$.

For the converse, suppose the minimum deficit of a partial
transversal is either $2a$ or $2a-1$. By \tref{th:PTtoCover}, there 
is an $(n+a)$-cover. If there is a cover of size at most $n+a-1$,
then \tref{th:CovertoPT} implies there is a partial transversal of
deficit at most $2a-2$, which we are assuming is not the case.
% \qed
\end{proof}

For the $a=0$ case in \tref{th:minCoverPT}, a transversal of a Latin square of order $n$ is also an $n$-cover.  For the $a=1$ case, cyclic group tables of even order are examples for which the minimum size of a cover is $n+1$ and the minimum deficit of a partial transversal is $1$. Brualdi's Conjecture implies the minimum size of a cover of an order-$n$ Latin square is $n$ or $n+1$.

\fref{fig:subgraphs} also includes an example of a Latin square of order $5$ in which all five of the possible induced subgraphs are achieved by different $(n+1)$-covers.  We make the following observations about deleting vertices from the graphs in \fref{fig:subgraphs}.

\begin{itemize}
 \item For graph $G_4$, we can delete one vertex to create an edgeless graph, so deleting the corresponding entry from the $(n+1)$-cover gives a transversal.  Thus $(n+1)$-covers that induce $G_4$  are not minimal, unlike the other four graphs ($G_1$, $G_2$, $G_3$ and $G_5$).
 \item For graphs $G_2,\ldots,G_5$, we can delete two vertices to create an edgeless graph, and deleting the corresponding entries from the $(n+1)$-cover gives a near-transversal.
 \item For graph $G_1$, we must delete at least $3$ vertices to create an edgeless graph.
 \item For any vertex $v$ of any of the five graphs $G_1,\ldots,G_5$, it is possible to delete $3$ or fewer vertices to create an edgeless graph without deleting $v$.  Thus when $n \geq 2$, every entry in an $(n+1)$-cover belongs to a partial transversal of deficit $2$.
\end{itemize}

We define $q_i=q_i(L)$ to be the number of $(n+1)$-covers that induce $G_i$ in a Latin square $L$. Across all isotopism classes of order $n \leq 8$, we found all $(n+1)$-covers. \tabref{tab:avg-q_i} lists the average number of $(n+1)$-covers that induce each graph across these isotopism classes. \tabref{tab:avg-q_i} also shows the fewest number of $(n+1)$-covers found of each of the 5 types. It is interesting to note that for each $n$, the number of all $(n+1)$-covers is fairly consistent across the Latin squares of order $n$ (in the sense that the range is small compared to the average). This is not true, for example, for the number of transversals.

\begin{table*}[ht]\centering
\begin{tabular}{@{}lrrrrrr@{}}\toprule
& \multicolumn{6}{c}{Average number of covers} \\
\cmidrule{2-7}
& $G_1$ & $G_2$ & $G_3$ & $G_4$ & $G_5$ & All \\ \midrule
$n = 5$ & 62& 90& 54& 180& 14& 400\\
$n = 6$ & 165& 889& 526& 229& 60& 1\,871\\
$n = 7$ & 1\,137& 4\,615& 2\,413& 900& 132& 9\,199\\
$n = 8$ & 8\,067& 24\,675& 10\,163& 3\,419& 483& 46\,808\\
\bottomrule
\end{tabular}

\medskip

\begin{tabular}{@{}lrrrrrrcrrrrrr@{}}\toprule
& \multicolumn{6}{c}{Minimum number of covers} & \phantom{abc}& \multicolumn{6}{c}{Maximum number of covers} \\
\cmidrule{2-7} \cmidrule{9-14}
& $G_1$ & $G_2$ & $G_3$ & $G_4$ & $G_5$ & All && $G_1$ & $G_2$ & $G_3$ & $G_4$ & $G_5$ & All\\ \midrule
$n = 5$ & 24 & 0 & 0 & 60 & 0 & 400 && 100 & 180 & 108 & 300 & 28 & 400  \\
$n = 6$ & 0 & 288 & 0 & 0 & 0 & 1\,728 && 384 & 1\,296 & 972 & 960 & 216 & 1\,944  \\
$n = 7$ & 888 & 0 & 0 & 126 & 0 & 8\,970 && 3\,528 & 5\,220 & 2\,700 & 5\,586 & 195 & 9\,354  \\
$n = 8$ & 4\,672 & 0 & 0 & 0 & 0 & 42\,240 && 22\,016 & 29\,376 & 12\,288 & 21\,504 & 1\,536 & 48\,832  \\
\bottomrule
\end{tabular}
\caption{\label{tab:avg-q_i}The number of $(n+1)$-covers that induce $G_i$, averaged over isotopism classes of Latin square of order $n$. We also give the minimum and maximum numbers of $(n+1)$-covers found in a Latin square. The columns headed ``All'' refer to the count of all $(n+1)$-covers irrespective of which $G_i$ they induce.}
\end{table*}

Theorems~\ref{th:CovertoPT} and~\ref{th:minCoverPT} leave open some possibilities, e.g.,\ a Latin square might have two minimum covers that differ in terms of the smallest deficit of the partial transversals that they contain.  The following theorem gives some restrictions in this context.

% \fref{f:both2land2l-1} shows two $9$-covers of the cyclic group table of order $8$, three of which contains a partial transversal of deficit $1$, while the other does not.  This Latin square has no transversals (as do all even-order cyclic group tables), so the highlighted $9$-covers are minimum.  This example shows that a minimum cover need not contain a partial transversal of minimum deficit. 

\begin{figure}[htp]
\centering
$\begin{array}{cccc}
\begin{tikzpicture}
\matrix[square matrix]{
|[fill=blue!50]| 0 & |[fill=white]| 1 & |[fill=white]| 2 & |[fill=white]| 3 & |[fill=white]| 4 & |[fill=white]| 5 & |[fill=white]| 6 & |[fill=white]| 7 \\
|[fill=blue!50]| 1 & |[fill=white]| 2 & |[fill=white]| 3 & |[fill=white]| 4 & |[fill=white]| 5 & |[fill=white]| 6 & |[fill=white]| 7 & |[fill=white]| 0 \\
|[fill=white]| 2 & |[fill=blue!50]| 3 & |[fill=white]| 4 & |[fill=blue!50]| 5 & |[fill=white]| 6 & |[fill=white]| 7 & |[fill=white]| 0 & |[fill=white]| 1 \\
|[fill=white]| 3 & |[fill=white]| 4 & |[fill=white]| 5 & |[fill=white]| 6 & |[pattern=north east lines, pattern color=blue!50]| 7 & |[fill=white]| 0 & |[fill=white]| 1 & |[fill=white]| 2 \\
|[fill=white]| 4 & |[fill=white]| 5 & |[fill=blue!50]| 6 & |[fill=white]| 7 & |[fill=white]| 0 & |[fill=white]| 1 & |[fill=white]| 2 & |[fill=white]| 3 \\
|[fill=white]| 5 & |[fill=white]| 6 & |[fill=white]| 7 & |[fill=white]| 0 & |[fill=white]| 1 & |[pattern=north east lines, pattern color=blue!50]| 2 & |[fill=white]| 3 & |[fill=white]| 4 \\
|[fill=white]| 6 & |[fill=white]| 7 & |[fill=white]| 0 & |[fill=white]| 1 & |[fill=white]| 2 & |[fill=white]| 3 & |[pattern=north east lines, pattern color=blue!50]| 4 & |[fill=white]| 5 \\
|[fill=white]| 7 & |[fill=white]| 0 & |[fill=white]| 1 & |[fill=white]| 2 & |[fill=white]| 3 & |[fill=white]| 4 & |[fill=white]| 5 & |[fill=blue!50]| 6 \\
};
\end{tikzpicture}
&
\begin{tikzpicture}
\matrix[square matrix]{
|[fill=blue!50]| 0 & |[fill=white]| 1 & |[fill=white]| 2 & |[fill=white]| 3 & |[fill=white]| 4 & |[fill=white]| 5 & |[fill=white]| 6 & |[fill=blue!50]| 7 \\
|[fill=white]| 1 & |[fill=white]| 2 & |[fill=white]| 3 & |[fill=blue!50]| 4 & |[fill=white]| 5 & |[fill=white]| 6 & |[fill=white]| 7 & |[fill=white]| 0 \\
|[fill=white]| 2 & |[fill=white]| 3 & |[fill=white]| 4 & |[fill=white]| 5 & |[pattern=north east lines, pattern color=blue!50]| 6 & |[fill=white]| 7 & |[fill=white]| 0 & |[fill=white]| 1 \\
|[fill=white]| 3 & |[fill=blue!50]| 4 & |[fill=white]| 5 & |[fill=white]| 6 & |[fill=white]| 7 & |[fill=white]| 0 & |[fill=white]| 1 & |[fill=white]| 2 \\
|[fill=white]| 4 & |[fill=white]| 5 & |[fill=white]| 6 & |[fill=white]| 7 & |[fill=white]| 0 & |[fill=white]| 1 & |[pattern=north east lines, pattern color=blue!50]| 2 & |[fill=white]| 3 \\
|[fill=blue!50]| 5 & |[fill=white]| 6 & |[fill=white]| 7 & |[fill=white]| 0 & |[fill=white]| 1 & |[fill=white]| 2 & |[fill=white]| 3 & |[fill=white]| 4 \\
|[fill=white]| 6 & |[fill=white]| 7 & |[fill=white]| 0 & |[fill=white]| 1 & |[fill=white]| 2 & |[pattern=north east lines, pattern color=blue!50]| 3 & |[fill=white]| 4 & |[fill=white]| 5 \\
|[fill=white]| 7 & |[fill=white]| 0 & |[pattern=north east lines, pattern color=blue!50]| 1 & |[fill=white]| 2 & |[fill=white]| 3 & |[fill=white]| 4 & |[fill=white]| 5 & |[fill=white]| 6 \\
};
\end{tikzpicture}
&
\begin{tikzpicture}
\matrix[square matrix]{
|[fill=white]| 0 & |[fill=blue!50]| 1 & |[fill=white]| 2 & |[fill=white]| 3 & |[fill=blue!50]| 4 & |[fill=white]| 5 & |[fill=white]| 6 & |[fill=white]| 7 \\
|[fill=white]| 1 & |[fill=white]| 2 & |[pattern=north east lines, pattern color=blue!50]| 3 & |[fill=white]| 4 & |[fill=white]| 5 & |[fill=white]| 6 & |[fill=white]| 7 & |[fill=white]| 0 \\
|[fill=white]| 2 & |[fill=white]| 3 & |[fill=white]| 4 & |[fill=white]| 5 & |[fill=white]| 6 & |[pattern=north east lines, pattern color=blue!50]| 7 & |[fill=white]| 0 & |[fill=white]| 1 \\
|[fill=white]| 3 & |[fill=white]| 4 & |[fill=white]| 5 & |[fill=white]| 6 & |[fill=white]| 7 & |[fill=white]| 0 & |[fill=white]| 1 & |[pattern=north east lines, pattern color=blue!50]| 2 \\
|[fill=blue!50]| 4 & |[fill=white]| 5 & |[fill=white]| 6 & |[fill=white]| 7 & |[fill=white]| 0 & |[fill=white]| 1 & |[fill=white]| 2 & |[fill=white]| 3 \\
|[fill=white]| 5 & |[fill=white]| 6 & |[fill=white]| 7 & |[pattern=north east lines, pattern color=blue!50]| 0 & |[fill=white]| 1 & |[fill=white]| 2 & |[fill=white]| 3 & |[fill=white]| 4 \\
|[fill=blue!50]| 6 & |[fill=white]| 7 & |[fill=white]| 0 & |[fill=white]| 1 & |[fill=white]| 2 & |[fill=white]| 3 & |[fill=white]| 4 & |[fill=white]| 5 \\
|[fill=white]| 7 & |[fill=white]| 0 & |[fill=white]| 1 & |[fill=white]| 2 & |[fill=white]| 3 & |[fill=white]| 4 & |[pattern=north east lines, pattern color=blue!50]| 5 & |[fill=white]| 6 \\
};
\end{tikzpicture}
&
\begin{tikzpicture}
\matrix[square matrix]{
|[fill=blue!50]| 0 & |[fill=white]| 1 & |[fill=white]| 2 & |[fill=white]| 3 & |[fill=blue!50]| 4 & |[fill=white]| 5 & |[fill=white]| 6 & |[fill=white]| 7 \\
|[fill=white]| 1 & |[pattern=north east lines, pattern color=blue!50]| 2 & |[fill=white]| 3 & |[fill=white]| 4 & |[fill=white]| 5 & |[fill=white]| 6 & |[fill=white]| 7 & |[fill=white]| 0 \\
|[fill=white]| 2 & |[fill=white]| 3 & |[fill=white]| 4 & |[pattern=north east lines, pattern color=blue!50]| 5 & |[fill=white]| 6 & |[fill=white]| 7 & |[fill=white]| 0 & |[fill=white]| 1 \\
|[fill=white]| 3 & |[fill=white]| 4 & |[fill=white]| 5 & |[fill=white]| 6 & |[fill=white]| 7 & |[fill=white]| 0 & |[pattern=north east lines, pattern color=blue!50]| 1 & |[fill=white]| 2 \\
|[fill=blue!50]| 4 & |[fill=white]| 5 & |[fill=white]| 6 & |[fill=white]| 7 & |[fill=white]| 0 & |[fill=white]| 1 & |[fill=white]| 2 & |[fill=white]| 3 \\
|[fill=white]| 5 & |[fill=white]| 6 & |[pattern=north east lines, pattern color=blue!50]| 7 & |[fill=white]| 0 & |[fill=white]| 1 & |[fill=white]| 2 & |[fill=white]| 3 & |[fill=white]| 4 \\
|[fill=white]| 6 & |[fill=white]| 7 & |[fill=white]| 0 & |[fill=white]| 1 & |[fill=white]| 2 & |[pattern=north east lines, pattern color=blue!50]| 3 & |[fill=white]| 4 & |[fill=white]| 5 \\
|[fill=white]| 7 & |[fill=white]| 0 & |[fill=white]| 1 & |[fill=white]| 2 & |[fill=white]| 3 & |[fill=white]| 4 & |[fill=white]| 5 & |[pattern=north east lines, pattern color=blue!50]| 6 \\
};
\end{tikzpicture}
\\
G_1 & G_2 & G_3 & G_5
\end{array}$

% \begin{tikzpicture}
% \matrix[square matrix]{
% |[fill=blue!50]| 0 & |[fill=white]| 1 & |[fill=white]| 2 & |[fill=white]| 3 & 
% |[fill=white]| 4 & |[fill=white]| 5 \\
% |[fill=white]| 1 & |[fill=blue!50]| 2 & |[fill=white]| 3 & |[fill=white]| 4 & 
% |[fill=white]| 5 & |[fill=white]| 0 \\
% |[fill=white]| 2 & |[fill=white]| 3 & |[fill=blue!50]| 4 & |[fill=white]| 5 & 
% |[fill=white]| 0 & |[fill=white]| 1 \\
% |[fill=white]| 3 & |[fill=white]| 4 & |[fill=white]| 5 & |[fill=white]| 0 & 
% |[fill=blue!50]| 1 & |[fill=white]| 2 \\
% |[fill=white]| 4 & |[fill=white]| 5 & |[fill=white]| 0 & |[fill=white]| 1 & 
% |[fill=white]| 2 & |[fill=blue!50]| 3 \\
% |[fill=blue!50]| 5 & |[fill=white]| 0 & |[fill=white]| 1 & |[fill=blue!50]| 2 & 
% |[fill=white]| 3 & |[fill=white]| 4 \\
% };
% \end{tikzpicture}
% \qquad
% \begin{tikzpicture}
% \matrix[square matrix]{
% |[fill=blue!50]| 0 & |[fill=white]| 1 & |[fill=white]| 2 & |[fill=white]| 3 & 
% |[fill=white]| 4 & |[fill=white]| 5 \\
% |[fill=white]| 1 & |[fill=white]| 2 & |[fill=white]| 3 & |[fill=white]| 4 & 
% |[fill=blue!50]| 5 & |[fill=white]| 0 \\
% |[fill=white]| 2 & |[fill=white]| 3 & |[fill=blue!50]| 4 & |[fill=white]| 5 & 
% |[fill=white]| 0 & |[fill=blue!50]| 1 \\
% |[fill=blue!50]| 3 & |[fill=white]| 4 & |[fill=white]| 5 & |[fill=white]| 0 & 
% |[fill=white]| 1 & |[fill=white]| 2 \\
% |[fill=white]| 4 & |[fill=blue!50]| 5 & |[fill=white]| 0 & |[fill=white]| 1 & 
% |[fill=white]| 2 & |[fill=white]| 3 \\
% |[fill=white]| 5 & |[fill=white]| 0 & |[fill=white]| 1 & |[fill=blue!50]| 2 & 
% |[fill=white]| 3 & |[fill=white]| 4 \\
% };
% \end{tikzpicture}
\caption{\label{f:both2land2l-1}Minimum covers of the Cayley table of $\Z_8$ that induce graphs isomorphic to $G_1$, $G_2$, $G_3$ and $G_5$, respectively.  For the three right-most covers, we can delete $2$ entries from the highlighted cover to give a partial transversal of deficit $1$, but we must delete at least $3$ entries from the left-most cover to obtain a partial transversal, which will have deficit at least $2$.}
\end{figure}

\begin{figure}
\centering
\begin{tikzpicture}
\matrix[square matrix]{
|[fill=white]| 0 & |[fill=white]| 1 & |[fill=white]| 2 & |[fill=blue!50]| 3 & |[fill=white]| 4 & |[fill=white]| 5 & |[fill=white]| 6 & |[fill=white]| 7 & |[fill=white]| 8 & |[fill=white]| 9 \\
|[fill=white]| 1 & |[pattern=north east lines, pattern color=blue!50]| 2 & |[fill=white]| 3 & |[fill=white]| 4 & |[fill=white]| 5 & |[fill=white]| 6 & |[fill=white]| 7 & |[fill=white]| 8 & |[fill=white]| 9 & |[fill=white]| 0 \\
|[fill=white]| 2 & |[fill=white]| 3 & |[fill=white]| 4 & |[fill=white]| 5 & |[fill=white]| 6 & |[fill=white]| 7 & |[fill=white]| 8 & |[fill=white]| 9 & |[fill=white]| 0 & |[fill=blue!50]| 1 \\
|[fill=white]| 3 & |[fill=white]| 4 & |[fill=white]| 5 & |[fill=white]| 6 & |[pattern=north east lines, pattern color=blue!50]| 7 & |[fill=white]| 8 & |[fill=white]| 9 & |[fill=white]| 0 & |[fill=white]| 1 & |[fill=white]| 2 \\
|[fill=white]| 4 & |[fill=white]| 5 & |[fill=white]| 6 & |[fill=white]| 7 & |[fill=white]| 8 & |[fill=white]| 9 & |[pattern=north east lines, pattern color=blue!50]| 0 & |[fill=white]| 1 & |[fill=white]| 2 & |[fill=white]| 3 \\
|[fill=white]| 5 & |[fill=white]| 6 & |[fill=white]| 7 & |[fill=white]| 8 & |[fill=white]| 9 & |[fill=white]| 0 & |[fill=white]| 1 & |[fill=white]| 2 & |[fill=blue!50]| 3 & |[fill=white]| 4 \\
|[fill=white]| 6 & |[fill=white]| 7 & |[pattern=north east lines, pattern color=blue!50]| 8 & |[fill=white]| 9 & |[fill=white]| 0 & |[fill=white]| 1 & |[fill=white]| 2 & |[fill=white]| 3 & |[fill=white]| 4 & |[fill=white]| 5 \\
|[fill=white]| 7 & |[fill=white]| 8 & |[fill=white]| 9 & |[fill=white]| 0 & |[fill=white]| 1 & |[fill=white]| 2 & |[fill=white]| 3 & |[fill=white]| 4 & |[fill=white]| 5 & |[fill=blue!50]| 6 \\
|[fill=white]| 8 & |[fill=white]| 9 & |[fill=white]| 0 & |[fill=white]| 1 & |[fill=white]| 2 & |[fill=white]| 3 & |[fill=white]| 4 & |[pattern=north east lines, pattern color=blue!50]| 5 & |[fill=white]| 6 & |[fill=white]| 7 \\
|[fill=blue!50]| 9 & |[fill=white]| 0 & |[fill=white]| 1 & |[fill=white]| 2 & |[fill=white]| 3 & |[fill=blue!50]| 4 & |[fill=white]| 5 & |[fill=white]| 6 & |[fill=white]| 7 & |[fill=white]| 8 \\
};
\end{tikzpicture}
\caption{\label{fi:Z10cover}A minimum cover $\cov$ of the Cayley table of $\mathbb{Z}_{10}$, where $\cov$ does not contain a partial transversal of deficit $1$, and each partial transversal of deficit $2$ in $\cov$ is maximal.}
\end{figure}

\begin{theo}\label{t:partransinmincov}
Let $L$ be a Latin square of order $n\ge2$ in which the minimum deficit of a partial transversal is $d$.  Then:
\begin{enumerate}
 \item the minimum size of a cover of $L$ is $n+\lceil d/2 \rceil$,
 \item any partial transversal $T$ of deficit $d$ is contained in a minimum cover of $L$,
 \item any minimum cover contains a partial transversal of deficit $d$ if $d$ is even, or deficit $d+1$ if $d$ is odd, and
 \item if $d$ is even, any minimum cover that contains an entry $\mathbf{e}$ contains a minimum-deficit partial transversal that contains $\mathbf{e}$.
\end{enumerate}
\end{theo}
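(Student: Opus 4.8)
The plan is to obtain all four parts by specialising Theorems~\ref{th:PTtoCover}, \ref{th:CovertoPT} and~\ref{th:minCoverPT}; nearly all of the work is already done there, and what remains is bookkeeping about the parity of $d$ together with one small refinement.

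For part~(1), set $a=\lceil d/2\rceil$; then $d=2a$ if $d$ is even and $d=2a-1$ if $d$ is odd, so in either case $d\in\{2a,2a-1\}$, and \tref{th:minCoverPT} gives that the minimum cover of $L$ has size $n+a=n+\lceil d/2\rceil$. Part~(2) is then immediate from the first sentence of \tref{th:PTtoCover}: a partial transversal of deficit $d$ is contained in some $(n+\lceil d/2\rceil)$-cover, which by part~(1) is a minimum cover. For part~(3), take any minimum cover $\cov$, so $|\cov|=n+\lceil d/2\rceil$, and apply \tref{th:CovertoPT} with $a=\lceil d/2\rceil$ to obtain a partial transversal inside $\cov$ of deficit exactly $2\lceil d/2\rceil$, which equals $d$ when $d$ is even and $d+1$ when $d$ is odd, as required.

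Part~(4) is the only part that is not a direct quotation, and it is a refinement of the proof of \tref{th:CovertoPT} in the case $d$ even, $a=d/2$. Let $\cov$ be a minimum cover, $|\cov|=n+a$, and fix an entry $\mathbf{e}=(r,c,s)\in\cov$. Imitating the proof of \tref{th:CovertoPT} I would choose $n$-subsets $R,C,S\subseteq\cov$ that uniquely represent, respectively, every row, every column and every symbol, but now make the deliberate choice that $\mathbf{e}$ is the representative of row $r$ inside $R$, of column $c$ inside $C$, and of symbol $s$ inside $S$. Such a choice is always available, since $\mathbf{e}$ meets only one row, one column and one symbol while the remaining representatives on each line may be picked arbitrarily. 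Then $\mathbf{e}\in T:=R\cap C\cap S$, which is a partial transversal, and the same counting as in \tref{th:CovertoPT} gives $|T|\ge|\cov|-|\cov\setminus R|-|\cov\setminus C|-|\cov\setminus S|=(n+a)-3a=n-2a=n-d$, so $T$ has deficit at most $d$. As $d$ is the minimum deficit of $L$, the deficit of $T$ is exactly $d$, and $T$ is a minimum-deficit partial transversal containing $\mathbf{e}$, as claimed.

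I expect no serious obstacle; the only point to watch is why part~(4) needs $d$ even. For odd $d$ the same construction only yields $|T|\ge n-2\lceil d/2\rceil=n-d-1$, i.e.\ deficit at most $d+1$, and this bound cannot in general be sharpened: \fref{fi:Z10cover} exhibits a Latin square with (odd) minimum deficit $1$ together with a minimum cover that contains no partial transversal of deficit $1$ at all. The write-up should therefore flag that part~(4) has no odd counterpart, and that parts~(2) and~(3) are precisely what survives in that case.
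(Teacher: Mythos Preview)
Your proposal is correct and follows essentially the same route as the paper: parts~(1)--(3) are read off from Theorems~\ref{th:PTtoCover}, \ref{th:CovertoPT} and~\ref{th:minCoverPT}, and part~(4) is obtained exactly as the paper does, by re-running the proof of \tref{th:CovertoPT} with $R$, $C$, $S$ chosen so that $\mathbf{e}\in R\cap C\cap S$. The only cosmetic difference is that for part~(1) you invoke \tref{th:minCoverPT} directly while the paper argues the two inequalities separately from \tref{th:PTtoCover} and \tref{th:CovertoPT}; the content is identical.
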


\begin{proof}
If $L$ has a cover of size less than $n+\lceil d/2 \rceil$, then \tref{th:CovertoPT} implies it has a partial transversal of deficit less than $d$, contradicting the assumption that $d$ is the minimum deficit.  So any cover has size at least $n+\lceil d/2 \rceil$, and \tref{th:PTtoCover} implies $T$ is contained in some cover of size $n+\lceil d/2 \rceil$.

\tref{th:minCoverPT} implies that any $(n+\lceil d/2 \rceil)$-cover contains a partial transversal of deficit $2\lceil d/2 \rceil$ or $2\lceil d/2 \rceil-1$.  When $d$ is odd, these equal $d+1$ and $d$, respectively, and we can delete an entry from a partial transversal of deficit $d$ to obtain one of deficit $d+1$.  When $d$ is even, \tref{th:CovertoPT} implies the cover contains a partial transversal of deficit $d$, and we can ensure $\mathbf{e}$ belongs to this partial transversal by choosing $\mathbf{e} \in R \cap C \cap S$ in the proof of Theorem~\ref{th:CovertoPT}.
% \qed
\end{proof}

\tref{t:partransinmincov} implies nothing consequential when $d=0$.  Cyclic groups of even order have minimum deficit $d=1$, and are thus a convenient example to verify that the conditions of \tref{t:partransinmincov} cannot be tightened.  To date, we only have examples of Latin squares where the minimum deficit of a partial transversal is $d \in \{0,1\}$, with Brualdi's Conjecture implying that this is always the case, so we cannot inspect $d \geq 2$ cases.  By inspecting cyclic group tables of even order, we make the following observations:

\begin{itemize}
 \item A minimum cover might not contain a partial transversal of minimum deficit $d$, but instead have one of deficit $d+1$.  \fref{f:both2land2l-1} gives an example of this; we give four covers of the Cayley table of $\Z_8$, one of which contains no partial transversal of deficit $1$.
% \item A minimum cover might contain two partial transversals, one of deficit $d$, and one of deficit $d+1$ which is not contained in a larger partial transversal within the cover.  If we delete the entries $(0,0,0)$, $(2,1,3)$ and $(4,2,6)$ from the left-most cover in \fref{f:both2land2l-1}, we obtain a maximal partial transversal of deficit $2$.  Deleting the entries $(0,0,0)$, $(2,1,3)$ and $(7,7,6)$ instead gives a partial transversal of deficit $2$ which, while it cannot be extended by adding entries from inside the cover, can be extended by adding the entry $(7,1,0)$ outside of the cover.
 \item A minimum cover might contain two maximal partial transversals, one of deficit $d$ and one of deficit $d+1$.  The cover that induces $G_2$ in \fref{f:both2land2l-1} has this property.
% \begin{enumerate}
%   \item if we delete entries $(0,0,0)$ and $(1,3,4)$ we obtain a maximal partial transversal of deficit $1$, and
%   \item if we delete entries $(0,7,7)$, $(5,0,5)$ and $(1,3,4)$ we obtain a maximal partial transversal of deficit $2$.
%   \end{enumerate}
 \item A minimum cover might contain no partial transversals of minimum deficit $d$, with the partial transversals of deficit $d+1$ it contains all being maximal.  For the the Cayley table of $\mathbb{Z}_{10}$, \fref{fi:Z10cover} shows an $11$-cover that contains no partial transversals of deficit $1$, and the $8$ partial transversals of deficit $2$ it contains are maximal.
\end{itemize}

Given the five possible graph structures of $(n+1)$-covers in \fref{fig:subgraphs}, we can enumerate the number of partial transversals of deficit $d$ they contain, which we do in Table~\ref{tab:nplus1PT}.  The terms in Table~\ref{tab:nplus1PT} are derived as follows: For each way we can delete $b+1$ vertices from the graph $G_i$ to form an edgeless graph, we can form a partial transversal of deficit $d$ by deleting them along with a further $d-b$ entries that are not involved in $G_i$.  Each partial transversal of deficit $d$ generated this way is distinct.  Generally, these are not maximal partial transversals, but they are maximal partial transversals when $d=0$, or when $d=1$ for graphs other than $G_4$.

\begin{table*}[ht]\centering
\begin{tabular}{@{}rlr@{\hskip5pt}c@{\hskip5pt}r@{\hskip5pt}c@{\hskip5pt}r@{\hskip5pt}c@{\hskip5pt}r@{\hskip5pt}llccc@{}}\toprule
& \multicolumn{9}{c}{Number of partial transversals of deficit $d$} & \phantom{abc}& \multicolumn{3}{c}{$d$} \\ \cmidrule{12-14}
& \multicolumn{9}{c}{contained in an $(n+1)$-cover that induces $G_i$} & & $0$ & $1$ & $2$ \\
\midrule
$G_1$ && $8\binom{n-5}{d-2}$ & $+$ & $12\binom{n-5}{d-3}$ & $+$ & $6\binom{n-5}{d-4}$ & $+$ & $\binom{n-5}{d-5}$ &&& $-$ & $-$ & $8$ \\
$G_2$ && $2\binom{n-4}{d-1}$ & $+$ & $7\binom{n-4}{d-2}$ & $+$ & $5\binom{n-4}{d-3}$ & $+$ & $\binom{n-4}{d-4}$ &&& $-$ & $2$ & $2n-1$ \\
$G_3$ &&                     &     & $3\binom{n-3}{d-1}$ & $+$ & $4\binom{n-3}{d-2}$ & $+$ & $\binom{n-3}{d-3}$ &&& $-$ & $3$ & $3n-5$ \\
$G_4$ && $\binom{n-3}{d}$    & $+$ & $3\binom{n-3}{d-1}$ & $+$ & $4\binom{n-3}{d-2}$ & $+$ & $\binom{n-3}{d-3}$ &&& $1$ & $n+3$ & $\tfrac{1}{2} (n^2-n+2)$ \\
$G_5$ &&                     &     &                     &     & $3\binom{n-2}{d-1}$ & $+$ & $\binom{n-2}{d-2}$ &&& $-$ & $3$ & $3n-5$ \\
\bottomrule
\end{tabular}
\caption{\label{tab:nplus1PT} The number of distinct partial transverals of deficit $d$ within an $(n+1)$-cover that induces the subgraph $G_i$.}
\end{table*}

\tabref{tab:nplus1PT} shows that the number of partial transversals of deficit $d$ in an $(n+1)$-cover depends significantly on its structure, e.g., when $d=2$, the number varies from $\Theta(1)$ to $\Theta(n^2)$.  We therefore do not anticipate a simple relationship between the number of partial transversals and the number of $(n+1)$-covers in general.  However, we have the following results for $(n+1)$-covers.

\begin{theo}\label{th:nplus1}
Let $T$ be a maximal partial transversal of deficit $d$ in a Latin square $L$ of order $n \geq 3$.
\begin{enumerate}
 \item If $d=0$, then $T$ belongs to exactly $n^2-n$ distinct $(n+1)$-covers, none of which are minimal.
 \item If $d=1$, then $T$ belongs to exactly $3(n-1)$ distinct $(n+1)$-covers, each of which is minimal.
 \item If $d=2$, then $T$ belongs to exactly $8$ distinct $(n+1)$-covers.
 \item If $d\ge 3$, then $T$ does not belong to any $(n+1)$-cover.
\end{enumerate}
\end{theo}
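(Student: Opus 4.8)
The plan is to treat the four cases in order of increasing difficulty, all resting on one counting observation. Since $T$ has $n-d$ entries, it represents $3(n-d)$ lines and leaves exactly $d$ rows, $d$ columns and $d$ symbols unrepresented. Thus an $(n+1)$-cover $\cov\supseteq T$ is formed by adjoining exactly $d+1$ further entries, and these must, between them, represent all $3d$ unrepresented lines. Because $T$ is maximal, the observation used in the proof of \tref{th:PTtoCover} applies to each adjoined entry: it represents at most two previously unrepresented lines, and it cannot represent three of them, since an entry lying in an unrepresented row, an unrepresented column and carrying an unrepresented symbol would extend $T$. Hence $2(d+1)\ge 3d$, i.e.\ $d\le 2$, which proves part~(4) immediately. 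For part~(1), when $d=0$ the set $T$ is itself an $n$-cover, so $T\cup\{\mathbf e\}$ is an $(n+1)$-cover for each of the $n^2-n$ entries $\mathbf e\notin T$; these covers are pairwise distinct, and none is minimal because $\mathbf e$ is redundant.

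For part~(2), with $d=1$ let $r,c,s$ denote the unrepresented row, column and symbol. First I would locate the entries outside $T$ that represent exactly two of $\{r,c,s\}$: there are precisely three, namely $(r,c,L_{rc})$, the entry in row $r$ carrying symbol $s$, and the entry in column $c$ carrying symbol $s$; maximality forces $L_{rc}\ne s$ and the analogous non-degeneracies, so these three entries are well defined, distinct, and none represents all three lines. An adjoined pair $\{\mathbf e,\mathbf f\}$ representing $\{r,c,s\}$ therefore either consists of two of these three special entries (every such pair works, giving $\binom{3}{2}=3$ choices) or of one special entry together with an entry representing exactly the single remaining line of $\{r,c,s\}$ (there are $n-2$ such entries for each of the three special entries, giving $3(n-2)$ choices); these two families are disjoint, for a total of $3(n-1)$. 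Minimality follows because $\cov\setminus\{\mathbf g\}$ has $n$ entries and so is a cover if and only if it is a transversal: if $\mathbf g$ is one of the two adjoined entries this would make $T$ together with the other adjoined entry a partial transversal strictly containing $T$, contradicting maximality; and if $\mathbf g\in T$ it would force $\mathbf g$ to share no line with any entry of $\cov\setminus\{\mathbf g\}$, which is impossible since $\mathbf g$ can be adjacent in $\Gamma_L$ to at most the two adjoined entries whereas $\cov$ induces a subgraph with three edges. (Equivalently, $\cov$ cannot induce the star $G_4$, so it is minimal.)

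For part~(3), with $d=2$ let $r_1,r_2$, $c_1,c_2$, $s_1,s_2$ be the unrepresented lines. Now the $d+1=3$ adjoined entries must represent $6$ unrepresented lines while each represents at most two, so each represents exactly two and the six lines are partitioned among them. Consequently each adjoined entry is of one of three types according to whether the two unrepresented lines it represents are (row, column), (row, symbol) or (column, symbol), and a short count of how many unrepresented rows, columns and symbols each type contributes forces exactly one adjoined entry of each type. Such a configuration is pinned down by three binary choices: which unrepresented row the (row, column)-type entry occupies, which unrepresented column it occupies, and which unrepresented symbol the (row, symbol)-type entry carries; all remaining data are then forced. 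I would then verify that each of these $2\cdot 2\cdot 2=8$ choices does yield a genuine $(n+1)$-cover containing $T$ — the three new entries being distinct from one another (the (row, column)-type entry carries a represented symbol, the other two carry the two distinct unrepresented symbols) and distinct from $T$ (each lies in an unrepresented line) by the usual maximality arguments, so $|\cov|=(n-2)+3=n+1$ and every line is covered — and that distinct choices produce distinct covers, since the three choices are read back off $\cov\setminus T$. Hence the count is exactly $8$.

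I expect the $d=2$ case to be the main obstacle: verifying that each of the eight type-assignments is realised by well-defined entries, that the resulting set genuinely covers every line, and that assignments correspond bijectively to covers requires the most careful bookkeeping, whereas parts~(1), (2) and~(4) are comparatively routine once the opening counting observation is in hand.
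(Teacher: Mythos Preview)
Your proof is correct and follows essentially the same approach as the paper: the same pigeonhole observation (each adjoined entry covers at most two missing lines, by maximality) drives all four cases, and your case analyses for $d=1$ and $d=2$ match the paper's, differing only in organization. Your $d=1$ count (split into ``two special entries'' versus ``one special plus one single-line entry'') and your $d=2$ derivation (forcing one entry of each type and then reading off three binary choices) are slightly more systematic than the paper's terser presentation, and your minimality argument via ``$\cov$ cannot induce $G_4$'' is exactly what the paper asserts. One minor wording slip: in the $d=1$ minimality paragraph you write that removing $\mathbf g\in T$ ``would force $\mathbf g$ to share no line with any entry of $\cov\setminus\{\mathbf g\}$'', but you mean the opposite---that all three edges of the induced subgraph would have to be incident to $\mathbf g$; your parenthetical about $G_4$ makes the intended argument clear.
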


\begin{proof}
The $d=0$ case is trivial, so we begin with the case $d=1$.  Assume row $i$, column $j$ and symbol $k$ are unrepresented by $T$.  Since $T$ is maximal, $(i,j,k) \not\in E(L)$, so to extend it to an $(n+1)$-cover, we must add entries of the form
\begin{itemize}
 \item $(i,j,\unkn)$ and $(\unkn,\unkn,k)$,
 \item $(i,\unkn,k)$ and $(\unkn,j,\unkn)$, excluding $(i,j,\unkn)$, or
 \item $(\unkn,j,k)$ and $(i,\unkn,\unkn)$, excluding $(i,j,\unkn)$ and $(i,\unkn,k)$.
\end{itemize}
This gives $3n-3$ distinct ways to extend $T$ to an $(n+1)$-cover.  Each cover induces a graph of type $G_2$, $G_3$ or $G_5$ (cf.\ \fref{fig:subgraphs}).  In particular, as $G_4$ does not arise, the $(n+1)$-covers are minimal.

Now assume $d=2$.  Assume rows $i$ and $i'$, columns $j$ and $j'$ and symbols $k$ and $k'$ are unrepresented by $T$.  Since $T$ is maximal, there are no entries of the form $(x,y,z)$ with $x \in \{i,i'\}$, $y \in \{j,j'\}$ and $z \in \{k,k'\}$.  One $(n+1)$-cover has the form $T \cup \{(i,j,\unkn),(i',\unkn,k),(\unkn,j',k')\}$, and we obtain all others by some combination of swapping $i$ and $i'$, swapping $j$ and $j'$, and/or swapping $k$ and $k'$.

When $d \geq 3$, Theorem~\ref{th:PTtoCover} implies that $T$ does not extend to an $(n+1)$-cover.
% \qed
\end{proof}

In the $d=2$ case of \tref{th:nplus1}, the $8$ distinct $(n+1)$-covers may or may not be minimal depending on the structure of $L$.  For example, when $n=3$, they are all non-minimal (since Latin squares of order $3$ have no minimal $4$-covers).

\begin{theo}\label{th:pmaxbound}
Let $L$ be a Latin square of order $n \geq 3$. Let $\pmax$ be the number of maximal partial transversals of deficit $1$ in $L$.  Let $\qmin$ be the number of minimal $(n+1)$-covers in $L$.  Then $\qmin = q_1 + q_2 + q_3 + q_5$ and
\begin{equation}\label{e:pmaxbnds}
0 \leq \frac{2(\qmin-q_1)}{3n-4} \leq \pmax \leq \frac{2(\qmin-q_1)}{3n-6} \leq \frac{2\qmin}{3n-6}.
\end{equation}
If $t$ is the number of transversals in $L$, then the number $p$ of (not necessarily maximal) partial transversals of deficit $1$ of $L$ and the number $q$ of (not necessarily minimal) $(n+1)$-covers of $L$ satisfy
\begin{equation}\label{e:pqrel}
p \leq \frac{2q+n(n-4)t}{3n-6}.
\end{equation}
\end{theo}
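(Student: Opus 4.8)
The plan is to prove both displays by double-counting incidences between maximal partial transversals of deficit $1$ and $(n+1)$-covers, sorted according to which of $G_1,\dots,G_5$ a cover induces. The identity $\qmin=q_1+q_2+q_3+q_5$ is immediate from the discussion following \fref{fig:subgraphs}: an $(n+1)$-cover is minimal precisely when it induces one of $G_1,G_2,G_3,G_5$, the $G_4$ case always having a redundant entry (the degree-$3$ vertex of the induced copy of $K_{1,3}$).

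To prove \eref{e:pmaxbnds} I would count the pairs $(T,\cov)$ in which $T$ is a maximal partial transversal of deficit $1$, $\cov$ is a minimal $(n+1)$-cover, and $T\subseteq\cov$. From the cover side, \tabref{tab:nplus1PT} and the paragraph after it give that such a $\cov$ contains exactly $0,2,3,3$ partial transversals of deficit $1$, all of them maximal, according as it induces $G_1,G_2,G_3,G_5$. From the other side, \tref{th:nplus1} says $T$ lies in exactly $3(n-1)$ minimal $(n+1)$-covers; re-examining the proof of that theorem, three of these extensions add two entries that share a line with one another and so induce $G_3$ or $G_5$, while the remaining $3(n-2)$ add two entries on pairwise distinct lines and induce $G_2$ except when a coincidence among the symbols and cells of the entries of $T$ intervenes, which a short case analysis shows forces $G_3$ for at most $6$ of these extensions. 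Hence $T$ lies in between $3$ and $9$ covers inducing $G_3$ or $G_5$. Equating the two counts, first over all minimal covers and then over just those inducing $G_3$ or $G_5$, yields $\pmax(3n-3)=2q_2+3q_3+3q_5$ together with $\pmax\le q_3+q_5\le3\pmax$. Since $2(\qmin-q_1)=2(q_2+q_3+q_5)=\pmax(3n-3)-(q_3+q_5)$, the sandwich on $q_3+q_5$ gives $\pmax(3n-6)\le2(\qmin-q_1)\le\pmax(3n-4)$, and dividing through (valid as $3n-6>0$ for $n\ge3$) gives the two middle inequalities of \eref{e:pmaxbnds}; the outer two are trivial because $0\le q_1\le\qmin$.

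For \eref{e:pqrel} I would use two bijections. Adjoining any entry $\mathbf{e}$ to a transversal $T_0$ gives an $(n+1)$-cover in which $\mathbf{e}$ is adjacent to exactly three, pairwise non-adjacent, entries of $T_0$, so it induces $G_4$; conversely each $G_4$-cover contains a unique transversal (obtained by deleting the degree-$3$ vertex). Hence $q_4=(n^2-n)t$. Likewise, a partial transversal of deficit $1$ fails to be maximal exactly when it equals $T_0\setminus\{\mathbf{e}\}$ for a (necessarily unique) transversal $T_0$ and some $\mathbf{e}\in T_0$, so $p=\pmax+nt$. Substituting $q=q_1+q_2+q_3+q_4+q_5$ and these two identities into the claimed inequality $p(3n-6)\le 2q+n(n-4)t$ and using $2(n^2-n)+n(n-4)=n(3n-6)$, all the $t$-terms cancel and the inequality collapses to $\pmax(3n-6)\le2\qmin$, which is exactly the rightmost inequality of \eref{e:pmaxbnds}.

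The main obstacle is the partial-transversal-side count in the second paragraph: one must return to the proof of \tref{th:nplus1}, determine the isomorphism type of the subgraph induced by each of the $3(n-1)$ one-step extensions of a maximal deficit-$1$ partial transversal, and prove the sharp bound that each of the three ``generic'' families of extensions contributes at most two members that degenerate from $G_2$ to $G_3$. It is precisely this bound (rather than the coarser estimates $3(n-1)$ or $n-1$) that pins down the denominators $3n-4$ and $3n-6$ in \eref{e:pmaxbnds}.
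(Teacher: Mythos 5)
Your proposal is correct and follows essentially the same route as the paper: the identity $\qmin=q_1+q_2+q_3+q_5$, the double count $3(n-1)\pmax=2q_2+3q_3+3q_5$, the sandwich $\pmax\le q_3+q_5\le 3\pmax$ obtained by showing each maximal deficit-$1$ partial transversal lies in between $3$ and $9$ minimal $(n+1)$-covers not inducing $G_2$, and the substitutions $p=\pmax+nt$, $q=\qmin+n(n-1)t$ for \eqref{e:pqrel}. The only (cosmetic) difference is bookkeeping: you bound the non-$G_2$ extensions directly as $3$ special pairs plus at most $6$ degenerate ones, whereas the paper counts at least $n-4$ extensions inducing $G_2$ in each of the three families, which amounts to the same $3$-to-$9$ range.
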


% Consequently \[p^{\text{\normalfont max}} \sim \frac{2(q^{\text{\normalfont min}}-q[G_1])}{3n}.\] % becky: got rid of this; I'm not sure it makes sense as p(max) is a function of L

\begin{proof}
The theorem is easily checked when $n=3$ since $\pmax=\qmin=q_1=0$ in this case, so assume $n \geq 4$. \tref{th:nplus1} implies that each maximal partial transversal $T$ of deficit $1$ embeds in exactly $3(n-1)$ distinct minimal $(n+1)$-covers.  Moreover, in the proof of \tref{th:nplus1}, we observed that these $(n+1)$-covers are of type $G_2$, $G_3$ or $G_5$, which contain exactly $2$, $3$ and $3$ maximal partial transversals of deficit $1$, respectively.  Thus,
\begin{equation}\label{eq:pmax}
3(n-1)\pmax = 2q_2 + 3q_3 + 3q_5.
\end{equation}
We also know
\[\qmin = q_1 + q_2 + q_3 + q_5\]
since $(n+1)$-covers are minimal if and only if they do not induce $G_4$.  Hence
\begin{equation}\label{eq:pmaxbound}
3(n-1)\pmax = 2\qmin -2q_1 + q_3 + q_5.
\end{equation}

Let $T$ be a maximal partial transversal of $L$ of deficit $1$. Up to isotopism of $L$, we may assume that $T=\{(i,i,i) : i \in \mathbb{Z}_n \setminus \{z\}\}$, where $z=n-1$, and $L_{zz}=0$.  Define $r$ such that $L_{rz}=z$ and define $c$ such that $L_{zc}=z$.  Among the $3(n-1)$ distinct minimal $(n+1)$-covers containing $T$, we have the following three families:
%Family 1: 
\begin{align*}
&\big\{T \cup \{(z,z,0),(i,j,z)\} : i,j \in \mathbb{Z}_n \setminus \{0,z\}  \big\},\\
%which accounts for at least $n-4$ distinct minimal $(n+1)$-covers containing $T$.
%Family 2: 
&\big\{T \cup \{(r,z,z),(z,j,k)\} : j,k \in \mathbb{Z}_n \setminus\{r,z\}  \big\}, \text{ and}\\
%which accounts for at least $n-4$ distinct minimal $(n+1)$-covers containing $T$.  %(note that $j \in \mathbb{Z}_n \setminus\{r,z\}$, but $j\neq z$ because $k \neq 0$, so we have not included it to avoid the perception that there are $5$ non-$G_2$ cases.) 
%Family 3: 
&\big\{T \cup \{(z,c,z),(i,z,k)\} : i,k \in \mathbb{Z}_n \setminus \{c,z\}  \big\}. 
\end{align*}
Each family accounts for at least $n-4$ distinct minimal $(n+1)$-covers containing $T$ and inducing $G_2$.  Since there are $3(n-1)$ minimal $(n+1)$-covers containing $T$, there can be at most $9$ that do not induce $G_2$, and hence either induce $G_3$ or $G_5$. 
We note that $T$ is contained in at least $3$ distinct minimal $(n+1)$-covers that do not induce $G_2$, corresponding to the three choices of two entries from $\{(r,z,z),(z,c,z),(z,z,0)\}$. 
This means there are between $3$ and $9$ distinct $(n+1)$-covers that induce $G_3$ or $G_5$ and contain $T$. 
Also, recall that each $(n+1)$-cover that induces $G_3$ or $G_5$ contains exactly $3$ maximal partial transversals of deficit $1$. 
This gives $ 3\pmax \leq 3q_3 + 3q_5 \leq 9\pmax$ or simply $\pmax \leq q_3 + q_5 \leq 3 \pmax$, which we substitute into \eqref{eq:pmaxbound} to obtain \eqref{e:pmaxbnds}.

The number $p$ of (not necessarily maximal) partial transversals of deficit $1$ of $L$ is $p=\pmax+nt$ and the number $q$ of (not necessarily minimal) $(n+1)$-covers of $L$ is $q=\qmin+q_4=\qmin+n(n-1)t$.  Combining this with \eref{e:pmaxbnds}, we get \eqref{e:pqrel}. 
% \qed
\end{proof}

Our next result is motivated by the work of Belyavskaya and Russu (see
\cite[p.\,179]{DKII}) who showed that Cayley tables of certain groups do
not have maximal partial transversals of deficit $1$, in which case
$\pmax=q_2=q_3=q_5=0$. This is an obstacle to finding a non-trivial
lower bound on $\pmax$ that is only a function of $\qmin$ and $n$.

\begin{lemm}\label{l:grpG1-5}
  Let $L$ be the Cayley table of an abelian group $\G$ of order $n$. If
  the Sylow $2$-subgroups of $\G$ are trivial or non-cyclic then $L$
  has no maximal partial transversal of deficit $1$ (and hence has no
  $(n+1)$-cover inducing $G_2$, $G_3$ or $G_5$). On the other hand,
  if the Sylow $2$-subgroups of $\G$ are non-trivial and cyclic then $L$
  has no transversal (and hence has no $(n+1)$-cover inducing $G_4$).
\end{lemm}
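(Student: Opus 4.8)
The plan is to reduce the entire statement to one elementary fact about the group sum $\Sigma := \sum_{g\in\G} g$. Write $\G$ additively with identity $0$; since every property mentioned in the lemma is an isotopism invariant, we may index the rows, columns and symbols of $L$ by $\G$ itself, with $L_{ij} = i+j$. First I would record the folklore evaluation of $\Sigma$: the map $g\mapsto -g$ partitions the non-involutions of $\G$ into pairs summing to $0$, so $\Sigma = \sum_{g\in\G[2]} g$, where $\G[2] = \{g\in\G : 2g = 0\}$. This is an elementary abelian $2$-group, say $\G[2]\cong(\Z_2)^k$, and a coordinatewise count shows $\sum_{v\in(\Z_2)^k} v$ equals $0$ when $k\ne1$ and equals the unique involution when $k=1$. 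Since $k$ is $0$, $1$, or at least $2$ exactly when the Sylow $2$-subgroup of $\G$ is trivial, non-trivial cyclic, or non-cyclic, it follows that $\Sigma = 0$ precisely when the Sylow $2$-subgroup is trivial or non-cyclic, and that otherwise $\Sigma$ is the non-zero element of order $2$.

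Next I would analyse a partial transversal $T$ of deficit $1$, with unrepresented row $r$, column $c$ and symbol $s$. Then $T = \{(i,\sigma(i),i+\sigma(i)) : i\in\G\setminus\{r\}\}$ for some bijection $\sigma : \G\setminus\{r\}\to\G\setminus\{c\}$, and the symbols occurring in $T$ are exactly $\G\setminus\{s\}$. Adding up these symbols in two ways gives $\Sigma - s = (\Sigma - r)+(\Sigma - c)$, hence $s = r+c-\Sigma$. The only entry that could possibly be adjoined to $T$ is $(r,c,L_{rc}) = (r,c,r+c)$, and adjoining it is legitimate iff $r+c = s$; by the identity just derived, this holds iff $\Sigma = 0$. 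Therefore, if the Sylow $2$-subgroup of $\G$ is trivial or non-cyclic then $\Sigma = 0$ and no deficit-$1$ partial transversal of $L$ is maximal; and since every $(n+1)$-cover inducing $G_2$, $G_3$ or $G_5$ contains a maximal deficit-$1$ partial transversal (see the observations near \fref{fig:subgraphs} and \tabref{tab:nplus1PT}), $L$ has no such cover. This establishes the first assertion.

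For the second assertion, assume the Sylow $2$-subgroup is non-trivial and cyclic, so $\Sigma\ne0$. A transversal of $L$ is a bijection $\sigma : \G\to\G$ with $\{i+\sigma(i) : i\in\G\} = \G$; summing over $i$ forces $2\Sigma = \Sigma$, hence $\Sigma = 0$, a contradiction. So $L$ has no transversal, and since deleting one suitable entry from any $(n+1)$-cover inducing $G_4$ yields a transversal, $L$ has no $(n+1)$-cover inducing $G_4$.

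I do not anticipate a genuine obstacle here. Apart from elementary bookkeeping with sums of symbols, the only real content is the evaluation of $\Sigma$ in the first step, which is classical (it is, in effect, Paige's criterion for the existence of complete mappings of abelian groups). The only points requiring care are the routine reduction to $L_{ij} = i+j$ via isotopism invariance, and the accurate use of the earlier part of the paper to the effect that $(n+1)$-covers of types $G_2$, $G_3$, $G_5$ always contain a maximal near-transversal while those of type $G_4$ always contain a transversal.
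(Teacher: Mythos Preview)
Your proposal is correct and follows essentially the same route as the paper: both reduce everything to the evaluation of the group sum $\Sigma$, derive $s=r+c-\Sigma$ for a deficit-$1$ partial transversal by summing symbols two ways, and conclude maximality fails exactly when $\Sigma=0$; the transversal impossibility when $\Sigma\ne0$ and the passage to the $G_i$ statements are likewise identical in spirit. The only difference is cosmetic: you spell out the evaluation of $\Sigma$ and the no-transversal argument explicitly, whereas the paper cites both as well known.
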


\begin{proof}
  Let $X_\G$ denote the sum of the elements of $\G$. It is well-known
  (see, for example, \cite[p.\,9]{DKII}) that $X_\G$ is the identity if
  the Sylow $2$-subgroups of $\G$ are trivial or non-cyclic and is
  otherwise equal to the unique element of order $2$ in $\G$. In the
  latter case there are no transversals in $L$ (\cite[p.\,8]{DKII}) as
  claimed, so we concentrate on the former case. Suppose $T$ is a
  partial transversal of deficit $1$ in $L$ and that $r$, $c$ and $s$ are
  respectively the row, column and symbol that are not represented in
  $T$. Then $-s=X_\G-s=(X_\G-r)+(X_\G-c)=-r-c$ because $L$ is the Cayley
  table of $\G$. As $s=r+c$, we see immediately that $T$ is not
  maximal, from which the result follows.
  % \qed
\end{proof}

\tabref{tab:Zn-q_i} gives the value of $q_i$ for the Cayley table of $\mathbb{Z}_n$.  The zeroes in \tabref{tab:Zn-q_i} are all explained by \lref{l:grpG1-5}, except that $q_5=0$ in $\Z_6$, which may just be a small order quirk.

\begin{table*}[ht]\centering
\begin{tabular}{@{}lrrrrrr@{}}\toprule
& $q_1$ & $q_2$ & $q_3$ & $q_4$ & $q_5$ & All\\ \midrule
$\mathbb{Z}_5$ & 100 & 0 & 0 & 300 & 0 & 400 \\
$\mathbb{Z}_6$ & 144 & 864 & 864 & 0 & 0 & 1\,872\\
$\mathbb{Z}_7$ & 3528 & 0 & 0 & 5\,586 & 0 & 9\,114 \\
$\mathbb{Z}_8$ & 7424 & 27\,648 & 9\,216 & 0 & 1\,024 & 45\,312 \\
$\mathbb{Z}_9$ & 115\,668 & 0 & 0 & 145\,800 & 0 & 261\,468 \\
$\mathbb{Z}_{10}$ & 326\,400 & 864\,000 & 249\,600 & 0 & 9\,600 & 1\,449\,600 \\ 
$\mathbb{Z}_{11}$ & 4\,692\,380 & 0 & 0 & 4\,163\,610 & 0 & 8\,855\,990 \\
\bottomrule
\end{tabular}
\caption{\label{tab:Zn-q_i}The number $q_i$ of $(n+1)$-covers of $\mathbb{Z}_n$ that induce $G_i$.}
\end{table*}

The maximal partial transversal highlighted in the Latin square
\begin{center}
\begin{tikzpicture}
\matrix[square matrix]{
|[fill=blue!50]| 0 & |[fill=white]| 3 & |[fill=white]| 4 & |[fill=white]| 5 & |[fill=white]| 2 & |[fill=white]| 1 \\
|[fill=white]| 3 & |[fill=blue!50]| 1 & |[fill=white]| 0 & |[fill=white]| 4 & |[fill=white]| 5 & |[fill=white]| 2 \\
|[fill=white]| 5 & |[fill=white]| 4 & |[fill=blue!50]| 2 & |[fill=white]| 0 & |[fill=white]| 1 & |[fill=white]| 3 \\
|[fill=white]| 4 & |[fill=white]| 2 & |[fill=white]| 1 & |[fill=blue!50]| 3 & |[fill=white]| 0 & |[fill=white]| 5 \\
|[fill=white]| 2 & |[fill=white]| 5 & |[fill=white]| 3 & |[fill=white]| 1 & |[fill=blue!50]| 4 & |[fill=white]| 0 \\
|[fill=white]| 1 & |[fill=white]| 0 & |[fill=white]| 5 & |[fill=white]| 2 & |[fill=white]| 3 & |[fill=white]| 4 \\
};
\end{tikzpicture}
\end{center}
is contained in exactly $9$ (necessarily minimal) $(n+1)$-covers that do not induce $G_2$.  We also saw during the proof of \tref{th:pmaxbound} that all maximal partial transversals of deficit $1$ are contained in at least $3$ (necessarily minimal) $(n+1)$-covers that do not induce $G_2$. 
These observations present some obstacles to improving the bounds given in \eref{e:pmaxbnds}.

We also observe that in a general Latin square, \eqref{eq:pmax} implies that $q_2 \equiv 0 \pmod 3$. Akbari and Alipour~\cite{AkbariAlipour} showed that $\pmax \equiv 0 \pmod 4$. Thus, $2q_2 \equiv q_3+q_5 \pmod 4$ by \eqref{eq:pmax}.  We also note $q_4=n(n-1)t$, where $t$ is the number of transversals, and $t$ is even when the order $n$ is even \cite{Balasubramanian1990}.  Also, we can delete an entry from any $(n+1)$-cover of type $G_3$ (when $n \geq 5$) or type $G_5$ (when $n \geq 4$) and add another entry to obtain an $(n+1)$-cover of type $G_2$, implying that if $q_2=0$, then $q_3=q_5=0$ (which occurs for the odd-order cyclic group tables).

% becky: why this doesn't work for d=2: (a) if we look at max-pt(2), then (n+1)-covers may contain varying numbers of these depending on structure; (b) if we look at not-necessarily-max-pt(2), then they embed in varying numbers of (n+1)-covers.

The question of which entries within Latin squares belong to transversals has also been studied. The parallel topic for covers plays a role throughout this paper, so we mention the following theorem, which can be derived from \cite{EW12} and \cite{WW06}.

\begin{theo}\label{t:notinmin}
For every $n\ge5$, there exists a Latin square of order $n$ that has transversals, but also has an entry that is not in any transversal.  Consequently, for every $n\ge5$, there exists a Latin square of order $n$ that contains an entry that is not in any minimum cover nor in any partial transversal of minimum deficit.
\end{theo}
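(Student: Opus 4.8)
The plan is to split the statement into its two sentences and handle them in order. The first sentence is really a matter of citing external constructions: Wanless and Webb~\cite{WW06} produce, for a wide range of orders, a Latin square of order $n$ that has a transversal yet also has an entry lying in no transversal, and Egan and Wanless~\cite{EW12} provide squares with this property for the remaining orders, so that between them every $n\ge5$ is covered. So the first step is to extract from \cite{WW06} and \cite{EW12} the precise assertion: for every $n\ge5$ there is a Latin square $L$ of order $n$ possessing at least one transversal and also possessing an entry $\mathbf{e}$ that is in no transversal. If a cited construction were to supply a square with such a bad entry but without a guaranteed transversal, one could instead pass to a suitable conjugate or take a direct product with a small square that has a transversal, arranging that $\mathbf{e}$ survives; but I expect the cited results already give exactly what is needed.

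For the second sentence, fix such an $L$ and such an entry $\mathbf{e}$. Since $L$ has a transversal, the minimum deficit of a partial transversal of $L$ is $0$, so ``partial transversal of minimum deficit'' is synonymous with ``transversal''; hence $\mathbf{e}$ lies in no partial transversal of minimum deficit. By \tref{t:partransinmincov}(1), the minimum size of a cover of $L$ is $n$. Furthermore, every $n$-cover is a transversal: this is the $a=0$ case of \tref{th:CovertoPT}, and it can also be seen directly because an $n$-subset of $E(L)$ meets the $3n$ lines in at most $3n$ incidences, so representing all of them forces each line to be represented exactly once, i.e.\ the $n$ entries pairwise disagree in every coordinate. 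Consequently the minimum covers of $L$ are precisely its transversals, and so $\mathbf{e}$ lies in no minimum cover. This completes the proof.

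The work all lies in the first step: one must confirm that \cite{WW06} and \cite{EW12} together realise every order $n\ge5$ while also keeping a transversal present in the constructed square. The second step is a short consequence of \tref{t:partransinmincov} together with the counting observation that an $n$-cover is a transversal, so I anticipate no difficulty there.
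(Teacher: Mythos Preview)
Your proposal is correct and follows essentially the same approach as the paper: the paper simply states that the theorem ``can be derived from \cite{EW12} and \cite{WW06}'', and you have spelled out exactly that derivation. The connecting argument you give for the second sentence---that in a square possessing a transversal the minimum covers are precisely the transversals, so an entry missing from every transversal is automatically missing from every minimum cover and from every partial transversal of minimum deficit---is the intended one.
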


% \begin{proof}
% If a Latin square has a transversal, then % transversals, minimum covers and partial transversal of minimum deficit are all the same thing.
% By \cite{EW12} and \cite{WW06} there exists a Latin square of each order
% $n\ge5$ that has transversals, but also has an entry that is not in any
% transversal. Such an entry is not in any minimum cover nor in any partial
% transversal of minimum deficit.
% \qed
% \end{proof}

% [[becky: I feel cheated by the wording of this theorem---can we either (a) prove it when the minimum deficit is $1$, or (b) rephrase it (e.g.\ get rid of the theorem statement, just write the proof as a paragraph)?]]

%[[becky: uses unnecessary ``hammers'' ... imw: That are making a general point, and not just some kind of accident!]] 

\tref{t:notinmin} does not extend to any order $n\le4$ since all Latin squares of those orders are isotopic to the Cayley table of a group. Such Latin squares have autotopism groups that act transitively on entries, and hence every entry will be in a partial transversal of minimum deficit and also every entry will be in a minimum cover. 
%\tref{t:notinmin} does not extend to any order $n\le4$ since all Latin squares of those orders either have no transversals, or any entry in them belongs to some transversal.  

Consider a Latin square of $L$ in which the minimum deficit of a partial transversal is $d$. By \tref{t:partransinmincov}, every entry of $L$ that is in a partial transversal of deficit $d$ is also in a minimum cover. It is not clear if the converse holds when $d$ is odd (although \tref{t:partransinmincov} shows the converse holds when $d$ is even).  There is no known Latin square of order $n \geq 2$ that has an entry that is not in a partial transversal of deficit $1$. If this property holds in general, then every entry that is in a minimum cover is also in a partial transversal of minimum deficit.

To finish this section, we observe that if a Latin square $L$ of order $n \geq 5$ has a transversal, then any entry in $L$ belongs to a minimal $(n+1)$-cover.  Therefore, the entries in \tref{t:notinmin} that are not in minimum covers do belong to minimal covers of size 1 larger than minimum.

\begin{theo}\label{th:nplusone}
If a Latin square $L$ of order $n \geq 5$ has a transversal $T$, then each entry of $L$ belongs to some minimal $(n+1)$-cover.
\end{theo}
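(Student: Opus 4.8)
The plan rests on a clean criterion for minimality of $(n+1)$-covers. Since $n+1$ entries cover the $3n$ lines with exactly three line-representations repeated, and two lines of the same type cannot both be repeated, every $(n+1)$-cover doubly covers exactly one row $R^\ast$, one column $C^\ast$ and one symbol $S^\ast$; the three edges of the induced subgraph (\fref{fig:subgraphs}) are the pairs of entries on $R^\ast$, on $C^\ast$ and on $S^\ast$. These three edges share a common vertex --- making the subgraph $G_4$, hence the cover non-minimal --- if and only if $(R^\ast,C^\ast,S^\ast)$ is one of the $n+1$ entries; otherwise the subgraph is one of $G_1,G_2,G_3,G_5$ and the cover is minimal. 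So the cover is minimal exactly when its ``doubled triple'' $(R^\ast,C^\ast,S^\ast)$ is not an entry of it, and the goal is, through the given entry $\mathbf e=(r,c,s)$, to build an $(n+1)$-cover with this property. After an isotopism we may assume $T=\{(i,i,i):i\in\Z_n\}$, and then $\mathbf e\notin T$ precisely when $r,c,s$ are pairwise distinct.

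Both cases use the same device: produce a deficit-$2$ partial transversal $P$ containing $\mathbf e$, then extend $P$ to an $(n+1)$-cover by adjoining three entries that jointly cover the six lines omitted by $P$, as in the proof of \tref{th:PTtoCover}; every such extension contains $\mathbf e$, and its doubled triple can be written down explicitly. If $\mathbf e=(i_0,i_0,i_0)\in T$, I would relabel so $\mathbf e=(0,0,0)$ and take $P=T\setminus\{(a,a,a),(b,b,b)\}$ for distinct $a,b\in\{1,\dots,n-1\}$ (using $n\ge5$, so $n-1\ge4$, to have room); the only admissible extensions adjoin the entry at cell $(a,b)$ or at cell $(b,a)$ together with two forced entries, giving covers $\cov_1(a,b)$ and $\cov_2(a,b)=\cov_1(b,a)$, and a direct computation shows that the doubled triple of $\cov_1(a,b)$ is one of its entries exactly when $L_{L_{ab},\,a}=b$ and $L_{b,\,L_{ab}}=a$. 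If $\mathbf e\notin T$, let $\mathbf t_R,\mathbf t_C,\mathbf t_S$ be the (distinct) entries of $T$ sharing the row, column and symbol of $\mathbf e$, and take $P=(T\setminus\{\mathbf t_R,\mathbf t_C,\mathbf t_S\})\cup\{\mathbf e\}$; here $P$ is forced, but several admissible pairings of its omitted lines are available, and each resulting cover again contains $\mathbf e$ and has an explicitly computable doubled triple.

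The remaining step --- and the one I expect to be the real work --- is to show that for some admissible choice the doubled triple is not among the cover's entries. For $\mathbf e\in T$ this amounts to ruling out the possibility that $L_{L_{ab},a}=b$ and $L_{b,L_{ab}}=a$ hold for every ordered pair of distinct $a,b\in\{1,\dots,n-1\}$; these equalities force $L$ to carry a rigid structure --- a fixed-point-free permutation $\phi$ with $L_{a0}=\phi(a)$, $L_{0c}=\phi^{-1}(c)$ and $L_{a,\phi^{-1}(a)}=0$, together with a dense system of cyclically closed triples on the indices $1,\dots,n-1$ --- and one checks (already verifiable by hand for $n=5$) that this is incompatible with $L$ being a Latin square when $n\ge5$, so some $\cov_i(a,b)$ is minimal. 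For $\mathbf e\notin T$ the bookkeeping is similar but lighter, since only the pairing varies and there are only finitely many pairings to eliminate. It is worth noting that the argument must not assume the existence of $(n+1)$-covers inducing $G_2$, $G_3$ or $G_5$: by \lref{l:grpG1-5} the Cayley table of an elementary abelian $2$-group has no maximal partial transversal of deficit $1$, hence no such covers, so all of its minimal $(n+1)$-covers are of type $G_1$; this is consistent with the construction above, whose covers have their three doubled lines spread over the three line-types and need contain no near-transversal.
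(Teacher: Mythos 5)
Your reduction is sound up to the last step: the ``doubled triple'' criterion for minimality of an $(n+1)$-cover is correct, and your computation that $\cov_1(a,b)$ fails to be minimal exactly when $L_{L_{ab},a}=b$ and $L_{b,L_{ab}}=a$ agrees with the condition $c=c'=c''$ used in the paper's proof. The genuine gap is the claim you flag as ``the real work'': it is \emph{false} that the simultaneous validity of $L_{L_{ab},a}=b$ and $L_{b,L_{ab}}=a$ for all distinct $a,b\in\{1,\dots,n-1\}$ is incompatible with $L$ being a Latin square for $n\ge5$. Any Steiner quasigroup gives a counterexample: writing $a*b=L_{ab}$, it is idempotent (so the diagonal is a transversal) and satisfies $(a*b)*a=b$ and $b*(a*b)=a$ for \emph{all} distinct $a,b$, and such squares exist for every $n\equiv1,3\pmod 6$ with $n\ge7$. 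Concretely, for the order-$7$ Steiner quasigroup and $\mathbf e=(0,0,0)$, every one of your covers $\cov_1(a,b)$, $\cov_2(a,b)$ induces $G_4$: if $c=a*b\neq0$ the redundant entry is $(c,c,c)$, and if $a*b=0$ the redundant entry is $\mathbf e$ itself. So in your diagonal case no admissible extension of $T\setminus\{(a,a,a),(b,b,b)\}$ is ever minimal, and the ``rigid structure'' you hoped to refute (a dense system of cyclically closed triples) is precisely a Steiner-type structure that does occur.

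Repairing this requires leaving your search space of covers that contain $n-3$ diagonal entries together with $\mathbf e$, which is exactly what the paper's proof does: after two attempts of your type fail (forcing the configuration with symbols $1,i,j,k,\ell$), it deletes \emph{five} diagonal entries and adds six off-diagonal entries to produce a minimal $(n+1)$-cover through $(0,0,0)$; it also needs $n\ge7$ to choose the auxiliary symbol $k$, with $n\in\{5,6\}$ settled by computer search. Your other case (entries not on $T$, or in the paper's formulation entries on no transversal) does go through essentially as in the paper, but as written the diagonal case --- the crux of the theorem --- is not proved, and no amount of varying the pair $(a,b)$ or the pairing of uncovered lines within your fixed deficit-$2$ partial transversals can fix it.
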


\begin{proof}
A computer search reveals that any entry in any Latin square of order $n \in \{5,6\}$ belongs to a minimal $(n+1)$-cover.  Now assume $n \geq 7$.  By applying an isotopism, we may assume that $T=\{(i,i,i) : i \in \mathbb{Z}_n\}$. Let $(a,b,c)$ be an arbitrary entry of $L$.

First, we consider the case when no transversal contains $(a,b,c)$ (implying that $a \neq b$). Consider 
\[\cov= \left( T \setminus \{(a,a,a),(b,b,b)\} \right) \cup \{(a,b,c),(b,c',a),(c'',a,b)\}.
\]
Now, $\cov$ is a clearly a cover of $L$, and is minimal unless $c = c' = c''$ leaving $(c,c,c)$ as a redundant entry. However, if $c = c' = c''$, then $\cov \setminus \{(c,c,c)\}$ would be a transversal containing $(a,b,c)$, which we assumed did not exist, so $\cov$ must be a minimal $(n+1)$-cover containing $(a,b,c)$.

Now we may assume that $(a,b,c) \in T$ and that $a = b = c = 0$. Let $i$ be such that $i \not\in \{0,1\}$ and $L_{1i} \neq 0$. Consider
$$\cov_1 = \left( T \setminus \{(1,1,1),(i,i,i)\} \right) \cup \{(1,i,j),(i,j',1),(j'',1,i)\}.$$
By a similar argument as before, if $j$, $j'$ and $j''$ are not all the same, then $\cov_1$ is a minimal $(n+1)$-cover containing $(a,b,c)$. If $j = j' = j''$, then note that $j \not\in \{0,1,i\}$, and then let $k$ be such that $k \not\in \{0,1,i,j\}$ and $L_{1k} \not\in \{0,i\}$ (this choice of $k$ is possible since $n \geq 7$). Consider
$$\cov_2 = \left( T \setminus \{(1,1,1),(k,k,k)\} \right) \cup \{(1,k,\ell),(k,\ell',1),(\ell'',1,k)\}.$$
By a similar argument as before, if $\ell$, $\ell'$ and $\ell''$ are not all the same, then $\cov_2$ is a minimal $(n+1)$-cover containing $(a,b,c)$. If $\ell = \ell' = \ell''$, then note that $\ell \not\in \{0,1,j,k\}$ and $L$ must have the following structure:
\[
\begin{array}{c}
\begin{tikzpicture}
\fill[blue!50] (-3*13pt,2*15pt) -- (-3*13pt,3*15pt) -- (-2*13pt,2*15pt) -- cycle;
\fill[pattern=crosshatch, pattern color=red!50]  (-2*13pt,3*15pt) -- (-3*13pt,3*15pt) -- (-2*13pt,2*15pt) -- cycle;

\matrix[square matrix]{
$0$ &|[fill=white  ]|     &|[fill=white  ]|     &|[fill=white]|     &|[fill=white]|     &|[fill=white]|  \\
|[fill=white]|       &|[pattern=crosshatch, pattern color=red!50]| $1$ &|[fill=blue!50]| $j$ &|[fill=white]|     &|[fill=blue!50]| $\ell$ &|[fill=white]| \\
|[fill=white]|       &|[fill=white  ]|     &|[pattern=crosshatch, pattern color=red!50]| $i$ &|[fill=blue!50]| $1$ &|[fill=white]|     &|[fill=white]| \\
|[fill=white]|       &|[fill=blue!50]| $i$ &|[fill=white  ]|     &|[pattern=crosshatch, pattern color=red!50]| $j$ &|[fill=white]|     &|[fill=white]| \\
|[fill=white]|       &|[fill=white  ]|     &|[fill=white  ]|     &|[fill=white]|     &|[pattern=crosshatch, pattern color=red!50]| $k$ &|[fill=blue!50]| $1$ \\
|[fill=white]|       &|[fill=blue!50]| $k$ &|[fill=white  ]|     &|[fill=white]|     &|[fill=white]|     &|[pattern=crosshatch, pattern color=red!50]| $\ell$ \\
};
\end{tikzpicture}
\end{array}.
\]
By removing the entries containing the symbols $1,i,j,k$ and $\ell$ from $T$ and adding the shaded entries, we have a minimal $(n+1)$-cover containing $(a,b,c)$.
% \qed
\end{proof}

\tref{th:nplusone} does not hold for orders $n \in \{1,3,4\}$ as the Latin squares of those orders that have transversals do not have minimal $(n+1)$-covers (and \tref{th:nplusone} is vacuously true when $n=2$).

\section{Large minimal covers}\label{s:maxmincov}

In this section, we consider the question of how large a minimal cover in a Latin square of order $n$ can be.

When $n \geq 3$, a transversal (which has size $n$) is the smallest minimal cover possible in a Latin square of order $n$. The size of the largest minimal cover is harder to establish. It is clear that it cannot be larger than size $3n$, since there are only $3n$ lines and each entry in a minimal cover uniquely represents at least one line. Perhaps surprisingly, this is close to the true answer. We will show that every Latin square of order $n$ has a minimal cover with size asymptotically 
equal to $3n$ as $n\rightarrow\infty$.

%The dual problem of extremal sizes of maximal partial transversals is easily solved. The largest maximal partial transversal is clearly a transversal. The smallest maximal partial transversals have deficit $n/2$, and are obtained by taking a transversal of a subsquare of order $n/2$ within a Latin square of order $n$.

To work towards finding the size of the largest minimal covers, we begin with a simple observation.

\begin{lemm}\label{l:trivcov}
Every Latin square $L$ of order $n \geq 1$ contains a minimal cover of size $2n-1$.  Furthermore, any entry of $L$ belongs to a minimal cover of size $2n-1$.
\end{lemm}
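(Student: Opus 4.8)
The plan is to exhibit, for an arbitrary prescribed entry, an explicit minimal cover of size $2n-1$ through it; this proves both sentences of the lemma at once, since the first follows from the second because $E(L)\neq\varnothing$.

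Fix an arbitrary entry $(a,b,c)\in E(L)$, so that $c=L_{ab}$. Let $R=\{(a,j,L_{aj}):j\in\Z_n\}$ be the row-$a$ line and $C=\{(i,b,L_{ib}):i\in\Z_n\}$ the column-$b$ line, and set $\cov=R\cup C$. The lines $R$ and $C$ meet only in the entry $(a,b,L_{ab})$, so $|\cov|=2n-1$. To see that $\cov$ is a cover, I would argue: the line $C$ contains exactly one entry in each row, so all $n$ rows are represented; symmetrically $R$ represents all $n$ columns; and $R$, being a row of $L$, contains every symbol exactly once, so all $n$ symbols are represented. Hence $\cov$ is a $(2n-1)$-cover, and it contains $(a,b,c)$.

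The substantive step is minimality, for which I would check that every entry of $\cov$ uniquely represents some line. The entry $(a,b,L_{ab})$ is the only entry of $\cov$ with symbol $L_{ab}$, because that symbol occurs exactly once along the row $R$ and exactly once along the column $C$, each time at position $(a,b)$; so it is not redundant. For $j\neq b$, the entry $(a,j,L_{aj})$ is the only entry of $\cov$ in column $j$ (all of $C$ lies in column $b$), so it is not redundant. Symmetrically, for $i\neq a$, the entry $(i,b,L_{ib})$ is the only entry of $\cov$ in row $i$. Thus no entry of $\cov$ can be deleted while keeping a cover, i.e.\ $\cov$ is minimal, and it contains the arbitrary entry $(a,b,c)$, which is exactly what we wanted.

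There is essentially no obstacle to overcome here; the only point that uses more than counting is the non-redundancy of the ``corner'' entry $(a,b,L_{ab})$, where one invokes the defining Latin property that each symbol appears once per line. The construction is uniform in the chosen entry, so no appeal to isotopy is needed, and for $n=1$ the cover is simply the unique entry of $L$, so the statement holds trivially.
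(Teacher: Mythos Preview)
Your proof is correct and takes essentially the same approach as the paper: form the union of the row and column through the chosen entry, observe it has size $2n-1$, and verify minimality by noting that the corner entry uniquely represents its symbol while every other entry uniquely represents its column (if in the chosen row) or its row (if in the chosen column). The paper's argument is just a terser version of yours.
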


\begin{proof}
Take all entries that are in the $r$-th row and/or in the $c$-th column.  This gives a set of $2n-1$ entries in which every line is represented.  The entry $(r,c,L_{rc})$ uniquely represents its symbol. The other entries in row $r$ uniquely represent their respective columns, and the other entries in column $c$ uniquely represent their respective rows. Hence the cover is minimal.
% \qed
\end{proof}

We consider a more general problem that will be easier to deal with.  If an $n \times n$ partial Latin square on the symbol set $\mathbb{Z}_n$ has each row, column and symbol represented at least once, we call it a \emph{potential cover} of order $n$.  By definition, a cover admits a completion to a Latin square, whereas not all potential covers admit a completion.  \fref{fi:cover_potent} gives an example of two potential covers, one of which is a cover.  A potential cover $\cov$ of order $n$ is \emph{minimal} if, for all $\mathbf{e} \in \cov$, the set $\cov \setminus \{\mathbf{e}\}$ is not a potential cover of order $n$.  We will bound the maximum size of minimal potential covers, thereby giving an upper bound on the cardinality of minimal covers.

% We describe a minimal potential cover of order $n$ as \emph{maximum} if it has the largest cardinality among all minimal potential covers of order $n$.

\begin{figure}[htp]
\centering
$\begin{array}{ccc}
\begin{array}{c}
\begin{tikzpicture}
\matrix[square matrix]{
|[fill=blue!50]| 0 & |[fill=white]| & |[fill=white]| & |[fill=white]| \\
|[fill=white]| & |[fill=blue!50]| 1 & |[fill=white]| & |[fill=white]| \\
|[fill=white]| & |[fill=white]| & |[fill=blue!50]| 1 & |[fill=blue!50]| 2 \\
|[fill=white]| & |[fill=white]| & |[fill=blue!50]| 3 & |[fill=blue!50]| 1 \\
};
\end{tikzpicture}
\end{array}
&
&
\begin{array}{c}
\begin{tikzpicture}
\matrix[square matrix]{
|[fill=blue!50]| 0 & |[fill=white]| 3 & |[fill=white]| 2 & |[fill=blue!50]| 1 \\
|[fill=white]| 2 & |[fill=blue!50]| 1 & |[fill=white]| 0 & |[fill=white]| 3 \\
|[fill=white]| 3 & |[fill=white]| 0 & |[fill=blue!50]| 1 & |[fill=blue!50]| 2 \\
|[fill=white]| 1 & |[fill=white]| 2 & |[fill=blue!50]| 3 & |[fill=white]| 0 \\
};
\end{tikzpicture}
\end{array}
\end{array}$
\caption{\label{fi:cover_potent}Two potential covers of Latin squares of order $4$.  Only the right potential cover admits a completion to a Latin square of order $4$ (as indicated) and is therefore a cover.}
\end{figure}

Given a potential cover $\cov$, define $\Ur=\Ur(\cov)$ to be the set of all entries that uniquely represent a row but no other line, $\Urc=\Urc(\cov)$ to be the set of all entries that uniquely represent a row and a column but no other line, $\Urcs=\Urcs(\cov)$ to be the set of all entries that uniquely represent a row, column and symbol, and define $\Uc$, $\Us$, $\Urs$ and $\Ucs$ accordingly.  An example is given in \fref{fi:RCSexample}.

If an entry does not uniquely represent a row, column or symbol, then it can be deleted to give a smaller potential cover, i.e., the potential cover is not minimal.  If a potential cover $\cov$ is minimal, then $\{\Ur, \Uc, \Us, \Urc, \Urs, \Ucs,\Urcs\}$ is a partition of $\cov$.

%We can form a minimal cover of size $2n-1$ by taking all cells in the first row and column.  Thus, $|C| \geq 2n-1$.

\begin{figure}[htp]
\centering
$\begin{array}{ccc}
\begin{array}{c}
\begin{tikzpicture}
\matrix[square matrix]{
|[fill=blue!50]| 2 & |[fill=white]| 6 & |[fill=white]| 0 & |[fill=white]| 5 & |[fill=white]| 3 & |[fill=white]| 4 & |[fill=white]| 1 \\
|[fill=white]| 6 & |[fill=blue!50]| 4 & |[fill=white]| 2 & |[fill=white]| 1 & |[fill=white]| 0 & |[fill=white]| 3 & |[fill=white]| 5 \\
|[fill=white]| 1 & |[fill=white]| 2 & |[fill=blue!50]| 3 & |[fill=white]| 0 & |[fill=white]| 6 & |[fill=white]| 5 & |[fill=blue!50]| 4 \\
|[fill=white]| 0 & |[fill=white]| 3 & |[fill=white]| 4 & |[fill=blue!50]| 6 & |[fill=white]| 5 & |[fill=white]| 1 & |[fill=white]| 2 \\
|[fill=white]| 3 & |[fill=blue!50]| 5 & |[fill=white]| 1 & |[fill=white]| 2 & |[fill=white]| 4 & |[fill=white]| 6 & |[fill=white]| 0 \\
|[fill=white]| 5 & |[fill=white]| 0 & |[fill=white]| 6 & |[fill=white]| 4 & |[fill=white]| 1 & |[fill=blue!50]| 2 & |[fill=white]| 3 \\
|[fill=white]| 4 & |[fill=blue!50]| 1 & |[fill=white]| 5 & |[fill=white]| 3 & |[fill=white]| 2 & |[fill=blue!50]| 0 & |[fill=white]| 6 \\
};
\end{tikzpicture}
\end{array}
&
&
\begin{array}{rl}
\Ur= & \{(1,1,4),(5,5,2)\} \\
\Uc= & \{(2,6,4)\} \\
\Us= & \{(6,1,1),(6,5,0)\} \\
\Urc= & \{(0,0,2)\} \\
\Urs= & \{(4,1,5)\} \\
\Ucs= & \{(2,2,3)\} \\
\Urcs= & \{(3,3,6)\} \\
\end{array}
%&
%&
%\begin{array}{c}
%\begin{tikzpicture}[scale=0.55]
%\node[draw,thick,circle,fill=blue!50] at (0,7) (07) {};
%\node[draw,thick,circle,fill=blue!50] at (1,6) (16) {};
%\node[draw,thick,circle,fill=blue!50] at (1,1) (11) {};
%\node[draw,thick,circle,fill=blue!50] at (2,5) (25) {};
%\node[draw,thick,circle,fill=blue!50] at (6,5) (65) {};
%\node[draw,thick,circle,fill=blue!50] at (3,4) (34) {};
%\node[draw,thick,circle,fill=blue!50] at (1,3) (13) {};
%\node[draw,thick,circle,fill=blue!50] at (5,2) (52) {};
%\node[draw,thick,circle,fill=blue!50] at (5,1) (51) {};
%\draw[ultra thick, green!40!black, densely dotted] (25) to (65);
%\draw[ultra thick, green!40!black, densely dotted] (11) to (51);
%\draw[ultra thick, orange] (16) to (13) to (11) to[bend left=20] (16);
%\draw[ultra thick, orange] (52) to (51);
%\draw[ultra thick, purple, dashed] (07) to[bend left=20] (52);
%\draw[ultra thick, purple, dashed] (16) to (65);
%\end{tikzpicture}
%\end{array}
\end{array}$
\caption{\label{fi:RCSexample}Illustrating $\Ur$, $\Urc$, etc.\ for a minimal cover of a Latin square of order $7$.}   
%We also show the subgraph of the corresponding Latin square graph induced by the cover (including the isolated vertex $(3,3,6)$). 
%[[Not induced by our new defn since an isolated vertex is shown]].
%}
\end{figure}

Throughout the next proof, we edit a minimal potential cover $\cov$ by deleting a few entries from it, and adding others, which creates a modified minimal potential cover.  After such edits, to verify the result is indeed a minimal potential cover, we need to check the following three properties:
\begin{enumerate}
 \item \textit{Partial Latin square}.  When adding entries, we must ensure we do not violate the partial Latin square property by adding an entry to an already filled cell, or by adding a symbol to a row or column that already contains that symbol.
 
 \item \textit{Potential cover}.  After deleting entries from a minimal potential cover, we necessarily end up with some rows, columns and/or symbols unrepresented.  These rows, columns and/or symbols must be represented by newly added entries.
 
 \item \textit{Minimality}.  We need to verify that each entry uniquely represents some row, column or symbol.  We need only check this for the newly added entries and any entries that share a row, column or symbol with a newly added entry. This last point is the most subtle: it is easy to overlook that adding an entry might make another entry redundant.
\end{enumerate}
We omit details of such routine checks without further comment.

\begin{lemm}\label{lm:RCSbounds}
Let $n \geq 2$. There exists a minimal potential cover $M$ of order $n$, which is at least as large as all other minimal potential covers of order $n$, and has the following additional properties
\begin{align*}
\Urc&=\Urs=\Ucs=\Urcs=\emptyset,\\
|M| &= |\Ur|+|\Uc|+|\Us|, \\
0<|\Ur| &\leq (n-|\Uc|)(n-|\Us|),\\
0<|\Uc| &\leq (n-|\Ur|)(n-|\Us|), \text{ and}\\
0<|\Us| &\leq (n-|\Ur|)(n-|\Uc|).
\end{align*}
\end{lemm}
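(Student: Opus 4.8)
The plan is to start from an arbitrary minimal potential cover of maximum size and show it can be transformed, without decreasing its size, into one with the stated structure. First I would address the ``collapse'' of the mixed classes $\Urc,\Urs,\Ucs,\Urcs$. The key observation is that an entry $\mathbf{e}=(i,j,k)$ that uniquely represents \emph{two or more} of its lines (say row $i$ and column $j$) can be replaced by two entries: one new entry $(i,j',\unkn)$ witnessing row $i$, where $j'$ is a column already represented at least twice, and one new entry $(\unkn,j,k')$ (or $(i',j,\unkn)$) witnessing column $j$. Because $\mathbf{e}$ uniquely represented row $i$ and column $j$, removing it leaves exactly those lines (plus possibly its symbol) in need of coverage, and the two replacements restore this while each uniquely represents a line. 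This strictly increases $|\cov|$ unless no suitable ``doubly represented'' column/row/symbol exists; I would argue that if $\cov$ has maximum size then in fact the process must already have terminated, i.e.\ $\Urc=\Urs=\Ucs=\Urcs=\emptyset$, because otherwise we could perform the split (a spare over-represented line always exists once the cover is large enough — a counting argument on $3n$ lines versus $|\cov|$ entries). Consequently $M=\Ur\sqcup\Uc\sqcup\Us$ and $|M|=|\Ur|+|\Uc|+|\Us|$.

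Next I would establish the three inequalities. Each is of the same flavour, so consider $|\Ur|\le(n-|\Uc|)(n-|\Us|)$. Every entry of $\Ur$ lies in some row $i$; since it uniquely represents that row, no other entry of $M$ shares row $i$, so distinct entries of $\Ur$ occupy distinct rows, and in particular their rows avoid the rows used by entries of... wait — more carefully: an entry of $\Ur$ sits at some cell $(i,j,k)$ where, since it does \emph{not} uniquely represent column $j$, column $j$ is represented by some \emph{other} entry of $M$, and similarly symbol $k$ is represented by another entry; moreover that other entry representing column $j$ cannot be in $\Uc$ (an entry of $\Uc$ uniquely represents its column, so its column is represented only by it, but column $j$ is represented by our $\Ur$-entry too — contradiction unless they coincide, which they don't since our entry is in $\Ur$ not $\Uc$). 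Hence the column of any $\Ur$-entry is \emph{not} the column of any $\Uc$-entry, so there are at most $n-|\Uc|$ available columns; likewise at most $n-|\Us|$ available symbols; and since the entries of $\Ur$ form a partial Latin square on these restricted columns and symbols with distinct columns... no: they need not have distinct columns. The right bound comes from: an $\Ur$-entry is determined by its (column, symbol) pair because two entries with the same column and symbol would be equal, and its column lies in an $(n-|\Uc|)$-set while its symbol lies in an $(n-|\Us|)$-set, giving $|\Ur|\le(n-|\Uc|)(n-|\Us|)$. The strict positivity $|\Ur|,|\Uc|,|\Us|>0$ follows because \lref{l:trivcov} gives a minimal cover (hence potential cover) of size $2n-1>n$ for $n\ge2$, so $|M|\ge 2n-1$, which forces each of the three parts to be non-empty once we know the mixed parts vanish (if, say, $\Us=\emptyset$ then every entry uniquely represents a row or column but not a symbol, yet some symbol must be represented, contradiction — or more simply, a cover with $\Us=\Urc=\cdots=\emptyset$ has every entry witnessing a row or column only, so it can witness at most... the symbols are covered by entries that each uniquely witness their symbol, impossible). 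I would spell this out using the partition and the fact that all $n$ symbols must be represented.

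The step I expect to be the main obstacle is the reduction to $\Urc=\Urs=\Ucs=\Urcs=\emptyset$ while \emph{preserving maximality and minimality simultaneously}: one must verify that the split operation (replacing one ``double-witness'' entry by two single-witness entries) genuinely produces a minimal potential cover — in particular that no previously non-redundant entry becomes redundant, and that the partial Latin square property survives when inserting the new entries into possibly crowded rows/columns. This is exactly the kind of ``routine check'' flagged in the paragraph preceding the lemma, but getting the bookkeeping right (especially ensuring the replacement entries can be placed and that the over-represented line used for placement exists, which requires $|M|$ large enough, hence the need to run this only on a maximum-size cover) is the delicate part. Once that is in place, the counting inequalities and strict positivity are short.
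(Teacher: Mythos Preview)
Your bound $|\Ur|\le(n-|\Uc|)(n-|\Us|)$ is correct and essentially the paper's argument with coordinates permuted: you observe that a $\Ur$-entry's column cannot coincide with a $\Uc$-column and its symbol cannot coincide with a $\Us$-symbol, then use that (column, symbol) determines an entry in a partial Latin square. The paper phrases the symmetric bound on $|\Us|$ via rows and columns instead, but this is the same. Your positivity sketch is a bit garbled; the clean version (which the paper uses) is that if $\Us=\emptyset$ then $|M|=|\Ur|+|\Uc|<n+n=2n$, since once $|M|\ge2n$ not every row (resp.\ column) can be uniquely represented.

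The genuine gap is in the reduction to $\Urc=\Urs=\Ucs=\emptyset$. You claim that any $\Urc$-entry $(i,j,k)$ can be split into two single-witness entries $(i,j',s')$ and $(i',j,s'')$, strictly increasing $|\cov|$, so that a maximum-size cover must already have $\Urc=\emptyset$. But the split needs more than ``some over-represented line'': you need a column $j'$ represented $\ge2$ times \emph{and} a symbol $s'$ represented $\ge2$ times with $s'$ absent from column $j'$ (else either the partial Latin square property fails or the entry uniquely witnessing $s'$ becomes redundant), and symmetrically for $(i',s'')$. Your ``counting argument on $3n$ lines versus $|\cov|$ entries'' does not deliver this compatible choice; nothing rules out that every doubly-represented symbol already sits in every doubly-represented column you might pick for $j'$.

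The paper does not attempt your stronger assertion that \emph{every} maximum cover has empty mixed classes. It proves only that \emph{some} maximum cover does, by modifying $\cov$. The mechanism is to treat $\Urc$ and $\Us$ as a pair: given $(r_1,c_1,s_1)\in\Urc$ and $(r_0,c_0,s_0)\in\Us$, either one of two easy cases gives a strictly larger cover (contradiction), or else a \emph{size-preserving} swap replaces these two entries by $(r_0,c_1,s_0)\in\Uc$ and $(r_1,c_0,s_0)\in\Ur$, shrinking $|\Urc\cup\Urs\cup\Ucs|$. Iterating empties at least one of each pair $(\Urc,\Us)$, $(\Urs,\Uc)$, $(\Ucs,\Ur)$; the size bound $|M|\ge2n$ then forces $\Ur,\Uc,\Us$ to be the nonempty ones. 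The idea you are missing is this pairing: borrow an entry from $\Us$ as a partner in the swap rather than trying to dismantle the $\Urc$-entry in isolation.
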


\begin{proof} 
We assume that $\cov$ is some minimal potential cover of order $n$ of the largest possible size. If $|\cov|\le 2n-1$, then the cover described in the proof of \lref{l:trivcov} satisfies the required conditions. So we may assume that $|\cov|\ge2n$ (which implies that $n\ge3$).

We first argue that $\Urcs=\emptyset$.  If $(r,c,s) \in \Urcs$, then since $|\cov| \geq 2n$ there  is a row $r' \neq r$ that contains at least two entries in $\cov$ and, similarly, there is some column $c' \neq c$ that contains at least two entries in $\cov$. But then
\begin{equation*}\label{eq:CmodRCS}
\big(\cov \setminus \{(r,c,s)\}\big) \cup \big\{(r',c,s),(r,c',s)\big\}
\end{equation*}
is a larger potential cover, contradicting the choice of $\cov$.  So $\Urcs=\emptyset$.

Next we explain how we can modify $\cov$ in such a way that $\Us$ and/or $\Urc$ becomes empty, without decreasing the size of $\cov$ nor violating the minimal potential cover property.

Suppose that there exists $(r_0,c_0,s_0) \in \Us$ and
$(r_1,c_1,s_1) \in \Urc$. Note that the entries
$(r_0,c_0,s_0)$ and $(r_1,c_1,s_1)$ cannot agree in any coordinate.  We now split into three cases.

\medskip\noindent
\textbf{Case I:} \
Symbol $s_1$ does not appear in row $r_0$ nor in column $c_0$. 

In this case,
\begin{equation*}\label{eq:Cmod1}
\big(\cov \setminus \{(r_1,c_1,s_1)\}\big) \cup \big\{(r_0,c_1,s_1), (r_1,c_0,s_1)\big\}
\end{equation*}
is a larger minimal potential cover than $\cov$, contradicting the choice of $\cov$.

\medskip\noindent
\textbf{Case II:} \  Symbol $s_1$ is represented at most twice in $\cov$.  

Since $(r_1,c_1,s_1) \in \Urc$, we know that $s_1$ must be represented exactly twice in $\cov$. It follows that $s_1$ cannot occur in both row $r_0$ and column $c_0$. Suppose $s_1$ does not occur in row $r_0$ (the case when $s_1$ does not occur in column $c_0$ can be resolved symmetrically). 

% By transposing if necessary, we assume that $s_1$ does not occur in $r_0$.  [[becky: we shouldn't explain it with "transpose"---e.g. (r_1,c_1,s_1) might not be an entry after transposing... this means we'd need to restart... which might need transposing again (possible infinite loop) [it seems we want to transpose the proof, not the Latin square]]]

Since $|\cov|\ge2n$, there exists a column $c' \neq c_0$ that is not uniquely represented in $\cov$. Case I implies $s_1$ occurs in column $c_0$ and hence does not occur in column $c'$. Thus,
\begin{equation*}\label{eq:Cmod2}
\big(\cov \setminus \{(r_1,c_1,s_1)\}\big) \cup \big\{(r_0,c_1,s_1), (r_1,c',s_1)\big\}
\end{equation*}
is a larger potential cover than $\cov$, contradicting the choice of $\cov$.

% To check \eqref{eq:Cmod2} describes a partial Latin square, we observe: (a) column $c_1$ is uniquely represented by $(r_1,c_1,s_1)$, which implies $c' \neq c_1$, (b) entry $(r_1,c_1,s_1)$ is the only entry in row $r_1$ in $C$, and since $c' \neq c_1$, there is no entry of the form $(r_1,c',?)$ in $C$, (c) as a case assumption, symbol $s_1$ is represented exactly twice in $C$, once by entry $(r_1,c_1,s_1)$ and once in column $c_0 \neq c'$, so $C$ contains no entry of the form $(?,c',s_1)$.

% We check that \eqref{eq:Cmod2} describes a (minimal) potential cover the same as in Case Ia, with $c_0$ replaced by $c'$ (which requires the aforementioned Claim).

\medskip\noindent
\textbf{Case III:} \  Symbol $s_1$ is represented at least three times in $\cov$.  

In this case,
\begin{equation}\label{eq:Cmod3}
\big(\cov \setminus \{(r_0,c_0,s_0),(r_1,c_1,s_1)\}\big) \cup \big\{(r_0,c_1,s_0), (r_1,c_0,s_0)\big\}
\end{equation}
is another minimal potential cover, with the same cardinality as $\cov$.
The switching \eqref{eq:Cmod3} removes one entry from each of $\Us$ 
and $\Urc$, and replaces them with new entries in $\Uc$ and 
$\Ur$ respectively.

By iteration, we can reach a point where at least one of $\Us$
and $\Urc$ is empty.  A similar process of switchings allows
us to reach a point where one of $\Ur$ and $\Ucs$ is
empty, and also one of $\Uc$ and $\Urs$ is
empty. We continue this process until at least one set from each pair is empty.
Note that while making switch \eqref{eq:Cmod3}, we will increase
the size of two sets in question. However, no matter which switching we perform,
the number of entries in $\Urc \cup \Urs \cup \Ucs$
decreases, so the process terminates.  Call the resulting minimal
potential cover $M$.
 
Note that $M$ satisfies 
$|\Ur| + |\Urc| + |\Urs| < n$, 
since there are only $n$ rows and not all of them are uniquely
represented. Similarly, 
$|\Uc| + |\Urc| + |\Ucs| < n$. 
If $\Us = \emptyset$, then 
\begin{align*}
|\cov| &= |\Ur| + |\Uc|
+ |\Urc| + |\Urs| + |\Ucs| \\
&\le
(|\Ur| + |\Urc| + |\Urs|) + (|\Uc| + |\Urc| + |\Ucs|)
< 2n,
\end{align*}
which contradicts the assumption that $|\cov| \geq 2n$.
Therefore $\Us\neq \emptyset$. 
By similar arguments, $\Ur \neq \emptyset$ and
$\Uc \neq \emptyset$.  By the deductions above, we have
$\Ucs = \Urs = \Urc = \emptyset$, implying
that $|M|=|\Ur|+|\Uc|+|\Us|$.

The entries in $\Us$ cannot share a row with any entry in
$\Ur$, nor share a column with any entry in $\Uc$, so they
lie in an $(n-|\Ur|) \times (n-|\Uc|)$ submatrix, 
implying that $|\Us| \leq
(n-|\Ur|)(n-|\Uc|)$.  Symmetric results hold for
$\Ur$ and $\Uc$, which completes the proof.
% \qed
\end{proof}

\begin{theo}\label{t:lrgmincov}
Every minimal cover of a Latin square of order $n$ has size at most $\lfloor 3(n+1/2 - \sqrt{n+1/4})\rfloor$. 
%= 3n-3\sqrt{n}+3/2+O(n^{-1/2})$.
\end{theo}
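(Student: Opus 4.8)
The plan is to extract the bound from \lref{lm:RCSbounds} together with a short symmetric optimisation. The first step is to note that every minimal cover of a Latin square $L$ of order $n$ is in particular a minimal potential cover of order $n$: a cover $\cov\subseteq E(L)$ is automatically an $n\times n$ partial Latin square on $\Z_n$ in which every row, column and symbol is represented, and if $\cov$ is minimal as a cover then for each $\mathbf{e}\in\cov$ some line is left unrepresented by $\cov\setminus\{\mathbf{e}\}$, so $\cov\setminus\{\mathbf{e}\}$ is not even a potential cover. Hence it suffices to bound $|M|$, where $M$ is a largest minimal potential cover of order $n$ as supplied by \lref{lm:RCSbounds}.

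Next I would set $x=|\Ur(M)|$, $y=|\Uc(M)|$, $z=|\Us(M)|$. By \lref{lm:RCSbounds} (using $\Urc=\Urs=\Ucs=\Urcs=\emptyset$) we have $|M|=x+y+z$ together with $0<x\le(n-y)(n-z)$, $0<y\le(n-x)(n-z)$, and $0<z\le(n-x)(n-y)$. Put $a=n-x$, $b=n-y$, $c=n-z$, so $|M|=3n-(a+b+c)$ and the constraints become $a+bc\ge n$, $b+ac\ge n$, $c+ab\ge n$. Adding the three constraints gives $(a+b+c)+(ab+bc+ca)\ge 3n$, and since $ab+bc+ca\le\tfrac13(a+b+c)^2$ for all real $a,b,c$, writing $s=a+b+c$ yields $s+\tfrac13 s^2\ge 3n$, i.e. $s^2+3s-9n\ge 0$. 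A minimal (potential) cover has at most $3n$ entries, because each of its entries uniquely represents at least one of the $3n$ rows, columns and symbols; hence $s=3n-|M|\ge 0$, and the quadratic inequality forces $s\ge\tfrac12\bigl(\sqrt{9+36n}-3\bigr)=\tfrac32\bigl(\sqrt{4n+1}-1\bigr)$.

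Finally, $|M|=3n-s\le 3n-\tfrac32\sqrt{4n+1}+\tfrac32=3\bigl(n+\tfrac12-\tfrac12\sqrt{4n+1}\bigr)=3\bigl(n+\tfrac12-\sqrt{n+\tfrac14}\bigr)$, and since $|M|$ is an integer this gives $|M|\le\bigl\lfloor 3(n+\tfrac12-\sqrt{n+\tfrac14})\bigr\rfloor$; combined with the first paragraph, every minimal cover has size at most $\bigl\lfloor 3(n+\tfrac12-\sqrt{n+\tfrac14})\bigr\rfloor$. I expect the only genuinely delicate point to be the bookkeeping in the first paragraph — confirming that "minimal cover of a Latin square" really is a special case of "minimal potential cover of order $n$", so that \lref{lm:RCSbounds} is applicable — since the optimisation is essentially forced once one adds the three product constraints and invokes $ab+bc+ca\le\tfrac13(a+b+c)^2$. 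Equality throughout corresponds to the balanced case $x=y=z$, which also suggests the bound is close to tight.
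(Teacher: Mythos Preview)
Your proof is correct and takes essentially the same approach as the paper: both invoke \lref{lm:RCSbounds}, pass to the variables $a=n-|\Ur|$, $b=n-|\Uc|$, $c=n-|\Us|$, and reduce to the quadratic $s+\tfrac13 s^2\ge 3n$ via the inequality $ab+bc+ca\le\tfrac13(a+b+c)^2$ (equivalently, $x^2+y^2+z^2\ge xy+yz+zx$). The only cosmetic difference is that the paper phrases this step as ``the symmetric point $(\alpha,\alpha,\alpha)$ also satisfies the constraints'' before solving $\alpha+\alpha^2\ge n$, whereas you add the three constraints directly; the algebra is identical.
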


\begin{proof}
Let $x,y,z$ be real numbers from the interval $[0,n]$, and let $\alpha=(x+y+z)/3\in[0,n]$. If $(x,y,z)$ satisfies
\begin{align}
 x+yz &\geq n, \label{eq:ineq1}\\
 y+xz &\geq n \text{ and} \label{eq:ineq2}\\
 z+xy &\geq n, \label{eq:ineq3}
\end{align}
then $(\alpha,\alpha,\alpha)$ also satisfies \eqref{eq:ineq1}--\eqref{eq:ineq3} because
\begin{align*}
\tfrac{1}{3}(x+y+z) + \left( \tfrac{1}{3}(x+y+z) \right)^2 &= \tfrac{1}{9} \left( 3x+3y+3z+x^2+y^2+z^2+2(xy+xz+yz) \right) \\
 & \geq \tfrac{1}{9} \left( 3x+3y+3z+3xy+3xz+3yz \right) \\
 & \geq n,
\end{align*}
where the first inequality holds because $x^2+y^2+z^2 \geq xy+xz+yz$ and the second follows from \eqref{eq:ineq1}--\eqref{eq:ineq3}. Since $\alpha \geq 0$ and $\alpha + \alpha^2 \geq n$, it follows that
\begin{equation}\label{eq:bound-xyz}
 3n - (x+y+z) = 3n - 3\alpha \leq 3\big(n+1/2 - \sqrt{n+1/4}\big).
\end{equation}

Let $M_0$ be an arbitrary minimal cover of a Latin square of order $n$. By \lref{lm:RCSbounds}, there is a minimal potential cover $M$ such that $(n-|\Ur|,n-|\Uc|,n-|\Us|) \in [0,n]^3$ satisfies \eqref{eq:ineq1}--\eqref{eq:ineq3} and $|M| \geq |M_0|$. Thus, by \eqref{eq:bound-xyz}, 
\[
|M_0| \leq |M| = 3n - \big(n-|\Ur|+n-|\Uc|+n-|\Us|\big) \leq 3\big(n+1/2 - \sqrt{n+1/4}\big),
\]
from which the result follows.% \qed
\end{proof}

We note that $-1/2 + \sqrt{n+1/4}$ is a positive integer $t$ when $n=t^2+t$.
%We note that $t:=-1/2 + \sqrt{n+1/4}$ is a positive integer when $n=t^2+t$.  
We next show that the bound in \tref{t:lrgmincov} is achieved for orders $n$ of this form, and therefore, by infinitely many covers.  Moreover, we show that all theoretically possible minimal cover sizes are simultaneously achieved by different covers in a single Latin square of order $t^2+t$.

\begin{lemm} \label{lem:all-covs}
 Let $t\ge2$ and let $L$ be a Latin square of order $n = t^2 + t$ with a transversal $T$ and a minimal cover $\cov$ of size $3t^2$ such that $|\Ur| = |\Uc| = |\Us| = t^2$ and $|\Ur \cap T| = t$. Then $L$ contains a minimal $c$-cover for all $c \in \{t^2+t, \ldots, 3t^2\}$.
\end{lemm}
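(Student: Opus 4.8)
The two extreme sizes are immediate: a transversal is a minimal cover, so $T$ is a minimal $(t^2+t)$-cover, while $\cov$ is a minimal $3t^2$-cover by hypothesis (note that $n=t^2+t$ gives $-1/2+\sqrt{n+1/4}=t$, so $3t^2$ is exactly $\lfloor\mu_n\rfloor$). The plan is to realise every intermediate size by a chain of minimal covers that begins at $\cov$, decreases in size by small amounts, and ends at $T$.

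First I would make the structure of $\cov$ explicit. Since $\cov$ is minimal and $|\cov|=3t^2=|\Ur|+|\Uc|+|\Us|$, the sets $\Urc$, $\Urs$, $\Ucs$ and $\Urcs$ are all empty. Let $R_U$, $C_U$, $S_U$ be the $t^2$ uniquely represented rows, columns and symbols and let $R_M$, $C_M$, $S_M$ be the remaining $t$ of each. Then $\Ur\subseteq R_U\times C_M\times S_M$ and $\Uc\subseteq R_M\times C_U\times S_M$, each row of $R_U$ carrying exactly one $\Ur$-entry and each column of $C_U$ exactly one $\Uc$-entry; and since $|\Us|=t^2=|R_M|\,|C_M|$, the set $\Us$ fills the whole $R_M\times C_M$ subsquare, its symbols being precisely $S_U$. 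Consequently every line in $R_M\cup C_M\cup S_M$ carries exactly $2t\ge4$ entries of $\cov$. I would also record that $T$, being a transversal, meets each of $R_M$, $C_M$, $S_M$ in exactly $t$ entries, and that $|\Ur\cap T|=t$ forces $\Ur\cap T$ to consist of all $t$ entries of $T$ whose symbol lies in $S_M$ (so in particular $\Uc\cap T=\emptyset$); this is the compatibility between $\cov$ and $T$ that the whole argument rests on.

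The basic operation is a \emph{replacement move}: in the current minimal cover, take an entry $\mathbf e\notin T$ that uniquely represents some line $\ell$, delete $\mathbf e$, insert the entry $\mathbf e'$ of $T$ that represents $\ell$, and then delete from the result whichever of the (at most two) further lines through $\mathbf e'$ has become doubly represented via an entry that previously represented only that line. Using the structural facts above one checks that such a move always yields a minimal cover whose size has dropped by $1$ or $2$: deleting an entry from a line of $R_M\cup C_M\cup S_M$ leaves it multiply represented, so no unexpected redundancy appears. Beginning with $\cov$ and applying these moves to the entries of $\Ur$, then of $\Uc$, then of $\Us$ — and leaving the trivial $0$-change moves on the entries of $\cov\cap T$ in reserve — the cover shrinks monotonically, and once every entry of $\cov$ has been handled the cover is exactly $T$, of size $t^2+t$. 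So the total drop over the chain is exactly $2t^2-t=3t^2-(t^2+t)$.

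I expect the real work, and the main obstacle, to be in two intertwined pieces of bookkeeping. \emph{(i) Hitting every size with no gaps.} A replacement move may shorten the cover by $2$, skipping a value; I must show that every such double step can be split into two single steps — by routing through an intermediate entry that shares one coordinate with $S_M$, $C_M$ or $R_M$, so only one redundancy is created at a time — or else order the moves so that a single step is always available until the target size is met. Since the total required drop is precisely $2t^2-t$, this is a matter of counting the moves of each type, and it is here that the constraint $|\Ur\cap T|=t$ (equivalently, how $T$ meets the three $\mathcal U$-blocks) makes the arithmetic close. \emph{(ii) Minimality at every step.} After each move I must run through the partial-Latin-square, covering and minimality checks; the subtle case is when a line of $R_M\cup C_M\cup S_M$ has been thinned to two entries and one more is deleted, so that it becomes uniquely represented — I then have to confirm the surviving entry (which will be the corresponding $T$-entry) is not itself rendered redundant. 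Sequencing the moves so that each line's ``unique versus multiply represented'' status flips at most once, and only late in the process, is what keeps this verification manageable and uniform across the whole chain.
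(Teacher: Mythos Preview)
Your structural analysis is correct and matches the paper's: after observing $\Urc=\Urs=\Ucs=\Urcs=\emptyset$, that $\Us$ fills the $R_M\times C_M$ block with symbols $S_U$, and that $\cov\cap T=\Ur\cap T$, both you and the paper interpolate between $\cov$ and $T$ by inserting diagonal entries of $T$ and deleting the $\Ur$-, $\Uc$- and $\Us$-entries they make redundant.  The paper does this by exhibiting, for each target size $c$, explicit sets $X$ (diagonal entries with row in $R_M$, each producing a net drop of $1$) and $Y$ (diagonal entries with row in $R_U\setminus(\Ur\cap T)$, each producing a net drop of $2$), rather than by chaining moves; this sidesteps your acknowledged bookkeeping obstacle (ii) entirely, and handles the parity issue (i) simply by taking $|X|\in\{t-1,t\}$ according to the parity of $c$.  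Your claim that each line of $R_M\cup C_M\cup S_M$ carries \emph{exactly} $2t$ entries of $\cov$ is not justified and is not needed: the multiplicity is at least $2$, which is all either argument uses.

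There is, however, a genuine gap at $c=n+1=t^2+t+1$ which your chain cannot bridge.  Any move of the type you describe that ends at $T$ must have come from a cover of size $t^2+t+2$ (the last inserted $T$-entry has row in $R_U$, column in $C_U$ and symbol in $S_U$, so it kills three entries at once), and conversely any cover obtained from $T$ by adding a single non-$T$ entry is non-minimal: the added entry is itself redundant, since its row, column and symbol are already each uniquely represented by distinct $T$-entries.  So no minimal $(n+1)$-cover lies on your chain, and your proposed ``split the double step'' fix does not apply here.  The paper does not attempt to reach $c=t^2+t+1$ by this interpolation at all; instead it invokes an earlier result (their \tref{th:nplusone}) which shows that any Latin square of order $n\ge5$ possessing a transversal has a minimal $(n+1)$-cover.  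You need either to quote that result or to supply a separate construction for this one value of $c$.
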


\begin{proof}
Since $|\Ur| = t^2$, all elements in $\Uc \cup \Us$ must be contained in $t$ rows. Similarly, $\Ur \cup \Us$ must be contained in $t$ columns, and thus, $\Us$ is a $t \times t$ submatrix. We now argue that $\cov\cap T=\Ur\cap T$. Note that $\Us \cap T = \emptyset$ since $\Ur \cup \Us$ is contained in $t$ columns and $|\Ur \cap T| = t$. Similarly, $\Uc \cap T = \emptyset$ since at most $n-|\Us|=t$ distinct symbols occur in $\Ur \cup \Uc$ and $|\Ur \cap T| = t$. 
Permute the rows, columns and symbols of $L$ in such a way that (a)~$T = \{(i,i,i) : 0 \leq i < t^2 + t\}$, (b)~the entries in $\Us$ comprise the bottom-left $t \times t$ submatrix, and (c)~the symbol in the bottom-left entry is $t^2-1$ (this simplifies Case III below).
Thus, $L$ has the following structure:

$$
\begin{array}{c}
\begin{tikzpicture}
 \draw (0,0) -- (0,6) -- (6,6) -- (6,0) -- cycle;
 
 \draw[ultra thick,rounded corners,purple] (0+0.03,0+0.03) rectangle (1.5-0.03,1.5-0.03);
 \draw[ultra thick,rounded corners,green!50!black] (1.5+0.03,0+0.03) rectangle (6-0.03,1.5-0.03);
 \draw[ultra thick,rounded corners,blue] (0+0.03,1.5+0.03) rectangle (1.5-0.03,6-0.03);

 \node at (0.75,0.75) {\color{purple} $\Us$};
 \node at (3.75,0.75) {\color{green!50!black} $\Uc$};
 \node at (0.75,3.75) {\color{blue} $\Ur$};

 \draw[ultra thick,rounded corners,red] (6-0.03,0+0.03) -- (0+0.03,6-0.03);

 \draw [decorate, decoration={brace,amplitude=10pt}]  (1.5-0.05,-0.15) -- (0+0.05,-0.15) node [midway,yshift=-1.5em] {\footnotesize $t$};
 \draw [decorate, decoration={brace,amplitude=10pt}]  (6-0.05,-0.15) -- (1.5+0.05,-0.15) node [midway,yshift=-1.5em] {\footnotesize $t^2$};

 \draw [decorate, decoration={brace,amplitude=10pt}]  (-0.15,0+0.05) -- (-0.15,1.5-0.05) node [midway,xshift=-1.5em] {\footnotesize $t$};
 \draw [decorate, decoration={brace,amplitude=10pt}]  (-0.15,1.5+0.05) -- (-0.15,6-0.05) node [midway,xshift=-1.5em] {\footnotesize $t^2$}; 

\draw [thick,densely dotted,stealth-] (3.1,3.1) to[bend left=5] (6.5,2.5) node[right] (blah) {$T$};

\end{tikzpicture}
\end{array}
$$

Clearly, $T$ itself provides a minimal $(t^2+t)$-cover, and we also know that $L$ has a minimal $(t^2+t+1)$-cover by \tref{th:nplusone}. 
For $c\in \{t^2+t+2, \ldots, 3t^2\}$, we break into 3 cases. In each of these cases, a set of entries from $T$ is added to $\cov$ and then entries that have become redundant are removed. For each entry added that is not in the first $t$ columns nor last $t$ rows, three redundant entries will be removed (one from each of $\Ur$, $\Uc$ and $\Us$). These entries correspond to the set $Y$ below. For each entry added in the last $t$ rows, two redundant entries will be removed (one from each of $\Uc$ and $\Us$). These entries correspond to the set $X$ below.

The $3t$ lines that are not uniquely represented by $\cov$ are (a)~the first $t$ columns, (b)~the last $t$ rows and (c)~the symbols in $\Ur \cap T$. In all cases the modifications that we make leave $\Ur \cap T$ in the resulting cover, so the lines in (a) and (c) will still be represented. The representatives of the last $t$ rows will be addressed in each case. The other checks required to show that the resulting set of entries is a minimal $c$-cover are straightforward and will be omitted. If $Z \subseteq \mathbb{Z}_n$, we define $\Vr(Z) = \{ (i,\unkn,\unkn) \in \Ur : i \in Z \}$, and we define $\Vc$ and $\Vs$ similarly. Whenever we use this notation, the elements in $Z$ will be in one-to-one correspondence with elements of $\Vr$ (similarly for $\Vc$ or $\Vs$).

In each case,
$$\cov(X,Y) =
\big( \cov \cup \left\{ (i,i,i) : i \in X \cup Y \right\}  \big)
\setminus
\big( \Vc(X) \cup \Vs(X) \cup \Vr(Y) \cup \Vc(Y) \cup \Vs(Y) \big).
$$ will be a minimal cover of the appropriate size. Note that in each case, $|\Vc(X) \cup \Vs(X) \cup \Vr(Y) \cup \Vc(Y) \cup \Vs(Y)| = 2|X| + 3|Y|$ and $|\cov(X,Y)| = 3t^2 - |X| - 2|Y|$.

\medskip\noindent
\textbf{Case I:} \
$c \in \{3t^2-t+1, \dots, 3t^2\}$. Define $X = \{t^2,\dots,t^2+(3t^2-c)-1\}$ (with $X = \emptyset$ if $c = 3t^2$) and $Y = \emptyset$.
Note that since $|X|+2|Y| < t$, the elements of $\cov(X,Y)$ in the bottom-left $t \times t$ submatrix cover the last $t$ rows of $L$. Thus, $\cov(X,Y)$ is a minimal $c$-cover.

\medskip\noindent
\textbf{Case II:} \
$c \in \{t^2+t+2, \dots, 3t^2-t\}$ and $c$ is even. Define $X = \{t^2,\dots,t^2+t-1\}$ and $Y = \{t,\dots,t+(3t^2-t-c)/2-1\}$ (with $Y = \emptyset$ if $c = 3t^2 - t$).
Note that the bottom $t$ rows are covered by $\left\{ (i,i,i) : i \in X \right\}$. Thus, $\cov(X,Y)$ is a minimal $c$-cover.

\medskip\noindent
\textbf{Case III:} \
$c \in \{t^2+t+3, \dots, 3t^2-t-1\}$ and $c$ is odd. Define $X = \{t^2,\dots,t^2+t-2\}$ and $Y = \{t,\dots,t+(3t^2-t-c+1)/2-1\}$.
Note that $t^2-1 \not\in Y$, so $(t^2+t-1,0,t^2-1) \in \cov(X,Y)$, and so the bottom $t$ rows are covered by $\left\{ (i,i,i) : i \in X \right\} \cup \{(t^2+t-1,0,t^2-1)\}$. Thus, $\cov(X,Y)$ is a minimal $c$-cover.
% \qed
\end{proof}

\begin{theo}\label{t:IansConst}
For all $t \geq 2$, there exists a Latin square of order $n=t^2+t$ that contains a minimal $c$-cover for all $c \in \{t^2+t, \ldots, 3t^2\}$.
\end{theo}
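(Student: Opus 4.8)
By \lref{lem:all-covs}, the theorem reduces to producing, for each $t\ge2$, a single Latin square $L$ of order $n=t^2+t$ that carries a transversal $T$ together with a minimal cover $\cov$ of size $3t^2$ for which $|\Ur|=|\Uc|=|\Us|=t^2$ and $|\Ur\cap T|=t$; the plan is to build such an $L$ essentially by hand. First I would record that the shape of $\cov$ is forced. Since $\cov$ is a minimal cover, its seven classes partition it, so comparing sizes shows $\cov=\Ur\cup\Uc\cup\Us$ and that exactly $t$ rows, $t$ columns and $t$ symbols fail to be uniquely represented. A short argument then pins everything down: each entry of $\Us$ is the sole occurrence of its symbol in $\cov$ and lies in one of the $t$ non-uniquely-represented rows and one of the $t$ non-uniquely-represented columns, which forces $\Us$ to fill an entire $t\times t$ subarray $R_0\times C_0$ with $t^2$ distinct symbols drawn from a set $S_1$ of size $t^2$; $\Uc$ must have exactly one entry in each of the $t^2$ columns outside $C_0$, each such entry lying in the rows $R_0$ with a symbol from $S_0=\Z_n\setminus S_1$; and $\Ur$ must have one entry in each of the $t^2$ rows outside $R_0$, each lying in $C_0$ with a symbol from $S_0$ (and a counting of occurrences shows every symbol of $S_0$ is used exactly $t$ times in $\Ur$ and exactly $t$ times in $\Uc$). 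Any transversal $T$ with $|\Ur\cap T|=t$ must then miss $\Us\cup\Uc$, exactly as observed at the start of the proof of \lref{lem:all-covs}.

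Guided by this, I would take $R_0$ to be the last $t$ rows, let $C_0$ and $S_0$ be the first $t$ columns and symbols, and let $T$ be the main diagonal, so that $L_{ii}=i$ for every $i$ and the $t$ entries $(i,i,i)$ with $i<t$ are precisely the members of $\Ur\cap T$. Then I would: (i)~fill $R_0\times C_0$ with the $t^2$ symbols of $S_1$, chosen so that the symbol placed in a row of $R_0$ is never that row's diagonal symbol, which is easy since $|S_1|=t^2$; (ii)~place the $\Uc$ entries, one in each column outside $C_0$, distributed over the rows $R_0$ and off the main diagonal, with symbols from $S_0$ arranged so that each symbol of $S_0$ occurs $t$ times among them; (iii)~place the $\Ur$ entry of each row $i\ge t$ in a column of $C_0$ (automatically off the diagonal), again using symbols of $S_0$ so that each symbol of $S_0$ occurs $t$ times in $\Ur$ and the $\Ur$ entries in any fixed column of $C_0$ carry distinct symbols. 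Steps (i)--(iii) produce a partial Latin square $P$ --- namely $\cov$ together with the remaining diagonal cells --- in which every row, column and symbol is used at most $2t+1$ times, and in which $\cov$ is manifestly a minimal cover of the required shape: each $\Us$ entry uniquely represents its symbol, each $\Uc$ entry its column, and each $\Ur$ entry its row, while each of those lines meets $\cov$ at least twice in the other two directions, and $|\Us|=|\Uc|=|\Ur|=t^2$.

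The principal obstacle is the final step, (iv): extending $P$ to a genuine Latin square of order $n$. Because $P$ uses each line at most $2t+1=o(n)$ times, this follows for all sufficiently large $t$ from known completion results for sparse partial Latin squares (a partial Latin square of order $n$ that uses no row, column or symbol more than a small fixed fraction of $n$ times can be completed), and the finitely many remaining small orders can be dealt with by an explicit completion or a short computer search. Alternatively --- and this is the route I would try first, to keep the argument elementary and uniform in $t$ --- one can specify all of $L$ by closed formulas: keep $R_0\times C_0$ as above, put a convenient Latin-square-like pattern (for instance a suitably shifted addition table, mostly on the symbols $S_1$) on the bulk $t^2\times t^2$ block, and then fill the two border blocks so that every row and column becomes Latin; verifying the Latin property, the structure of $\cov$, and that the main diagonal is a transversal meeting $\Ur$ in exactly $t$ entries is then a somewhat lengthy bookkeeping exercise. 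In either case, once such an $L$ is exhibited, \lref{lem:all-covs} immediately yields minimal $c$-covers for every $c\in\{t^2+t,\dots,3t^2\}$, which is the theorem.
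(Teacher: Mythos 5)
Your reduction to \lref{lem:all-covs} is the same first step as the paper, and your forced-structure analysis of a $3t^2$-size minimal cover with $|\Ur|=|\Uc|=|\Us|=t^2$ is correct; the partial Latin square $P$ you describe (the cover together with the idempotent diagonal, every row, column and symbol used at most $2t+1$ times) is internally consistent, and since coverage and minimality of $\cov$, as well as the transversal property of the diagonal and $|\Ur\cap T|=t$, are intrinsic to $P$, any completion of $P$ would indeed satisfy the hypotheses of \lref{lem:all-covs}. The genuine gap is your step (iv). The completion theorems you appeal to (Chetwynd--H\"aggkvist, as improved by Bartlett) guarantee completability only when each row, column and symbol is used at most $\beta n$ times for a very small explicit constant $\beta$ (of order $10^{-4}$); with usage $2t+1$ and $n=t^2+t$ this applies only once $t$ is of order $10^{4}$, i.e.\ for orders $n$ in the hundreds of millions. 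The ``finitely many remaining small orders'' therefore consist of tens of thousands of values of $t$, with squares of order up to roughly $4\times 10^{8}$, which cannot be dispatched by ``a short computer search'', and no explicit completion is offered. (The Daykin--H\"aggkvist-type statement with fraction $1/4$, which would shrink the exceptional range to something checkable, is a conjecture, not a theorem.) Since the statement claims all $t\ge2$, this leaves the theorem proved only for all sufficiently large $t$.

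Your fallback ``closed formula'' route is only a plan: ``fill the two border blocks so that every row and column becomes Latin'' is precisely where the difficulty sits, and nothing is constructed or verified there. That is exactly where the paper does its real work: for $t\notin\{2,6\}$ it takes a pair of orthogonal Latin squares of order $t$, defines the whole square $D$ explicitly over the symbol set $\Z_{t+1}\times\Z_t$, reads off the $3t^2$-cover from the $t$ symbols missing from the bottom-left $t\times t$ block, and exhibits a transversal meeting $\Ur$ in $t$ entries via separate explicit formulas for $t$ even and $t$ odd; the cases $t\in\{2,6\}$, where orthogonal mates of order $t$ do not exist, are handled by sporadic examples. So your sparse-completion strategy is a genuinely different (and asymptotically valid) route, but as written it does not prove the theorem for the full range $t\ge2$: either supply a uniform explicit construction for the uncovered range (essentially reproducing the paper's MOLS construction) or a completion theorem with a constant large enough to leave only a handful of truly small cases.
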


\begin{proof}
For each order, we will give an example of a square that satisfies the properties required in \lref{lem:all-covs}. When $t=2$, the following Latin square satisfies the requirements:

$$
\begin{array}{c}
\begin{tikzpicture}
\fill[blue!50] (-3*13pt,2*15pt) -- (-3*13pt,3*15pt) -- (-2*13pt,2*15pt) -- cycle;
\fill[pattern=crosshatch, pattern color=red!50]  (-2*13pt,3*15pt) -- (-3*13pt,3*15pt) -- (-2*13pt,2*15pt) -- cycle;

\fill[blue!50] (-2*13pt,1*15pt) -- (-2*13pt,2*15pt) -- (-1*13pt,1*15pt) -- cycle;
\fill[pattern=crosshatch, pattern color=red!50]  (-1*13pt,2*15pt) -- (-2*13pt,2*15pt) -- (-1*13pt,1*15pt) -- cycle;

\matrix[square matrix]{
5 & 2 & 3 & 0 & 4 & 1 \\
1 & 4 & 0 & 5 & 2 & 3 \\
|[fill=blue!50]| 4 & 0 & |[pattern=crosshatch, pattern color=red!50]| 2 & 3 & 1 & 5 \\
3 & |[fill=blue!50]| 5 & 4 & |[pattern=crosshatch, pattern color=red!50]| 1 & 0 & 2 \\
|[fill=blue!50]| 0 & |[fill=blue!50]| 1 & |[fill=blue!50]| 5 & 2 & |[pattern=crosshatch, pattern color=red!50]| 3 & |[fill=blue!50]| 4 \\
|[fill=blue!50]| 2 & |[fill=blue!50]| 3 & 1 & |[fill=blue!50]| 4 & |[fill=blue!50]| 5 & |[pattern=crosshatch, pattern color=red!50]| 0 \\
};
\end{tikzpicture}
\end{array}
$$
and when $t = 6$, the Latin square given in \fref{fi:t6} in the appendix satisfies the requirements. 

We may now assume that $t \not\in \{2,6\}$, so there exists a pair $(A,B)$ of orthogonal Latin squares of order $t$. Define a $(t^2+t)\times(t^2+t)$ matrix $D$ by filling cell $(\alpha t + r,\beta t + c)$, for $\alpha,\beta \in \{0,\ldots,t\}$ and $r,c \in \{0,\ldots,t-1\}$, with the symbol \[(A_{rc}-(\alpha + \beta +1), B_{rc}) \in \mathbb{Z}_{t+1} \times \mathbb{Z}_{t}.\]
This means, for example, that row $0$ has cell $(0,\beta t + c)$ filled with symbol $(A_{0c}-(\beta +1), B_{0c})$ whenever $0 \leq \beta \leq t$ and $0 \leq c \leq t-1$.
Thus each symbol in $\mathbb{Z}_{t+1} \times \mathbb{Z}_{t}$ occurs exactly once as we iterate over $\beta$ and $c$, so the first row is Latin. 
A similar argument holds for each row and each column, so $D$ is a Latin square. An example of this construction when $t=4$ is given in \fref{f:transvAndMinCover}.

Consider the set of entries in the bottom-left $t \times t$ submatrix of $D$:
\[\mathcal{D}_S = \big\{\big( t^2+r,c, (A_{rc},B_{rc}) \big) \in E(D) : r,c \in \{0,\ldots,t-1\} \big\}.\]
Since $A$ and $B$ are orthogonal, each symbol that occurs in $\mathcal{D}_S$ occurs exactly once. The symbols in $\mathbb{Z}_{t+1} \times \mathbb{Z}_{t}$ that do not occur in $\mathcal{D}_S$ are thus $X=\{(t,0), \ldots, (t,t-1)\}$.  Define
\begin{align*}
\mathcal{D}_R &= \big\{ (r,c,s) \in E(D) : c \in \{0, \ldots, t-1\} \text{ and } s \in X \big\} \text{ and} \\
\mathcal{D}_C &= \big\{ (r,c,s) \in E(D) : r \in \{t^2, \ldots, t^2 + t-1\} \text{ and } s \in X \big\}.
\end{align*}

We next argue that $\cov = \mathcal{D}_R \cup \mathcal{D}_C \cup \mathcal{D}_S$ is a minimal cover of $D$, where $\Ur = \mathcal{D}_R$, $\Uc = \mathcal{D}_C$, and $\Us=\mathcal{D}_S$. Each symbol is covered by $\cov$, as described above. The first $t$ columns are covered by $\mathcal{D}_S$. For any other column $\beta t + c$ (with $\beta \in \{1,\dots,t\}$ and $c \in \{0,\dots,t-1\}$), let $r$ be such that $A_{rc} = \beta-1$. The entry $(t^2+r,\beta t+c,\unkn) \in \mathcal{D}_C$ covers column $\beta t + c$. Since there were $t^2$ such columns to cover and $|\mathcal{D}_C| = t^2$, no entries in $\mathcal{D}_C$ are redundant (all entries in $\mathcal{D}_R$ and $\mathcal{D}_S$ are contained in the first $t$ columns). A similar argument holds for covering the rows. Thus, $\cov$ is a minimal cover of size $3t^2$ with $|\Ur| = |\Uc| = |\Us| = t^2$. Before we can apply Lemma \ref{lem:all-covs}, we must now find a transversal $T$ in $D$ such that $|\Ur \cap T| = t$.

\medskip\noindent
\textbf{Case I:} \
$t$ is even.

We may assume without loss of generality that $A_{rr}=0$ and $B_{rr}=r$ for $r \in \{0,\ldots,t-1\}$. We construct the Latin square $D$ as described above. The symbols on the main diagonal of $D$ are
\[
\{(-2\alpha-1,r) : 0 \leq \alpha  < t+1 \text{ and } 0 \leq r < t \}.
\]
Since $t+1$ is odd, this set is $\mathbb{Z}_{t+1} \times \mathbb{Z}_t$, implying that the main diagonal is a transversal. Note that $\Ur$ intersects the first $t$ entries of the main diagonal. Thus, we may apply \lref{lem:all-covs} to $D$.

\medskip\noindent
\textbf{Case II:} \
$t$ is odd.

We set $A_{rc} = c-r\mod t$ and $B_{rc} = 2c-r \mod t$. Let $D$ be the Latin square from the construction above. Note that
\begin{align*}
\big\{ ( \alpha t + r, \alpha t + r, (-2 \alpha - 1, r)) &: 0 \leq \alpha < (t+1)/2 \text{ and } 0 \leq r < t \big\} \\
\cup \big\{ ( \alpha t + r, \alpha t + (r + 1), (- 2 \alpha, r+2) ) &: (t+1)/2 \leq \alpha < t+1 \text{ and } 0 \leq r < t-1 \big\} \\
\cup  \big\{ ( \alpha t + r, \alpha t, (- 2 \alpha, r+2) ) &: (t+1)/2 \leq \alpha < t+1 \text{ and }r = t-1 \big\}
\end{align*}
is a transversal of $D$ and that $\Ur$ intersects the first $t$ entries of this transversal. Thus, we may apply \lref{lem:all-covs} to $D$.
% \qed
\end{proof}

%Note that in the proof of \tref{t:IansConst} with $t$ odd, we may use any orthogonal pair $A$ and $B$, not just the pair given there. The only change needed is to the transversal of $D$. The set of entries that follow the symbol $0$ in $A$ when $0 \leq \alpha < (t+1)/2$ and the symbol $1$ in $A$ when $(t+1)/2 \leq \alpha < t+1$ is a transversal that satisfies the needed properties.

\begin{figure}[htp]
\centering
$\begin{array}{ccc}
\begin{array}{c}
\begin{array}{c}A=\end{array}
\begin{array}{c}
\begin{tikzpicture}
\matrix[square matrix]{
0 & 2 & 3 & 1 \\
2 & 0 & 1 & 3 \\
3 & 1 & 0 & 2 \\
1 & 3 & 2 & 0 \\
};
\end{tikzpicture}
\end{array}
\\
\begin{array}{c}B=\end{array}
\begin{array}{c}
\begin{tikzpicture}
\matrix[square matrix]{
0 & 3 & 1 & 2 \\
2 & 1 & 3 & 0 \\
3 & 0 & 2 & 1 \\
1 & 2 & 0 & 3 \\
};
\end{tikzpicture}
\end{array}
\end{array}
&
\begin{array}{c}D=\end{array}
\begin{array}{c}
\begin{tikzpicture}

\fill[blue!50] (-10*13pt,9*15pt) -- (-10*13pt,10*15pt) -- (-9*13pt,9*15pt) -- cycle;
\fill[pattern=crosshatch, pattern color=red!50]  (-9*13pt,10*15pt) -- (-10*13pt,10*15pt) -- (-9*13pt,9*15pt) -- cycle;
\fill[blue!50] (-9*13pt,8*15pt) -- (-9*13pt,9*15pt) -- (-8*13pt,8*15pt) -- cycle;
\fill[pattern=crosshatch, pattern color=red!50]  (-8*13pt,9*15pt) -- (-9*13pt,9*15pt) -- (-8*13pt,8*15pt) -- cycle;
\fill[blue!50] (-8*13pt,7*15pt) -- (-8*13pt,8*15pt) -- (-7*13pt,7*15pt) -- cycle;
\fill[pattern=crosshatch, pattern color=red!50]  (-7*13pt,8*15pt) -- (-8*13pt,8*15pt) -- (-7*13pt,7*15pt) -- cycle;
\fill[blue!50] (-7*13pt,6*15pt) -- (-7*13pt,7*15pt) -- (-6*13pt,6*15pt) -- cycle;
\fill[pattern=crosshatch, pattern color=red!50]  (-6*13pt,7*15pt) -- (-7*13pt,7*15pt) -- (-6*13pt,6*15pt) -- cycle;

% [[DB: See comment below]]
% \fill[brown!30,rounded corners] (2*13pt,-2*15pt) rectangle (6*13pt,-6*15pt);

\matrix[square matrix]{
16 & 7 & 9 & 2 & 12 & 3 & 5 & 18 & 8 & 19 & 1 & 14 & 4 & 15 & 17 & 10 & 0 & 11 & 13 & 6 \\
6 & 17 & 3 & 8 & 2 & 13 & 19 & 4 & 18 & 9 & 15 & 0 & 14 & 5 & 11 & 16 & 10 & 1 & 7 & 12 \\
11 & 0 & 18 & 5 & 7 & 16 & 14 & 1 & 3 & 12 & 10 & 17 & 19 & 8 & 6 & 13 & 15 & 4 & 2 & 9 \\
1 & 10 & 4 & 19 & 17 & 6 & 0 & 15 & 13 & 2 & 16 & 11 & 9 & 18 & 12 & 7 & 5 & 14 & 8 & 3 \\
12 & 3 & 5 & |[fill=blue!50]| 18 & |[pattern=crosshatch, pattern color=red!50]| 8 & 19 & 1 & 14 & 4 & 15 & 17 & 10 & 0 & 11 & 13 & 6 & 16 & 7 & 9 & 2 \\
2 & 13 & |[fill=blue!50]| 19 & 4 & 18 & |[pattern=crosshatch, pattern color=red!50]| 9 & 15 & 0 & 14 & 5 & 11 & 16 & 10 & 1 & 7 & 12 & 6 & 17 & 3 & 8 \\
7 & |[fill=blue!50]| 16 & 14 & 1 & 3 & 12 & |[pattern=crosshatch, pattern color=red!50]| 10 & 17 & 19 & 8 & 6 & 13 & 15 & 4 & 2 & 9 & 11 & 0 & 18 & 5 \\
|[fill=blue!50]| 17 & 6 & 0 & 15 & 13 & 2 & 16 & |[pattern=crosshatch, pattern color=red!50]| 11 & 9 & 18 & 12 & 7 & 5 & 14 & 8 & 3 & 1 & 10 & 4 & 19 \\
8 & |[fill=blue!50]| 19 & 1 & 14 & 4 & 15 & 17 & 10 & |[pattern=crosshatch, pattern color=red!50]| 0 & 11 & 13 & 6 & 16 & 7 & 9 & 2 & 12 & 3 & 5 & 18 \\
|[fill=blue!50]| 18 & 9 & 15 & 0 & 14 & 5 & 11 & 16 & 10 & |[pattern=crosshatch, pattern color=red!50]| 1 & 7 & 12 & 6 & 17 & 3 & 8 & 2 & 13 & 19 & 4 \\
3 & 12 & 10 & |[fill=blue!50]| 17 & 19 & 8 & 6 & 13 & 15 & 4 & |[pattern=crosshatch, pattern color=red!50]| 2 & 9 & 11 & 0 & 18 & 5 & 7 & 16 & 14 & 1 \\
13 & 2 & |[fill=blue!50]| 16 & 11 & 9 & 18 & 12 & 7 & 5 & 14 & 8 & |[pattern=crosshatch, pattern color=red!50]| 3 & 1 & 10 & 4 & 19 & 17 & 6 & 0 & 15 \\
4 & 15 & |[fill=blue!50]| 17 & 10 & 0 & 11 & 13 & 6 & 16 & 7 & 9 & 2 & |[pattern=crosshatch, pattern color=red!50]| 12 & 3 & 5 & 18 & 8 & 19 & 1 & 14 \\
14 & 5 & 11 & |[fill=blue!50]| 16 & 10 & 1 & 7 & 12 & 6 & 17 & 3 & 8 & 2 & |[pattern=crosshatch, pattern color=red!50]| 13 & 19 & 4 & 18 & 9 & 15 & 0 \\
|[fill=blue!50]| 19 & 8 & 6 & 13 & 15 & 4 & 2 & 9 & 11 & 0 & 18 & 5 & 7 & 16 & |[pattern=crosshatch, pattern color=red!50]| 14 & 1 & 3 & 12 & 10 & 17 \\
9 & |[fill=blue!50]| 18 & 12 & 7 & 5 & 14 & 8 & 3 & 1 & 10 & 4 & 19 & 17 & 6 & 0 & |[pattern=crosshatch, pattern color=red!50]| 15 & 13 & 2 & 16 & 11 \\
|[fill=blue!50]| 0 & |[fill=blue!50]| 11 & |[fill=blue!50]| 13 & |[fill=blue!50]| 6 & |[fill=blue!50]| 16 & 7 & 9 & 2 & 12 & 3 & 5 & |[fill=blue!50]| 18 & 8 & |[fill=blue!50]| 19 & 1 & 14 & |[pattern=crosshatch, pattern color=red!50]| 4 & 15 & |[fill=blue!50]| 17 & 10 \\
|[fill=blue!50]| 10 & |[fill=blue!50]| 1 & |[fill=blue!50]| 7 & |[fill=blue!50]| 12 & 6 & |[fill=blue!50]| 17 & 3 & 8 & 2 & 13 & |[fill=blue!50]| 19 & 4 & |[fill=blue!50]| 18 & 9 & 15 & 0 & 14 & |[pattern=crosshatch, pattern color=red!50]| 5 & 11 & |[fill=blue!50]| 16 \\
|[fill=blue!50]| 15 & |[fill=blue!50]| 4 & |[fill=blue!50]| 2 & |[fill=blue!50]| 9 & 11 & 0 & |[fill=blue!50]| 18 & 5 & 7 & |[fill=blue!50]| 16 & 14 & 1 & 3 & 12 & 10 & |[fill=blue!50]| 17 & |[fill=blue!50]| 19 & 8 & |[pattern=crosshatch, pattern color=red!50]| 6 & 13 \\
|[fill=blue!50]| 5 & |[fill=blue!50]| 14 & |[fill=blue!50]| 8 & |[fill=blue!50]| 3 & 1 & 10 & 4 & |[fill=blue!50]| 19 & |[fill=blue!50]| 17 & 6 & 0 & 15 & 13 & 2 & |[fill=blue!50]| 16 & 11 & 9 & |[fill=blue!50]| 18 & 12 & |[pattern=crosshatch, pattern color=red!50]| 7 \\
};

\draw[ultra thick] (-10*13pt,-10*15pt) rectangle (10*13pt,10*15pt);

\draw[ultra thick] (-6*13pt,-10*15pt) to (-6*13pt,10*15pt);
\draw[ultra thick] (-2*13pt,-10*15pt) to (-2*13pt,10*15pt);
\draw[ultra thick] (2*13pt,-10*15pt) to (2*13pt,10*15pt);
\draw[ultra thick] (6*13pt,-10*15pt) to (6*13pt,10*15pt);

\draw[ultra thick] (-10*13pt,-6*15pt) to (10*13pt,-6*15pt);
\draw[ultra thick] (-10*13pt,-2*15pt) to (10*13pt,-2*15pt);
\draw[ultra thick] (-10*13pt,2*15pt) to (10*13pt,2*15pt);
\draw[ultra thick] (-10*13pt,6*15pt) to (10*13pt,6*15pt);

% \draw [decorate,decoration={brace,amplitude=10pt},xshift=0.8em] (10*13pt,6*15pt) -- (10*13pt,-6*15pt) node [midway,xshift=1.4cm] {\footnotesize $\{t,\ldots,t^2-1\}$};
% \draw [decorate,decoration={brace,amplitude=10pt},yshift=-0.8em] (6*13pt,-10*15pt) -- (-6*13pt,-10*15pt) node [midway,yshift=-0.6cm] {\footnotesize $\{t,\ldots,t^2-1\}$};

% [[DB: I removed (x,y) and all discussion of T_{ij} from the proof of Theorem 10, so we can remove the arrow pointing to it and the focus on T_{ij}]]
% \draw[ultra thick,brown!60!black,rounded corners] (2*13pt,-2*15pt) rectangle (6*13pt,-6*15pt);
%\draw[thick,brown!60!black,densely dotted,stealth-] (6*13pt,-4*15pt) to[bend right=5] (10.6*13pt,-7*15pt) node[right] (blah) {$T_{(t-1)(t-1)}$};

%\draw[ultra thick,purple,rounded corners] (4*13pt,-3*15pt) rectangle (5*13pt,-4*15pt);
%\draw[thick,purple,densely dotted,stealth-] (5*13pt,-3.7*15pt) to[bend right=5] (10.6*13pt,-8*15pt) node[right] (blah) {entry $(x,y,D_{xy})$};
\end{tikzpicture}
\end{array}
\end{array}$
\caption{\label{f:transvAndMinCover}Example of the construction in the proof of \tref{t:IansConst} after the symbols are relabeled to belong to $\mathbb{Z}_{20}$.  Here we have $t=4$, and we highlight a $3t^2$-cover.  We also highlight the main diagonal, which is a transversal.}
\end{figure}
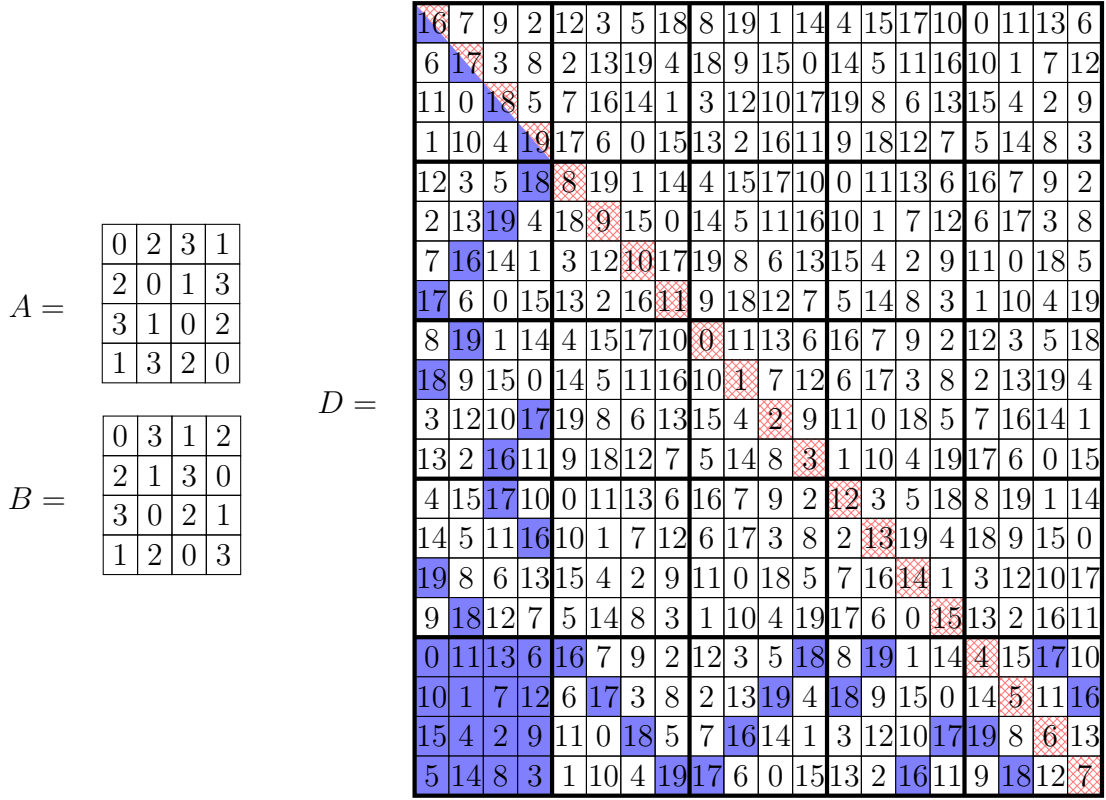

Our next goal is to show that all Latin squares have a minimal cover that is asymptotically equal to the bound in \tref{t:lrgmincov}. To do so, we introduce the notion of a partial minimal cover.  If $L$ is a partial Latin square and $\mathcal{P} \subseteq E(L)$ such that, for some $\mathbf{e} \in \mathcal{P}$, both $\mathcal{P}$ and $\mathcal{P} \setminus \{\mathbf{e}\}$ represent the same lines, then we call $\mathbf{e}$ \emph{redundant}.  An entry $(r,c,s) \in \mathcal{P}$ is redundant if and only if there exists three other entries of the form $(r,\unkn,\unkn)$, $(\unkn,c,\unkn)$ and $(\unkn,\unkn,s)$ in $\mathcal{P}$.  We define a \emph{partial minimal cover} as any $\mathcal{P} \subseteq E(L)$ that has no redundant entries.  We can iteratively delete redundant entries from any $\mathcal{P} \subseteq E(L)$ to obtain a partial minimal cover of size no more than $|\mathcal{P}|$ in which the same lines are represented.

% However, the resulting partial minimal cover may vary (possibly in size) depending on the choice of redundant entries which are deleted.  E.g. in the Latin square below
% \begin{center}
% \begin{tikzpicture}
% \matrix[square matrix]{
% |[fill=blue!50]| 0 & |[fill=blue!50]| 1 & |[fill=blue!50]| 2 \\
% |[fill=blue!50]| 2 & |[fill=blue!50]| 0 & |[fill=white]| 1 \\
% |[fill=blue!50]| 1 & |[fill=white]| 2 & |[fill=white]| 0 \\
% };
% \end{tikzpicture}
% \end{center}
% if we delete the redundant entry $(1,1,0)$ we obtain a (partial) minimal cover of size $5$, and if we delete the redundant entries $(0,0,0)$, $(0,1,1)$ and $(1,0,2)$ we obtain a (partial) minimal cover of size $3$.

% becky: I moved away from `partial Latin squares' because that needs defining; we need to talk about subsets of entries (and partial Latin squares are better defined as matrices); we want to avoid issues of completion

It is important to note that not every partial minimal cover can be extended to a minimal cover, and \fref{f:non-extendable} gives two examples of partial minimal covers that cannot be extended to a minimal cover (nor even a larger partial minimal cover).

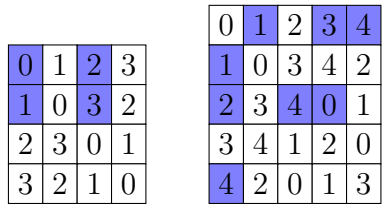
\begin{figure}[htp]
\begin{center}
\begin{tikzpicture}
\matrix[square matrix]{
|[fill=blue!50]| 0 & |[fill=white]| 1 & |[fill=blue!50]| 2 & |[fill=white]| 3 \\
|[fill=blue!50]| 1 & |[fill=white]| 0 & |[fill=blue!50]| 3 & |[fill=white]| 2 \\
|[fill=white]| 2 & |[fill=white]| 3 & |[fill=white]| 0 & |[fill=white]| 1 \\
|[fill=white]| 3 & |[fill=white]| 2 & |[fill=white]| 1 & |[fill=white]| 0 \\
};
\end{tikzpicture}
\quad
\begin{tikzpicture}
\matrix[square matrix]{
|[fill=white]| 0 & |[fill=blue!50]| 1 & |[fill=white]| 2 & |[fill=blue!50]| 3 & |[fill=blue!50]| 4 \\
|[fill=blue!50]| 1 & |[fill=white]| 0 & |[fill=white]| 3 & |[fill=white]| 4 & |[fill=white]| 2 \\
|[fill=blue!50]| 2 & |[fill=white]| 3 & |[fill=blue!50]| 4 & |[fill=blue!50]| 0 & |[fill=white]| 1 \\
|[fill=white]| 3 & |[fill=white]| 4 & |[fill=white]| 1 & |[fill=white]| 2 & |[fill=white]| 0 \\
|[fill=blue!50]| 4 & |[fill=white]| 2 & |[fill=white]| 0 & |[fill=white]| 1 & |[fill=white]| 3 \\
};
\end{tikzpicture}
\caption{\label{f:non-extendable}Two Latin squares with partial minimal covers that are not subsets of any minimal cover.}
\end{center}
\end{figure}

Even though a partial minimal cover does not necessarily extend to a minimal cover, we can find a minimal cover that is at least as large as any partial minimal cover.

\begin{lemm}\label{l:partial-to-full}
Let $L$ be a Latin square of order $n$ and $\mathcal{P}$ be a partial minimal cover of $L$. Then $L$ contains a minimal cover of size at least $|\mathcal{P}|$.
\end{lemm}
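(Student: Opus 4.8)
The plan is to induct on the number $m$ of lines of $L$ not represented by $\mathcal{P}$. If $m=0$ then $\mathcal{P}$ is itself a cover with no redundant entries, hence a minimal cover, and there is nothing to prove. Suppose $m\ge 1$. Conjugation of $L$ maps partial minimal covers to partial minimal covers of the same size and maps minimal covers back to minimal covers of the same size, so we may assume that some \emph{row} $r$ is unrepresented by $\mathcal{P}$; equivalently, row $r$ is empty in $\mathcal{P}$.

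In the inductive step I would add a single entry $\mathbf{e}_0=(r,c,L_{rc})$ in row $r$, for a carefully chosen column $c$, and then delete any newly redundant entries, obtaining a partial minimal cover $\mathcal{P}'$ with $|\mathcal{P}'|\ge|\mathcal{P}|$ that represents strictly more lines than $\mathcal{P}$; the induction hypothesis applied to $\mathcal{P}'$ then produces a minimal cover of size at least $|\mathcal{P}'|\ge|\mathcal{P}|$. To control the redundancies created, call a column $c$ \emph{critical} if $\mathcal{P}$ has exactly one entry $e_c=(r_c,c,s_c)$ in column $c$ while also containing an entry other than $e_c$ in row $r_c$ and an entry other than $e_c$ in symbol class $s_c$; define a critical symbol analogously. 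Since row $r$ is empty in $\mathcal{P}$, the entry $\mathbf{e}_0$ is the unique entry of $\mathcal{P}\cup\{\mathbf{e}_0\}$ in row $r$ and so is not redundant there; and a routine case check shows that the only entries of $\mathcal{P}$ that can become redundant in $\mathcal{P}\cup\{\mathbf{e}_0\}$ are the unique column-$c$ entry of $\mathcal{P}$ (and only if $c$ is critical) and the unique symbol-$L_{rc}$ entry of $\mathcal{P}$ (and only if $L_{rc}$ is critical), and that these two entries are distinct (they carry different symbols, one of them being $L_{rc}$, since $L$ is a Latin square). Hence if $c$ is chosen so that column $c$ is not critical or the symbol $L_{rc}$ is not critical, then $\mathcal{P}\cup\{\mathbf{e}_0\}$ has at most one redundant entry; deleting it if it exists gives the desired $\mathcal{P}'$ (passing to a subset cannot destroy non-redundancy, so after one deletion there are no redundant entries left), and $\mathcal{P}'$ represents every line that $\mathcal{P}$ does together with row $r$.

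The remaining point --- and what I expect to be the crux --- is that a suitable column $c$ always exists. Suppose not: for every column $c$, both column $c$ and the symbol $L_{rc}$ are critical. Because row $r$ of $L$ is a permutation of the symbols, the second condition says that \emph{every} symbol is critical, and the first says that every one of the $n$ columns contains exactly one entry of $\mathcal{P}$. Fix any column $c$ with its unique entry $e_c=(r_c,c,s_c)$: criticality of column $c$ forces $\mathcal{P}$ to contain an entry other than $e_c$ in symbol class $s_c$, whereas criticality of the symbol $s_c$ --- i.e.\ every symbol class containing exactly one entry of $\mathcal{P}$ --- forces $e_c$ to be the only entry of $\mathcal{P}$ in symbol class $s_c$, a contradiction. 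Thus a suitable $c$ exists, completing the inductive step and the proof. The routine verifications (exhaustiveness of the case check on which entries can become redundant, and that the edited sets are partial minimal covers) are of the kind already flagged in this section and I would dispatch them briefly.
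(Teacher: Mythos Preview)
Your argument is correct. The induction, the classification of which entries can become redundant after inserting $\mathbf{e}_0$, the verification that $e_c\neq f_s$ (they lie in the same column of $L$ but in different rows, hence carry different symbols), and the contradiction showing that not every column and every symbol can be simultaneously critical all check out. One tiny point worth making explicit in the write-up: when you delete the single redundant entry, the lines it sat on are still represented---by $\mathbf{e}_0$ for the column or symbol it shared with $\mathbf{e}_0$, and by the ``other'' entries whose existence is exactly what criticality guarantees---so $\mathcal{P}'$ really does represent every line $\mathcal{P}$ did, plus row $r$.

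Your route differs from the paper's. The paper splits on size: if $|\mathcal{P}|\le 2n-1$ it simply quotes \lref{l:trivcov} to produce a minimal $(2n-1)$-cover, and if $|\mathcal{P}|\ge 2n$ it uses pigeonhole to find a column already represented at least twice, so that inserting $(r,c,L_{rc})$ there cannot make any column entry redundant and at most one symbol entry needs to be deleted. In effect the paper always picks a \emph{non}-critical column, whereas you allow a critical column provided the entered symbol is non-critical, and you prove directly that one of the two must be non-critical. Your version is self-contained (no appeal to \lref{l:trivcov}) and avoids the size dichotomy; the paper's version has a marginally simpler inductive step at the cost of that external lemma and case split.
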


\begin{proof}
If $|\mathcal{P}| \leq 2n-1$, then the cover described in the proof of \lref{l:trivcov} satisfies the constraints, so assume $|\mathcal{P}| \geq 2n$.  If $\mathcal{P}$ is a minimal cover, then the statement is trivial so suppose there is some line, say row $r$, that is not covered by $\mathcal{P}$.

% becky: "without loss of generality" is the strangest thing... it's used exactly for the purpose of losing generality!  I think it should say the complete opposite: "in order to reduce generality".

Since $|\mathcal{P}| \geq 2n$, there exists a column $c$ that is represented at least twice in $\mathcal{P}$. Define $\mathcal{P}' = \mathcal{P} \cup \{(r,c,s)\}$ where $s=L_{rc}$.  If $\mathcal{P}'$ is not a partial minimal cover, then there must be some entry in row $r$, in column $c$ or with symbol $s$ that is redundant. Since $(r,c,s)$ is the only entry in row $r$, it is not redundant. Since there are at least three entries in column $c$ in $\mathcal{P}'$, no entry in column $c$ is redundant in $\mathcal{P}'$ (otherwise we contradict the minimality of $\mathcal{P}$). However, if $s$ is represented exactly once in $\mathcal{P}$, by $(r_0,c_0,s)$ say, then that entry is redundant in $\mathcal{P}'$ if and only if there are other entries covering row $r_0$ and column $c_0$. In this case, we define $\mathcal{P}''=\mathcal{P}' \setminus \{(r_0,c_0,s)\}$, otherwise, we define $\mathcal{P}''=\mathcal{P}'$. Note that in either case, $\mathcal{P}''$ is a partial minimal cover that covers strictly more lines than $\mathcal{P}$ and is at least as big as $\mathcal{P}$.

We repeat the above process until all lines are covered.
% \qed
\end{proof}

% becky: I suggest avoiding words like "will" and "can"... if we write "we will remove...", then it doesn't necessarily imply we're removing now, but _will_ some time later... and if we write "we can repeat...", it makes it sound like we have a choice whether or not we actually do repeat.

Next, we need a technical lemma.

\begin{lemm}\label{l:bipgr}
Fix $\eps>0$. Let $G$ be a bipartite graph with bipartition $A\cup B$ and maximum degree at most $n^{1/2+\eps}$. Suppose that $G$ has $n^{3/2+\eps}-O(n^{1+2\eps})$ edges and that $|A|=n-O(n^{1/2+\eps})$ and $|B|=n-O(n^{1/2+\eps})$.  Then we can find a set $U\subset A$ of vertices such that  $|U|=O(n^{1/2+\eps})$ and there are at most $O(n^{1/2+2\eps})$ vertices in $B$ that do not have a neighbour in $U$.
\end{lemm}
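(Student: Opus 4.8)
The plan is to choose $U$ at random — include each vertex of $A$ in $U$ independently with probability $p=cn^{-1/2+\eps}$ for a small positive constant $c$ (with $p=1$ in the degenerate range $\eps\ge1/2$) — and to show that with positive probability $U$ is small and misses few vertices of $B$. The preliminary move is to set aside the low-degree vertices of $B$. Let $B'=\{v\in B:\deg_G(v)<\tfrac12 n^{1/2+\eps}\}$. Counting edges from the $B$-side and using that every degree is at most $n^{1/2+\eps}$,
\[
n^{3/2+\eps}-O(n^{1+2\eps})=|E(G)|=\sum_{v\in B}\deg_G(v)\le|B|\,n^{1/2+\eps}-|B'|\cdot\tfrac12 n^{1/2+\eps},
\]
and since $|B|=n-O(n^{1/2+\eps})$ gives $|B|\,n^{1/2+\eps}\le n^{3/2+\eps}+O(n^{1+2\eps})$, rearranging forces $|B'|=O(n^{1/2+\eps})$. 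Every vertex of $B'$ will be allowed to remain uncovered, which is affordable because $n^{1/2+\eps}=O(n^{1/2+2\eps})$.

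Now take the random set $U\subseteq A$ described above, so that $\mathbb{E}|U|=p|A|=O(n^{1/2+\eps})$. For each $v\in B\setminus B'$,
\[
\Pr[v\text{ has no neighbour in }U]=(1-p)^{\deg_G(v)}\le e^{-p\deg_G(v)}\le e^{-(c/2)n^{2\eps}},
\]
which is smaller than any fixed power of $n^{-1}$, so the expected number of uncovered vertices of $B\setminus B'$ is at most $|B|\,e^{-(c/2)n^{2\eps}}=o(1)$. Hence the expected total number of uncovered vertices of $B$ is $|B'|+o(1)=O(n^{1/2+\eps})$. Applying Markov's inequality separately to $|U|$ and to the number of uncovered vertices, and taking a union bound, we obtain with probability bounded away from $0$ that both quantities are $O(n^{1/2+\eps})$ simultaneously; any outcome in this event gives a set $U$ as required, since $n^{1/2+\eps}=O(n^{1/2+2\eps})$.

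The only genuinely delicate step is the estimate $|B'|=O(n^{1/2+\eps})$: it exploits that the hypotheses force the average degree $|E(G)|/|B|$ to lie within $O(n^{1/2+\eps})$ of the maximum degree $n^{1/2+\eps}$, so that only $O(n^{1/2+\eps})$ vertices can fall a constant factor below it. Everything after that is a routine first-moment calculation requiring nothing stronger than Markov's inequality. A greedy construction of $U$ — repeatedly adding to $U$ a vertex of $A$ covering the largest number of still-uncovered vertices of $B\setminus B'$ — would also work, but tracking the residual degrees makes it longer than the averaging argument.
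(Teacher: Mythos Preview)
Your proof is correct and follows essentially the same approach as the paper: isolate the low-degree vertices of $B$ (those of degree below $\tfrac12 n^{1/2+\eps}$), show by edge counting that there are only $O(n^{1/2+\eps})$ of them, and then use a first-moment argument to find a small $U\subseteq A$ covering all the high-degree vertices. The only difference is cosmetic: the paper samples $U$ uniformly among subsets of $A$ of the fixed size $\lceil n^{1/2+\eps}\rceil$ (so no Markov step is needed for $|U|$, and the probability estimate uses a hypergeometric rather than binomial bound), whereas you include vertices independently and then control $|U|$ with Markov; both routes give the same conclusion.
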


\begin{proof}
Let $B'\subseteq B$ be the set of vertices in $B$ of degree at least $\frac12 n^{1/2+\eps}$. Counting edges, we have that
\[
n^{3/2+\eps}-O(n^{1+2\eps}) \le n^{1/2+\eps}|B'|+\tfrac12 n^{1/2+\eps}\big(n-O(n^{1/2+\eps})-|B'|\big),
\]
which implies that $|B'|\ge n-O(n^{1/2+2\eps})$. Consider choosing a set $U \subseteq A$ of size $|U|=\lceil n^{1/2+\eps}\rceil$ uniformly at random. For any $v\in B'$ the probability that $v$ has no neighbour in $U$ is
\[
\frac{\binom{|A|-\mathrm{deg}(v)}{|U|}}{\binom{|A|}{|U|}} \leq 
\frac{\binom{|A|-\frac12 n^{1/2+\eps}}{|U|}}{\binom{|A|}{|U|}}
=\prod_{0\le i<|U|}\frac{|A|-\frac12 n^{1/2+\eps}-i}{|A|-i}
\le \bigg(\frac{|A|-\frac12 n^{1/2+\eps}}{|A|}\bigg)^{|U|}
=O\big(\exp(-\tfrac12 n^{2\eps})\big)
\]
using the identity $1-1/x \leq e^{-1/x}$ when $x \neq 0$. So the expected number of vertices in $B'$ with no neighbour in $U$ is $O\big(n\exp(-\frac12 n^{2\eps})\big)=o(1)$. It follows that for large $n$ there is some choice of $U$ whose neighbourhood includes $B'$, and we are done.
% \qed
\end{proof}

% We use \lref{l:bipgr} repeatedly to prove the following theorem: first to identify, in an Latin square of order $n$, a submatrix $S$ in which $n-O(n^{1/2+\eps})$ distinct symbols occur, but not those symbols belonging to a set of symbols $\Psi$.  We then use \lref{l:bipgr} again to identify a set of rows $U_2$ which does not intersect $S$ in which $n-O(n^{1/2+2\eps})$ columns contain symbols in $\Psi$.  We then use \lref{l:bipgr} again to identify a set of columns $U_3$ which does not intersect $S$ in which $n-O(n^{1/2+\eps})$ rows contain symbols in $\Psi$.  Combining these and deleting redundant entries gives a minimal partial cover of the desired size, whence Lemma~\ref{l:partial-to-full} implies we have a minimal cover of at least this size.

\begin{theo}\label{th:LSmincov}
Fix $\eps>0$.  Every Latin square of order $n$ has a minimal cover of size $3n-O(n^{1/2+\eps})$.
\end{theo}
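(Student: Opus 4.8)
The plan is to invoke \lref{l:partial-to-full}: it suffices to exhibit, in an arbitrary Latin square $L$ of order $n$, a partial minimal cover $M\subseteq E(L)$ (a subset with no redundant entry) of size $3n-O(n^{1/2+\eps})$, since the resulting minimal cover then has size at least $|M|$ and at most $3n$ (or use \tref{t:lrgmincov}). As $\eps>0$ is arbitrary we may replace it by $\eps/2$ throughout, so it is enough to arrange $|M|=3n-O(n^{1/2+2\eps})$. We build $M$ in the shape of the extremal example of \tref{t:IansConst}. Fix $m=\lfloor n^{1/2+\eps}\rfloor$, disjoint symbol sets $\Sigma_R,\Sigma_C$ of size $m$, and a set $R_0$ of $m$ rows. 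The set $M$ will be a disjoint union $M=\Ur\cup\Uc\cup\Us$ where, for small row and column sets $R^*$ and $C^*$: $\Ur$ contains, for all but $O(n^{1/2+2\eps})$ rows $r\notin R^*$, one entry $(r,c,L_{rc})$ with $c\in C^*$ and $L_{rc}\in\Sigma_R$; $\Uc$ contains, for all but $O(n^{1/2+2\eps})$ columns $c\notin C^*$, one entry $(r,c,L_{rc})$ with $r\in R^*$ and $L_{rc}\in\Sigma_C$; and $\Us$ contains, for each symbol $\sigma$ in the set $\Sigma_S$ of symbols outside $\Sigma_R\cup\Sigma_C$ that appear in the submatrix $R^*\times C^*$, one entry of $R^*\times C^*$ with symbol $\sigma$, these entries having pairwise distinct symbols.

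The key point, which makes the non-redundancy check immediate, is that $\Sigma_R$, $\Sigma_C$, $\Sigma_S$ are pairwise disjoint: an entry of $\Us$ then uniquely represents its symbol, an entry of $\Ur$ is the only entry of $M$ in its row (since $\Uc$ and $\Us$ live in rows of $R^*$), and an entry of $\Uc$ is the only entry of $M$ in its column. So every entry of $M$ uniquely represents some line and is therefore not redundant, while $M\subseteq E(L)$ is automatically a partial Latin square. Since $|\Ur|,|\Uc|,|\Us|$ are each $n-O(n^{1/2+2\eps})$, this gives $|M|=3n-O(n^{1/2+2\eps})$.

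To produce $R^*$, $C^*$ and all of these entries, apply \lref{l:bipgr} (with the given $\eps$) to three $m$-regular bipartite graphs whose two parts each have $n$ vertices; each has exactly $nm=n^{3/2+\eps}-O(n^{1+2\eps})$ edges, so the hypotheses hold. (i)~Columns versus symbols, with $c$ joined to $s$ when $s$ occurs in column $c$ among the rows $R_0$: this yields $U_1$, a set of $O(n^{1/2+\eps})$ columns such that all but $O(n^{1/2+2\eps})$ symbols occur in $R_0\times U_1$. (ii)~Columns versus rows, with $c$ joined to $r$ when $L_{rc}\in\Sigma_R$: this yields $U_2$, a set of $O(n^{1/2+\eps})$ columns such that all but $O(n^{1/2+2\eps})$ rows $r$ have some $c\in U_2$ with $L_{rc}\in\Sigma_R$. (iii)~Rows versus columns, with $r$ joined to $c$ when $L_{rc}\in\Sigma_C$: this yields $U_3$, a set of $O(n^{1/2+\eps})$ rows such that all but $O(n^{1/2+2\eps})$ columns $c$ have some $r\in U_3$ with $L_{rc}\in\Sigma_C$. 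Put $C^*=U_1\cup U_2$ and $R^*=R_0\cup U_3$, both of size $O(n^{1/2+\eps})$. Parts (ii) and (iii) directly supply the entries of $\Ur$ and $\Uc$ (restricting to rows outside $R^*$, resp.\ columns outside $C^*$); for $\Us$, part (i) ensures all but $O(n^{1/2+2\eps})$ symbols lie in $R_0\times U_1\subseteq R^*\times C^*$, and a routine application of Hall's theorem (distinct symbols occupy distinct cells) gives a matching of $\Sigma_S$ into the cells of $R^*\times C^*$, which defines $\Us$. Feeding $M$ into \lref{l:partial-to-full} and undoing the substitution $\eps\mapsto\eps/2$ completes the proof.

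I expect the main obstacle to be locating this structure in the first place: one must notice that ``no redundant entry'' is a weak enough requirement that it is enough to split the cover into three parts using pairwise disjoint symbol sets, concentrating $\Ur$ into a few columns and $\Uc$ into a few rows, and then one must package the relevant incidences of $L$ as the regular, $\Theta(n^{3/2+\eps})$-edge bipartite graphs that \lref{l:bipgr} was designed to handle. Once the three applications of \lref{l:bipgr} and the Hall matching are set up, the verification that $M$ has no redundant entry and has the right size is routine.
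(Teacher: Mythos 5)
Your proposal is correct and follows essentially the same route as the paper: both arguments reduce, via \lref{l:partial-to-full} and the substitution $\eps\mapsto\eps/2$, to exhibiting a partial minimal cover of size $3n-O(n^{1/2+2\eps})$, which is found by applying \lref{l:bipgr} to incidence bipartite graphs built from a small set of rows (to catch almost all symbols in a small submatrix) and from small symbol sets (to catch almost all rows using few columns and almost all columns using few rows). The differences are only bookkeeping: the paper extracts a single symbol set $\Psi$ from what the first step leaves uncovered and then prunes redundant entries, whereas you prescribe two disjoint alphabets $\Sigma_R,\Sigma_C$ in advance so that minimality of $M$ is immediate (no Hall's theorem is really needed for $\Us$, and you should note, as the paper does via \lref{l:trivcov}, the trivial reduction to small $\eps$ so that the two prescribed symbol sets of size $\lfloor n^{1/2+\eps}\rfloor$ actually fit inside $\Z_n$).
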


\begin{proof}
If $\eps \geq 1/2$, then the theorem follows from \lref{l:trivcov}, so assume that $\eps < 1/2$. Suppose $L$ is a Latin square of order $n$ and let  $\psi=\lfloor n^{1/2+\eps}\rfloor$.  We gradually build a large partial minimal cover $\cov$ for $L$.

Define $B_1$ to be the $\psi$-regular bipartite graph with vertices $\{c_0,\dots,c_{n-1}\}\cup\{s_0,\dots,s_{n-1}\}$ with an edge $c_is_j$ if and only if $L_{ki}=j$ for some $k \in\{0,\dots,\psi-1\}$.  Applying \lref{l:bipgr} to $B_1$, we find a set $U_1\subseteq\{c_0,\dots,c_{n-1}\}$ with $|U_1|=O(n^{1/2+\eps})$ such that $n-O(n^{1/2+2\eps})$ vertices in $\{s_0,\dots,s_{n-1}\}$ have a neighbour in $U_1$.  In other words, the submatrix $S$ formed by the rows indexed $\{0,\dots,\psi-1\}$ and the columns indexed $\{i : c_i \in U_1\}$ contains a set of $n-O(n^{1/2+2\eps})$ entries with distinct symbols, and we (provisionally) initialise $\cov$ to be this set of entries.  By removing at most $\psi$ entries from $\cov$ if necessary, we identify a set $\Psi$ of $\psi$ symbols that are not yet represented in $\cov$.

Next we form a bipartite graph $B_2$. The vertices of $B_2$ correspond to the rows and columns of $L$ that do not intersect $S$. We place an edge from row vertex $r$ to column vertex $c$ if and only if $L_{rc}\in\Psi$.  Since $S$ has $O(n^{1/2+\eps})$ rows and $O(n^{1/2+\eps})$ columns, $B_2$ has $n\psi-O(n^{1/2+\eps}\psi)=n^{3/2+\eps}-O(n^{1+2\eps})$ edges and maximum degree at most $\psi$.  Hence we can apply \lref{l:bipgr} twice to find a set $U_2$ of rows and a set $U_3$ of columns with desired properties that we now describe. First, they do not intersect $S$.  Second, they are small enough that $|U_2|=O(n^{1/2+\eps})$ and $|U_3|=O(n^{1/2+\eps})$.  We (provisionally) include in $\cov$ any entry containing a symbol in $\Psi$ in the rows in $U_2$ and/or the columns of $U_3$. \lref{l:bipgr} implies that these entries cover a set $U_4$ of $n-O(n^{1/2+2\eps})$ rows and a set $U_5$ of $n-O(n^{1/2+2\eps})$ columns.

% \lref{l:bipgr} thus implies there exists a set $U_2$ of $O(n^{1/2+\eps})$ rows (which do not intersect $S$) such that $n-O(n^{1/2+2\eps})$ columns of $L$ contain an entry with a symbol in $\Psi$ in a row in $U_2$.  \lref{l:bipgr} likewise implies there exists a set $U_3$ of $O(n^{1/2+\eps})$ columns (which do not intersect $S$) such that $n-O(n^{1/2+2\eps})$ of the rows of $L$ contain an entry with a symbol in $\Psi$ in a column in $U_3$.  We add to $\cov$ the entries in the rows $U_2$ with symbols in $\Psi$ and the entries in the columns $U_2$ with symbols in $\Psi$.

% Hence we can apply \lref{l:bipgr} twice to find a set $U_2$ of rows and a set $U_3$ of columns with desired properties that we now describe. First, they are small enough that $|U_2|=O(n^{1/2+\eps})$ and $|U_3|=O(n^{1/2+\eps})$.  Second, we may (provisionally) include in $\cov$ every entry that does not share a row or column with $S$ but contains one of the symbols from $\Psi$ [[becky: this is not a property of $U_2$ and/or $U_3$---I think this should restrict to the rows of $U_2$ and the columns of $U_3$ (otherwise an entry in \cov can cover the first two bullet points below, and using the Lemma seems pointless).]]. The conclusion of \lref{l:bipgr} ensures that such entries cover $n-O(n^{1/2+2\eps})$ different rows and $n-O(n^{1/2+2\eps})$  different columns.

At this point, $\cov$ may not be a partial minimal cover, so we iteratively remove redundant entries from $\cov$. Afterwards, the following three sets, each comprising of $n-O(n^{1/2+2\eps})$ lines, are covered and no entry in $\cov$ can cover more than one of the following lines:
\begin{itemize}
 \item The rows in $U_4$ that are not in $U_2$.
 \item The columns in $U_5$ that are not in $U_3$.
 \item The symbols other than those in $\Psi$.
\end{itemize}
Thus $\cov$ is a partial minimal cover of size at least $3n-O(n^{1/2+2\eps})$.  By \lref{l:partial-to-full}, there is a minimal cover of $L$ of size $3n-O(n^{1/2+2\eps})$. We replace $\eps$ by $\eps/2$ to complete the proof.
% \qed
\end{proof}

Next, we report on some computations of sizes of minimal covers for small Latin squares.

The Cayley tables of the groups $\Z_3$ and $\Z_2\times\Z_2$ have transversals and $(n+2)$-covers, but do not have any minimal $(n+1)$-covers. Thus the spectrum of sizes of minimal covers is not continuous in these two cases. However, these two Latin squares may be just small anomalies, since we found no other Latin squares of order up to $8$ with a gap in their spectrum.

For orders $n\le5$, the minimal cover constructed in \lref{l:trivcov} meets the bound in \tref{t:lrgmincov} and hence has maximum possible size. For each order in the range $6\le n\le 9$, we found a Latin square that has no minimal cover meeting the bound in \tref{t:lrgmincov}. Our computations were exhaustive for $6\le n\le 8$, where there is a gap of only 1 between the size of the cover in \lref{l:trivcov} and the bound in \tref{t:lrgmincov}. For $n=6$, there are 6 species that meet the bound and 6 that do not; neither group table meets the bound. For $n=7$ there are 145 species that meet the bound. The 2 species that do not meet the bound contain the group $\Z_7$ and the Steiner quasigroup. For $n=8$ there are 283654 species that meet the bound. The 3 species that do not meet the bound contain the dihedral group, the elementary abelian group, and 
the Latin square obtained by turning an intercalate in the elementary abelian group (that is, by replacing a $2\times 2$ Latin subsquare with the other possible subsquare on the same two symbols). Note that the autotopism group of the elementary abelian group acts transitively on the intercalates, so it does not matter which intercalate gets turned.

%this non-group based Latin square:
%\[
%\begin{array}{c}
%\begin{tikzpicture}
%\matrix[square matrix]{
%0&1&2&3&4&5&6&7\\
%1&2&3&0&5&4&7&6\\
%2&3&0&1&6&7&4&5\\
%3&0&1&2&7&6&5&4\\
%4&5&6&7&0&1&2&3\\
%5&4&7&6&1&0&3&2\\
%6&7&4&5&2&3&0&1\\
%7&6&5&4&3&2&1&0\\
%};
%\end{tikzpicture}
%\end{array}
%\]
We could not do exhaustive computations for all Latin squares of order $9$, but we confirmed that $\Z_3\times\Z_3$ meets the bound in \tref{t:lrgmincov}, whilst $\Z_9$ does not. The largest minimal cover in $\Z_9$ has size 18, which is one more than the size of the example in \lref{l:trivcov} but one less than the bound in \tref{t:lrgmincov}.

\bigskip

In \sref{s:duality}, we showed a kind of duality between minimal covers and maximal partial transversals. However, we next reveal a distinction between the behaviours of these objects.  We begin with the following theorem, which gives the values of $k$ and $n$ for which there exists a Latin square of order $n \geq 5$ with a maximal partial transversals of deficit $d=n-k$.

\begin{theo}\label{th:parttrans_subsq}
For all integers $n \geq 5$ and $k \geq 1$ satisfying $n \geq 2k$,  there exists a Latin square $L=[L_{ij}]$ of order $n$ where $L_{ii}=i$ for all $i \in \{0,\ldots,n-k-1\}$ and the intersection of the $k$ rows and columns indexed by $\{n-k,\ldots,n-1\}$ is a subsquare (i.e., a submatrix that is a Latin square) on the symbols $\{0,\ldots,k-1\}$.  Consequently, $L$ has a maximal partial transversal of length $n-k$.
\end{theo}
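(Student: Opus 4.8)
\emph{Proof proposal.} I would first dispose of the ``consequently'' clause. Set $m=n-k$ and $T=\{(i,i,i):0\le i\le m-1\}$. Its entries lie on the main diagonal, so they occupy distinct rows, distinct columns and distinct symbols; thus $T$ is a partial transversal of size $n-k$, and the rows, columns and symbols it fails to represent are exactly those indexed by $\{m,\dots,n-1\}$. If some entry $(r,c,s)$ enlarged $T$ to a partial transversal, we would need $r,c,s\in\{m,\dots,n-1\}$, so $(r,c)$ would lie in the $k\times k$ subsquare and hence $s=L_{rc}\in\{0,\dots,k-1\}$; but $\{0,\dots,k-1\}$ is disjoint from $\{m,\dots,n-1\}$ because $n\ge2k$, a contradiction. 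So $T$ is maximal, and everything reduces to building a Latin square with the prescribed shape.

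For the construction, note that $m\ge3$ always and that $m=k$ forces $n=2k\ge6$, hence $k\ge3$. I would decompose $L$ into the four blocks determined by the partition of rows and columns into $A=\{0,\dots,m-1\}$ and $B=\{m,\dots,n-1\}$, and build it as follows. Put any Latin square $H$ of order $k$ on the symbols $\{0,\dots,k-1\}$ into the block $B\times B$. Obtain the block $A\times A$ by perturbing an idempotent Latin square $G$ of order $m$ on $\{0,\dots,m-1\}$: replace a non-attacking family of off-diagonal cells of $G$ comprising $k$ cells carrying each of the symbols $k,\dots,m-1$ by cells carrying the new symbols $m,\dots,n-1$, so that each of $m,\dots,n-1$ is used exactly $m-k$ times and no symbol recurs in any row or column (when $m=k$ nothing is perturbed and $A\times A=G$). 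Finally, in each row of $A\times A$ exactly $k$ of the symbols $\{k,\dots,n-1\}$ are missing; fill $A\times B$ with the $m\times k$ Latin rectangle over $\{k,\dots,n-1\}$ having precisely these sets as its rows, and fill $B\times A$ symmetrically from the missing-symbol sets of the columns of $A\times A$. These rectangles exist because each symbol of $\{k,\dots,n-1\}$ is missing from exactly $k$ rows (and $k$ columns) of $A\times A$, so the relevant bipartite graph is $k$-regular and $k$-edge-colourable. Routine checks then show the four blocks assemble into a Latin square of order $n$ whose diagonal reads $0,\dots,m-1$ (the diagonal of $G$ was left untouched) and in which $H$ is a subsquare on $\{0,\dots,k-1\}$ in rows and columns $\{n-k,\dots,n-1\}$; the maximality of $T$ then follows as in the first paragraph.

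The main obstacle, I expect, is the choice of cells in the perturbation: the chosen cells, taken over all of $k,\dots,m-1$, must meet each row and each column at most $k$ times — automatic when $m\le2k$, since then a row meets at most $m-k\le k$ of the relevant permutation patterns, but needing a small balancing (flow) argument when $m>2k$. Granted that, an equitable $k$-edge-colouring of the bipartite graph of chosen cells assigns the new symbols $m,\dots,n-1$ with the required multiplicity $m-k$ and with no repetition in any line, and the rest is bookkeeping: verifying the partial-Latin-square property, that every line acquires its prescribed symbol set, and that the diagonal survives.
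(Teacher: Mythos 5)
Your reduction to the block construction is mostly sound: the maximality argument for $T$ is exactly right, the side blocks $A\times B$ and $B\times A$ do assemble correctly via K\H{o}nig's theorem once the perturbed block $A\times A$ has the stated symbol frequencies, and your observation that the case $m:=n-k\le 2k$ is automatic is correct (pleasantly, it even covers the cases $n-k=6$, $k\in\{4,5\}$ that force the paper to exhibit ad hoc squares). The genuine gap is exactly the step you flagged, and it is not ``a small balancing (flow) argument''. For $m>2k$ you must choose, inside an \emph{arbitrary} idempotent $G$, exactly $k$ off-diagonal cells of each symbol $s\in\{k,\dots,m-1\}$ so that every row \emph{and} every column carries at most $k$ chosen cells. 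Since the cell of symbol $s$ in a given row lies in a prescribed column, the per-symbol quotas, the row caps and the column caps are three mutually coupled partition constraints on the cell set: this is an axial three-index transportation (rainbow selection) problem, not a network flow, and its LP relaxation is not totally unimodular, so the easy fractional solution (take each off-diagonal cell of each big symbol with weight $k/(m-1)$) does not yield an integral selection. A greedy count also falls short: after handling $t-1$ symbols, up to $t-1$ rows and $t-1$ columns may be saturated, blocking up to $2(t-1)$ of the $m-1$ available cells of the next symbol, which suffices only when $m\le 2k$. So in the regime $m>2k$ your construction is currently unproven.

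It is instructive that the paper avoids precisely this difficulty by not working with an arbitrary idempotent square: it takes a Latin square of order $m$ possessing $k+1$ disjoint transversals (a nontrivial existence fact cited from the transversal survey, with explicit substitutes for $n-k=6$, $k\in\{4,5\}$), makes it idempotent, and relabels the big symbols along the $k$ transversals that avoid the diagonal; disjointness of the transversals delivers your row/column load bound and a proper assignment of the new symbols simultaneously, and the resulting $m\times m$ array is then completed to order $n$ by Ryser's embedding theorem rather than by hand-building the border blocks. You could repair your argument the same way, by demanding that $G$ have $k$ disjoint transversals off the diagonal (and addressing the same existence question, including its exceptions), or by actually proving a balanced-selection lemma for arbitrary idempotent $G$; either way, the missing ingredient is a substantive one rather than bookkeeping.
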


\begin{proof}
A Latin square $M=[M_{ij}]$ of order $m$ is idempotent if $M_{ii}=i$ for all $i \in \{0,\dots,m-1\}$. 
Any Latin square with a transversal can be made idempotent by applying an isotopism.

The $n=2k$ case of the theorem is immediate, by simply taking a direct product of an idempotent Latin square of order $k$ with a Latin square of order $2$. So we may assume that $k \leq n-k-1$. Also, note that $n-k \geq \lceil n/2 \rceil \geq 3$.

\begin{figure}[htb]
\centering
%\begin{tikzpicture}
%\matrix[square matrix]{
%|[fill=white]| 0 & |[fill=red!50]| 4 & |[fill=blue!50]| 3 & |[fill=green!50]| 1 & |[fill=white]| 5 & |[fill=white]| 2 \\
%|[fill=blue!50]| 4 & |[fill=white]| 1 & |[fill=green!50]| 5 & |[fill=white]| 0 & |[fill=red!50]| 2 & |[fill=white]| 3 \\
%|[fill=white]| 1 & |[fill=blue!50]| 5 & |[fill=white]| 2 & |[fill=white]| 4 & |[fill=green!50]| 3 & |[fill=red!50]| 0 \\
%|[fill=white]| 5 & |[fill=white]| 2 & |[fill=red!50]| 1 & |[fill=white]| 3 & |[fill=blue!50]| 0 & |[fill=green!50]| 4 \\
%|[fill=green!50]| 2 & |[fill=white]| 3 & |[fill=white]| 0 & |[fill=red!50]| 5 & |[fill=white]| 4 & |[fill=blue!50]| 1 \\
%|[fill=red!50]| 3 & |[fill=green!50]| 0 & |[fill=white]| 4 & |[fill=blue!50]| 2 & |[fill=white]| 1 & |[fill=white]| 5 \\
%};

%\draw [decorate,decoration={brace,amplitude=10pt}] (-3*13pt,3.2*15pt) -- (3*13pt,3.2*15pt) node [black,midway,yshift=0.8cm] {$\substack{\text{\small order-$6$ Latin square $M$}\\[0.2em] \text{\small with $3$ disjoint transversals}}$};
%\end{tikzpicture}
%\qquad
\begin{tikzpicture}
\matrix[square matrix]{
|[fill=white]| 0 & |[fill=white]| 2 & |[fill=blue!50]| 9 & |[fill=blue!50]| 8 & |[fill=white]| 3 & |[fill=white]| 1 \\
|[fill=white]| 3 & |[fill=white]| 1 & |[fill=white]| 0 & |[fill=blue!50]| 7 & |[fill=white]| 2 & |[fill=blue!50]| 4 \\
|[fill=blue!50]| 9 & |[fill=white]| 0 & |[fill=white]| 2 & |[fill=white]| 1 & |[fill=blue!50]| 6 & |[fill=white]| 3 \\
|[fill=blue!50]| 8 & |[fill=blue!50]| 7 & |[fill=white]| 1 & |[fill=white]| 3 & |[fill=white]| 0 & |[fill=white]| 2 \\
|[fill=white]| 1 & |[fill=white]| 3 & |[fill=blue!50]| 5 & |[fill=white]| 2 & |[fill=blue!50]| 4 & |[fill=white]| 0 \\
|[fill=white]| 2 & |[fill=blue!50]| 6 & |[fill=white]| 3 & |[fill=white]| 0 & |[fill=white]| 1 & |[fill=blue!50]| 5 \\
};
\draw [decorate,decoration={brace,amplitude=10pt}] (-3*13pt,3.2*15pt) -- (3*13pt,3.2*15pt) node [black,midway,yshift=0.8cm] {$\substack{\text{\small $M'$ when}\\[0.2em] \text{\small $n-k=6$ and $k=4$}}$};
\end{tikzpicture}
\qquad
\begin{tikzpicture}
\matrix[square matrix]{
|[fill=white]| 0 & |[fill=white]| 4 & |[fill=white]| 1 & |[fill=blue!50]| 6 & |[fill=white]| 2 & |[fill=white]| 3 \\
|[fill=white]| 3 & |[fill=white]| 1 & |[fill=white]| 4 & |[fill=white]| 2 & |[fill=blue!50]| 7 & |[fill=white]| 0 \\
|[fill=white]| 4 & |[fill=blue!50]| 8 & |[fill=white]| 2 & |[fill=white]| 0 & |[fill=white]| 3 & |[fill=white]| 1 \\
|[fill=white]| 1 & |[fill=white]| 2 & |[fill=blue!50]| 9 & |[fill=white]| 3 & |[fill=white]| 0 & |[fill=white]| 4 \\
|[fill=blue!50]| 10 & |[fill=white]| 0 & |[fill=white]| 3 & |[fill=white]| 1 & |[fill=white]| 4 & |[fill=white]| 2 \\
|[fill=white]| 2 & |[fill=white]| 3 & |[fill=white]| 0 & |[fill=white]| 4 & |[fill=white]| 1 & |[fill=blue!50]| 5 \\
};
\draw [decorate,decoration={brace,amplitude=10pt}] (-3*13pt,3.2*15pt) -- (3*13pt,3.2*15pt) node [black,midway,yshift=0.8cm] {$\substack{\text{\small $M'$ when}\\[0.2em] \text{\small $n-k=6$ and $k=5$}}$};
\end{tikzpicture}
\caption{\label{fi:nk6}Matrices used in the $n-k=6$ case of the proof of \tref{th:parttrans_subsq}.}
\end{figure}

If $n-k=6$ and $k\in\{4,5\}$, we define $M'$ as given in \fref{fi:nk6}. In all other relevant cases, we can find a Latin square of order $n-k$ with $k+1$ disjoint transversals \cite{transurv}. Applying an isotopism, we get an idempotent Latin square $M=[M_{ij}]$ of order $n-k$. It has $k$ disjoint transversals, denoted $d_\sigma$ for $\sigma \in \{n-k,\ldots,n-1\}$, which do not intersect the main diagonal.  We replace the symbols in $\{k,\ldots,n-k-1\}$ in each $d_\sigma$ by the symbol $\sigma$, and call the result $M'$.  We give an example of this construction in \fref{fi:edit}. 

Thus, $M'$ is idempotent and contains $n-k$ copies of each symbol in $\{0,\ldots,k-1\}$ and $n-2k$ copies of each symbol in $\{k,\ldots,n-1\}$.  Ryser's Theorem \cite{Rys51} implies that $M'$ embeds in a Latin square $L$ of order $n$; this is illustrated for the example in \fref{fi:edit}.  Moreover, since $M'$ contains each symbol in $\{0,\ldots,k-1\}$ exactly $n-k$ times, the intersection of the $k$ rows and columns indexed by $\{n-k,\ldots,n-1\}$ in $L$ must be a subsquare on the symbols $\{0,\ldots,k-1\}$.
% \qed
\end{proof}

\begin{figure}[htb]
\centering
$\begin{array}{ccccc}
\begin{array}{c}
\begin{tikzpicture}
\matrix[square matrix]{
0 & |[fill=blue!50]| 4 & |[fill=red!50]| 3 & 2 & 1 \\
2 & 1 & |[fill=blue!50]| 0 & |[fill=red!50]| 4 & 3 \\
4 & 3 & 2 & |[fill=blue!50]| 1 & |[fill=red!50]| 0 \\
|[fill=red!50]| 1 & 0 & 4 & 3 & |[fill=blue!50]| 2 \\
|[fill=blue!50]| 3 & |[fill=red!50]| 2 & 1 & 0 & 4 \\
};
\end{tikzpicture}
\end{array}
&
\begin{array}{c}
\xrightarrow{\text{edit}}
\end{array}
&
\begin{array}{c}
\begin{tikzpicture}
\matrix[square matrix]{
0 & |[fill=blue!50]| 5 & |[fill=red!50]| 6 & 2 & 1 \\
2 & 1 & |[pattern=crosshatch, pattern color=blue!50]| 0 & |[fill=red!50]| 6 & 3 \\
4 & 3 & 2 & |[pattern=crosshatch, pattern color=blue!50]| 1 & |[pattern=crosshatch, pattern color=red!50]| 0 \\
|[pattern=crosshatch, pattern color=red!50]| 1 & 0 & 4 & 3 & |[fill=blue!50]| 5 \\
|[fill=blue!50]| 5 & |[fill=red!50]| 6 & 1 & 0 & 4 \\
};
\end{tikzpicture}
\end{array}
&
\begin{array}{c}
\xrightarrow{\text{embed in Latin square}}
\end{array}
&
\begin{array}{c}
\begin{tikzpicture}
\matrix[square matrix]{
0 & 5 & 6 & 2 & 1 & 3 & 4 \\
2 & 1 & 0 & 6 & 3 & 4 & 5 \\
4 & 3 & 2 & 1 & 0 & 5 & 6 \\
1 & 0 & 4 & 3 & 5 & 6 & 2 \\
5 & 6 & 1 & 0 & 4 & 2 & 3 \\
3 & 2 & 5 & 4 & 2 & 0 & 1 \\
6 & 4 & 3 & 5 & 6 & 1 & 0 \\
};

\draw[ultra thick,rounded corners] (-3.5*13pt,-1.5*15pt) rectangle (1.5*13pt,3.5*15pt);
\end{tikzpicture}
\end{array}
\end{array}$
\caption{\label{fi:edit}Example of the construction in the proof of \tref{th:parttrans_subsq} when $n=7$ and $k=2$.}
\end{figure}

Any partial transversal of length less than $\lceil n/2\rceil$ can be extended.  Thus, a consequence of \tref{th:parttrans_subsq} is that among all Latin squares of order $n \geq 5$, the shortest maximal partial transversal has length $\lceil n/2\rceil$.  \tref{th:LSmincov} shows that the upper bound on minimal covers described in \tref{t:lrgmincov} is achieved asymptotically for all Latin squares of order $n$.  However, as we establish in the following theorem, most Latin squares do not come close to achieving a maximal partial transversal of length $\lceil n/2\rceil$.  While minimum covers directly relate to maximum partial transversals (see Theorems~\ref{th:PTtoCover} and~\ref{th:CovertoPT}), maximum minimal covers seem not to have a direct relationship with minimum maximal partial transversals.

\begin{theo}\label{t:RLSminmaxpt}
Fix $\eps>0$. With probability approaching $1$ as $n\rightarrow\infty$, a Latin square of order $n$ chosen uniformly at random has no maximal partial transversal of deficit exceeding $n^{2/3+\eps}$.
\end{theo}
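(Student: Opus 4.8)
The plan is to reduce the statement to a first-moment bound over ``forbidden submatrices'' and then to control the relevant probability by a permanent estimate. If $\eps\ge 1/3$ there is nothing to prove, since a partial transversal of a square of order $n$ has deficit at most $n$; so assume $0<\eps<1/3$ and put $d=\lfloor n^{2/3+\eps}\rfloor+1=\Theta(n^{2/3+\eps})=o(n)$. Suppose $L$ has a maximal partial transversal $T$ of deficit $d'\ge d$, and let $R'$, $C'$, $S'$ be the (size-$d'$) sets of rows, columns and symbols missed by $T$. Maximality of $T$ is equivalent to the assertion that $L_{rc}\notin S'$ for all $r\in R'$ and $c\in C'$. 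Hence for any $R\subseteq R'$, $C\subseteq C'$ and $S\subseteq S'$ with $|R|=|C|=|S|=d$, the $d\times d$ submatrix $L[R,C]$ contains no symbol of $S$. It therefore suffices to prove that, with probability $1-o(1)$, a uniformly random Latin square $L$ of order $n$ admits no triple $(R,C,S)$ of $d$-subsets of $\Z_n$ with $L[R,C]\cap S=\emptyset$. Since the uniform measure on Latin squares is invariant under independent permutations of the rows, of the columns and of the symbols, the union bound gives
\[
\Pr\big[\text{such a triple exists}\big]\ \le\ \binom nd^{3}\Pr\big[L[R_0,C_0]\cap S_0=\emptyset\big]\ \le\ n^{3d}\,\Pr\big[L[R_0,C_0]\cap S_0=\emptyset\big]
\]
for one fixed triple $(R_0,C_0,S_0)$.

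The heart of the argument is the estimate
\[
\Pr\big[L[R_0,C_0]\cap S_0=\emptyset\big]\ \le\ \exp\!\big(-\Omega(d^{3}/n)\big),
\]
of which even the weaker bound $\exp(-\omega(d\log n))$ would suffice. To prove it, write the probability as $N'/N$, where $N$ counts all Latin squares of order $n$ and $N'$ counts those whose $R_0\times C_0$ block avoids $S_0$, and generate a Latin square row by row, placing the $n-d$ rows outside $R_0$ first and then the $d$ rows of $R_0$. When the $j$th of the latter rows is inserted, the number of legal ways to fill it is the permanent of an availability matrix $Q_j$, which is $(d-j+1)$-regular on $\Z_n\sqcup\Z_n$; with the extra requirement that the $d$ cells in columns $C_0$ avoid the $d$ symbols of $S_0$, one instead takes the permanent of the matrix $Q_j'$ obtained from $Q_j$ by deleting the $C_0\times S_0$ sub-block. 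The number of $1$s deleted is, for all but an exponentially small fraction of the partially built squares (by a standard concentration estimate for the number of occurrences of a symbol in a submatrix of a random Latin square), of order $d^{2}(d-j+1)/n$, and a permanent-ratio bound — the van der Waerden lower bound (Egorychev--Falikman) for $\mathrm{per}(Q_j)$ and Br\'egman's bound for $\mathrm{per}(Q_j')$, with crude constants — gives $\mathrm{per}(Q_j')/\mathrm{per}(Q_j)\le\exp(-\Omega(d^{2}/n))$. Multiplying over $j=1,\dots,d$ yields the estimate.

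Granting this, we are done: by the union bound above,
\[
\Pr\big[L\text{ has a maximal partial transversal of deficit}>n^{2/3+\eps}\big]\ \le\ n^{3d}\exp\!\big(-\Omega(d^{3}/n)\big)\ =\ \exp\!\big(3d\log n-\Omega(d^{3}/n)\big),
\]
and since $d\ge n^{2/3+\eps}$ gives $d^{3}/n\ge n^{1+3\eps}$ while $3d\log n=O(n^{2/3+\eps}\log n)=o(n^{1+3\eps})$, the exponent tends to $-\infty$.

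The substantive difficulty is the core estimate, and within it the control of dependence: the availability matrices $Q_j$ are not literally regular and depend on the earlier rows, so one cannot treat $N$ or $N'$ as an exact product of permanents and must instead argue that all the relevant permanents lie within an $\exp(o(d^{3}/n))$ factor of their ``typical'' value. This is feasible precisely because only $d=o(n)$ rows are re-examined and because the target bound carries a large margin — it only needs to beat $n^{3d}=\exp(O(n^{2/3+\eps}\log n))$, whereas the true order of magnitude is $\exp(-\Theta(n^{1+3\eps}))$, so losses of size $\exp(O(n\,\mathrm{polylog}\,n))$ from crude permanent inequalities, together with the discarding of an $\exp(-\Omega(n))$ fraction of atypical partial squares, are all affordable. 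This margin is also the reason the theorem requires $\eps>0$ rather than $\eps=0$.
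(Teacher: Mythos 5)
Your reduction (a maximal partial transversal of deficit $d$ forces a $d\times d$ submatrix that avoids $d$ symbols, then a union bound over triples $(R_0,C_0,S_0)$) is exactly the reduction in the paper; the difference is that the paper then simply invokes the discrepancy theorem of Kwan and Sudakov \cite[Thm~2]{KS17} for random Latin squares, while you try to prove the required single-triple estimate $\Pr[L[R_0,C_0]\cap S_0=\emptyset]\le\exp(-\Omega(d^3/n))$ from scratch. That core estimate is where your argument has a genuine gap, in two ways. First, the ``standard concentration estimate for the number of occurrences of a symbol in a submatrix of a random Latin square'' that you invoke is not standard: it is essentially the very discrepancy statement for random Latin squares that is being proved (and that the paper outsources to \cite{KS17}), so as written your argument is circular, or at least rests on an unproved input of the same depth as the theorem.

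Second, and more concretely, the bookkeeping in your row-by-row scheme does not bound $N'/N$. You insert the rows of $R_0$ \emph{last}. But observe that if at any of these last $d$ steps some pair $(c,s)\in C_0\times S_0$ is still ``available'' (i.e.\ $Q_j$ has a $1$ in the deleted block), then symbol $s$ must still be placed in column $c$ within the remaining rows, all of which lie in $R_0$, so the avoidance event is already doomed and such a history contributes nothing to $N'$. Conversely, the histories that do contribute to $N'$ are precisely those in which the first $n-d$ rows have already used every symbol of $S_0$ in every column of $C_0$; for these, the block you delete from $Q_j$ is empty, $\mathrm{per}(Q_j')=\mathrm{per}(Q_j)$, and your per-step gain is exactly $1$. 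So multiplying the ``typical'' deleted count $d^2(d-j+1)/n$ over $j$ bounds nothing: the typical histories contribute $0$ to $N'$, and the contributing histories yield no gain. All of the probability cost is paid by the first $n-d$ rows, which your sketch never analyses. The approach can be repaired — for instance, insert the rows of $R_0$ \emph{first}, so that under the constraint the deleted block of the availability matrix deterministically contains all $d^2$ pairs at each of the first $d$ steps, and then the aggregate Egorychev--Falikman/Br\'egman accounting (with total slack $\exp(O(n\log^2 n))\ll\exp(d^3/n)$, as you correctly note) does give $\exp(-\Omega(d^3/n))$ — but that is a different argument from the one you propose, and making it rigorous (in particular the comparison between the uniform Latin square and the row-by-row extension process) is exactly the content of the large-deviation machinery in \cite{KS17} that the paper cites instead of reproving.
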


\begin{proof}
Let $L$ be a random Latin square of order $n$. Suppose that $L$ has a maximal partial transversal $T$ of deficit $d$. Let $S$ be the $d\times d$ submatrix of $L$ induced by the rows and columns that are not represented in $T$. By the maximality of $T$, we know that $S$ contains none of the $d$ symbols that are not represented in $T$. However, if this is the case and $d=n^{2/3+\eps}$, then \cite[Thm~2]{KS17} would imply that $n^{1+3\eps} = d^3/n = O(n^{1+3\eps/2}\log n)$, which is a contradiction, so no such submatrix $S$ exists in $L$.
%However, by \cite[Thm~2]{KS17} there is no such submatrix in $L$ for any $d>n^{2/3+\eps}$.
% \qed
\end{proof}

% [[becky: an unsatisfying proof---there's no actual work here.  And the claim does not seem to follow immediately from Theorem 2 in that paper]]

\section{Concluding remarks}\label{s:conclude}

We have introduced covers of Latin squares with the aim of using them to better understand partial transversals, focusing primarily on topics relating to extremal sizes.

We found that some properties of covers have analogous properties for partial transversals, while others do not.  For example, the maximum size of partial transversals is closely related to the minimum size of covers.  However, the minimum size of a maximal partial transversal is $\lceil n/2\rceil$, which most Latin squares do not come close to achieving (see \tref{t:RLSminmaxpt}). In contrast, the maximum size of a minimal cover is $3n-O(n^{1/2})$, which is asymptotically achieved by all Latin squares (see \tref{th:LSmincov}).

%For small covers and large partial transversals, an inherent obstacle throughout this research is that 
There are $(n+1)$-covers that contain no partial transversals of deficit $0$ or $1$.  The error on the upper bound on the number of partial transversals in \tref{th:nplus1} grows with the number of such $(n+1)$-covers.  Also, while Brualdi's Conjecture implies the existence of $(n+1)$-covers in all Latin squares of order $n$, we have not established the converse.  Instead, a weaker form of the converse is true: if every Latin square of order $n \geq 2$ has an $(n+1)$-cover, then every Latin square of order $n \geq 2$ has a partial transversal of deficit $2$.

Relating the enumeration of partial transversals with small deficit ($d \in \{1,2\}$) to the enumeration of $(n+1)$-covers is also difficult because the number of embeddings of a maximal partial transversal of deficit $d$ within an $(n+1)$-cover depends on the structure of the Latin square.

There are switches that can be performed among $(n+1)$-covers, such as
\[
\begin{array}{ccc}
\begin{array}{c}
\begin{tikzpicture}
\matrix[square matrix]{
|[fill=white]| 3 & |[fill=blue!50]| 1 & |[fill=white]| 5 & |[fill=white]| 2 & |[fill=white]| 0 & |[fill=white]| 4 \\
|[fill=white]| 1 & |[fill=white]| 0 & |[fill=blue!50]| 3 & |[fill=white]| 5 & |[fill=white]| 4 & |[fill=white]| 2 \\
|[fill=white]| 4 & |[fill=white]| 2 & |[fill=white]| 1 & |[fill=white]| 3 & |[fill=white]| 5 & |[fill=blue!50]| 0 \\
|[fill=white]| 2 & |[fill=white]| 5 & |[fill=white]| 0 & |[fill=blue!50]| 4 & |[fill=white]| 1 & |[fill=white]| 3 \\
|[fill=blue!50]| 0 & |[fill=white]| 4 & |[fill=white]| 2 & |[fill=white]| 1 & |[fill=white]| 3 & |[fill=blue!50]| 5 \\
|[fill=white]| 5 & |[fill=white]| 3 & |[fill=white]| 4 & |[fill=white]| 0 & |[fill=blue!50]| 2 & |[fill=white]| 1 \\
};
\end{tikzpicture}
\end{array}
&
\begin{array}{c}
\longleftrightarrow
\end{array}
&
\begin{array}{c}
\begin{tikzpicture}
\matrix[square matrix]{
|[fill=white]| 3 & |[fill=blue!50]| 1 & |[fill=white]| 5 & |[fill=white]| 2 & |[fill=white]| 0 & |[fill=white]| 4 \\
|[fill=white]| 1 & |[fill=white]| 0 & |[fill=blue!50]| 3 & |[fill=white]| 5 & |[fill=white]| 4 & |[fill=white]| 2 \\
|[fill=blue!50]| 4 & |[fill=white]| 2 & |[fill=white]| 1 & |[fill=white]| 3 & |[fill=white]| 5 & |[fill=white]| 0 \\
|[fill=white]| 2 & |[fill=white]| 5 & |[fill=white]| 0 & |[fill=blue!50]| 4 & |[fill=white]| 1 & |[fill=white]| 3 \\
|[fill=blue!50]| 0 & |[fill=white]| 4 & |[fill=white]| 2 & |[fill=white]| 1 & |[fill=white]| 3 & |[fill=blue!50]| 5 \\
|[fill=white]| 5 & |[fill=white]| 3 & |[fill=white]| 4 & |[fill=white]| 0 & |[fill=blue!50]| 2 & |[fill=white]| 1 \\
};
\end{tikzpicture}
\end{array}
\end{array}
\]
which converts an $(n+1)$-cover inducing $G_5$ into an $(n+1)$-cover inducing $G_3$.  However, we did not succeed in making switchings work for converting $(n+1)$-covers inducing $G_1$ into the other structures, which would yield a partial transversal of deficit $1$.  It is possible that more complicated switching patterns might succeed in changing the graph structure in $(n+1)$-covers inducing $G_1$, but it is also possible that identifying such switchings would not be possible without, say, proving Brualdi's Conjecture.

In the case of minimal covers of maximum size, the results in \sref{s:maxmincov} make significant progress, finding an explicit upper bound that is achieved infinitely often, and that is achieved asymptotically by all Latin squares.

In the proof of \tref{th:LSmincov}, we find an  $O(n^{1/2+\eps}) \times O(n^{1/2+\eps})$ submatrix $S$ containing all but $O(n^{1/2+\eps})$ symbols.  This raises the question as to whether stronger results in this direction hold. Does every $n^2 \times n^2$ Latin square contain an $n \times n$ submatrix that contains every symbol?  The $4 \times 4$ Latin squares each have $2 \times 2$ submatrices containing all four symbols, but the $9 \times 9$ Latin square
\begin{center}
\begin{tikzpicture}
\matrix[square matrix]{
  0 & 1 & 2 & 3 & 4 & 5 & 6 & 7 & 8 \\
  1 & 0 & 3 & 2 & 5 & 6 & 7 & 8 & 4 \\
  2 & 3 & 1 & 0 & 7 & 8 & 4 & 5 & 6 \\
  3 & 2 & 0 & 1 & 6 & 7 & 8 & 4 & 5 \\
  4 & 5 & 8 & 7 & 1 & 2 & 3 & 6 & 0 \\
  5 & 6 & 4 & 8 & 0 & 1 & 2 & 3 & 7 \\
  6 & 7 & 5 & 4 & 8 & 0 & 1 & 2 & 3 \\
  7 & 8 & 6 & 5 & 3 & 4 & 0 & 1 & 2 \\
  8 & 4 & 7 & 6 & 2 & 3 & 5 & 0 & 1 \\
};
\end{tikzpicture}
\end{center}
found by White \cite{White}, has the property that no $3 \times 3$ submatrix contains all nine symbols. It would be of some interest to find more precise results for general Latin squares as to how small a submatrix contains every symbol, and/or how many distinct symbols we can be sure to find in at least one submatrix of given dimensions.

There are multiple directions in which the study of covers could be extended; we describe some below.

Some of the results here could be extended to Latin rectangles or even special kinds of partial Latin rectangles such as plexes \cite{transurv}.  It would also be interesting to extend the investigation to Latin hypercubes, sets of mutually orthogonal Latin squares, or to MDS codes more generally.

The Cayley tables of groups are of particular interest, since transversals in them are equivalent to orthomorphisms, and problems such as enumeration of orthomorphisms (particularly for cyclic groups) have been studied \cite{MMW2006}.  Moreover, cyclic group tables have a lot of structure (see, e.g. \lref{l:grpG1-5}) that may permit a more successful study of switchings than in general Latin squares.

% Switching algorithms have been used to extend partial transversals have been studied, and it would be interesting to see if they can be used analogously for covers.

Each of the five structurally distinct $(n+1)$-covers can be embedded in a Latin square of order $5$, as shown in \fref{fig:subgraphs}, so by replacing the $5\times 5$ subsquares in the $k=5$ case of \tref{th:parttrans_subsq}, we find that every potential $(n+1)$-cover embeds in a Latin square of order $n$, for all $n \geq 10$.  In fact, the same is easily found to be true for orders in $\{5,\ldots,9\}$ (by searching random Latin squares of these orders).  It would be interesting to resolve the general case of this embedding problem, i.e., for which orders $n$ does every potential $(n+a)$-cover complete to a Latin square?  A famous problem along these lines is Evan's Conjecture \cite{Evans}, which has since been proved \cite{AndersonHilton,Haggkvist,Smetaniuk}, which states that a partial Latin square of order $n$ with at most $n-1$ entries can be completed.

Balasubramanian \cite{Balasubramanian1990} showed that Latin squares of even order have an even number of transversals. Exhaustive computations for orders $n\le8$ suggest the following:

\begin{conj}
Let $L$ be a Latin square of even order $n$, with $t$ transversals and $\qmin$ minimal $(n+1)$-covers. Then $t\equiv2\qmin\mod4$.
\end{conj}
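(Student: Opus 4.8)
We believe the conjecture can be attacked in two stages: first an algebraic reduction using the identities of \sref{s:duality}, and then a combinatorial parity argument.

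For the reduction, recall from \eqref{eq:pmax} that $3(n-1)\pmax = 2q_2 + 3q_3 + 3q_5$, where $\pmax$, $q_2$, $q_3$, $q_5$ refer to $L$. Since Akbari and Alipour \cite{AkbariAlipour} proved $\pmax \equiv 0 \pmod 4$, we get $2q_2 + 3q_3 + 3q_5 \equiv 0 \pmod 4$, and as $3 \equiv -1 \pmod 4$ this says $q_3 + q_5 \equiv 2q_2 \pmod 4$. Substituting into $2\qmin = 2q_1 + 2q_2 + 2(q_3+q_5)$ and reducing modulo $4$ gives
\[ 2\qmin \equiv 2(q_1 + q_2) \pmod 4 . \]
Now suppose $n$ is even. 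By \cite{Balasubramanian1990} the number $t$ of transversals is even, so $q_4 = n(n-1)t$ is divisible by $4$, whence the total number $q$ of $(n+1)$-covers satisfies $q = \qmin + q_4 \equiv \qmin \pmod 4$. Therefore the conjecture is equivalent to either of
\[ t \equiv 2(q_1 + q_2) \pmod 4 \qquad\text{and}\qquad t \equiv 2q \pmod 4 ; \]
the second form says that the number of $(n+1)$-covers of $L$ is congruent to $t/2$ modulo $2$.

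It remains to prove the residual congruence $t \equiv 2(q_1+q_2) \pmod 4$, which (since $t$ is even) is the parity statement $t/2 \equiv q_1 + q_2 \pmod 2$. The natural tool is an involution with a controllable fixed-point set. One avenue is to extend the switchings of \sref{s:conclude} (which exchange $(n+1)$-covers inducing $G_3$ and $G_5$) to an involution on the full set of $(n+1)$-covers, arranged so that the $G_2$-, $G_3$- and $G_5$-covers are matched in pairs while the $G_1$- and $G_4$-covers are essentially fixed; combined with part (1) of \tref{th:nplus1}, which organises the $G_4$-covers into $n(n-1)$-sized bundles indexed by transversals, and with the value of $q_4 = n(n-1)t$ modulo $8$ (which for $n\equiv 2\pmod 4$ already involves $t/2\bmod 2$), this would pin down $q\bmod 2$, hence $q_1+q_2\bmod 2$, in terms of $t$. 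A second avenue is to transport the involutions underlying \cite{Balasubramanian1990} and \cite{AkbariAlipour} directly to the level of $(n+1)$-covers, using the incidence identities \eqref{eq:pmax}, \eqref{eq:pmaxbound} and \tref{th:nplus1} as bookkeeping, so that their fixed-point contributions assemble into $t - 2\qmin \equiv 0 \pmod 4$.

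The main obstacle, in either approach, is the $G_1$-covers: the paper notes that switchings turning $G_1$-covers into covers of other types have resisted discovery and appear entangled with Brualdi's conjecture, yet $q_1$ enters the residual congruence with a nonzero coefficient, so some control of $q_1 \bmod 2$ seems unavoidable. A sensible interim goal is therefore to settle the conjecture when $G_1$-covers are absent or structurally understood — for instance, for Cayley tables of abelian groups with trivial or non-cyclic Sylow $2$-subgroups, where $q_2 = q_3 = q_5 = 0$ by \lref{l:grpG1-5} and the claim collapses to $t \equiv 2q_1 \pmod 4$ — and to use the exhaustive data for $n\le 8$ to test whether $q_1$ (and $q_2$, which the data suggest may always be even) already has the predicted parity for structural reasons.
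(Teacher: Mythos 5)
The statement you are addressing is not proved in the paper at all: it appears there only as a conjecture, supported by exhaustive computation for $n\le 8$, so there is no paper proof to measure you against. The question is therefore whether your argument settles it, and it does not. Your algebraic reduction is correct as far as it goes: from \eqref{eq:pmax} together with the Akbari--Alipour congruence $\pmax\equiv0\pmod 4$ you correctly deduce $q_3+q_5\equiv 2q_2\pmod 4$, hence $2\qmin\equiv 2(q_1+q_2)\pmod 4$, and for even $n$ Balasubramanian's theorem gives $q_4=n(n-1)t\equiv0\pmod 4$, so the conjecture is indeed equivalent to $t/2\equiv q_1+q_2\pmod 2$, or to $t\equiv 2q\pmod 4$. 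But note that these ingredients are exactly the remarks the paper already records after \tref{th:pmaxbound} ($2q_2\equiv q_3+q_5\pmod4$, $q_4=n(n-1)t$ with $t$ even); your contribution is to assemble them into an equivalent reformulation, which is where the actual content of the conjecture begins, not where it ends.

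The genuine gap is that the residual congruence $t/2\equiv q_1+q_2\pmod 2$ is left entirely to a plan. No involution on the set of $(n+1)$-covers is defined, shown to be well defined, or shown to have fixed points you can count; the $G_3\leftrightarrow G_5$ switching exhibited in the concluding section is a single local move, and nothing in your sketch establishes that it (or any extension of it) pairs up the $G_2$-, $G_3$- and $G_5$-covers coherently, let alone that it yields information about $q_1\bmod 2$. You rightly identify $q_1$ as the obstruction, but no existing result gives a handle on it: \lref{l:grpG1-5} controls $q_2,q_3,q_5$ (or $q_4$) only for special group tables and says nothing about $q_1$, and the data in \tabref{tab:Zn-q_i} show $q_1$ fluctuating with no evident structural explanation of its parity. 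Even your proposed interim target is open: for even $n$ the Sylow $2$-subgroup is automatically nontrivial, so the abelian case you single out reduces to non-cyclic Sylow $2$-subgroups, where the claim $t\equiv 2q_1\pmod 4$ is still unproven (and in the cyclic case, where $t=0$, one would still need $\qmin$ even, which is also unestablished). So what you have is a correct equivalent restatement plus a research programme, not a proof.
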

%, and it would be interesting to know if this holds for $(n+1)$-covers.  There are Latin squares of order $7$ with an odd number of $(n+1)$-covers, but no Latin square of order $2$, $4$ or $6$ has an odd number of $(n+1)$-covers.  
Another curious observation is that the number of $(n+1)$-covers in every Latin square of order $7$ is divisible by $3$.

Finally, we mention that the data in \tabref{tab:avg-q_i} shows approximate consistency in the number of $(n+1)$-covers that Latin squares of order $n$ have. If this is a pattern, it might be worth investigating as a means to prove a weakened form of Brualdi's conjecture (via \tref{th:CovertoPT}).

% In \sref{s:duality} we showed a relationship between the longest length of a partial transversal and the smallest size of a cover. In \sref{s:maxmincov} we found that all Latin squares have asymptotically the same size for their largest minimal cover. However, there was distinct variation in what might have been expected to be the dual quantity. For any Latin square $L$ of order $n$, let $\tau(L)$ denote the minimum length of a maximal partial transversal in $L$.
% We showed in \tref{t:minmaxpt} that $\tau(L)\ge \lceil n/2\rceil$ and that this bound is achieved for all $n\ge5$. However, Latin squares typically do not come close to achieving this bound (\tref{t:RLSminmaxpt}). An the other end of the spectrum, it is an open problem to find a non-trivial upper bound on $\tau(L)$.

\appendix
\newpage
\section{Sporadic example}
\begin{figure}[!htp]
\centering
\includegraphics[width=\textwidth]{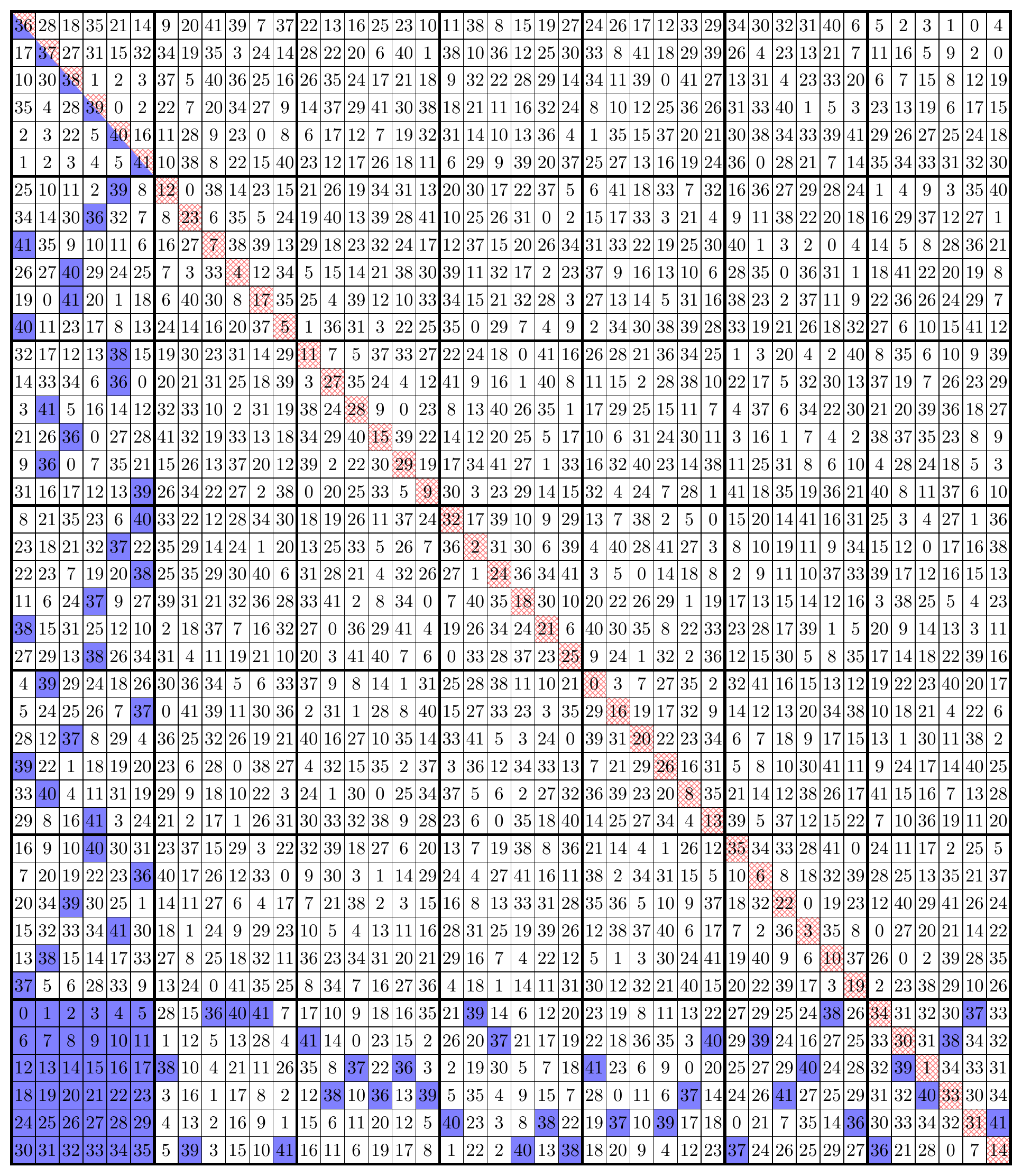}
\caption{\label{fi:t6}A Latin square of order $42$ and a minimal $108$-cover generated by a semi-random computer search.  The main diagonal is a transversal.}
\end{figure}

\end{document}